\newtheorem{thm}{Theorem}[section]
\newtheorem{cor}[thm]{Corollary}
\newtheorem{lem}[thm]{Lemma}
\theoremstyle{definition}
\newtheorem{defn}[thm]{Definition}
\theoremstyle{remark}
\newtheorem{rem}[thm]{Remark}
\begin{document}

\title{Multiple Lattice Packings and Coverings of the Plane with Triangles}

\author{Kirati Sriamorn}

\maketitle

\begin{abstract}
  Given a convex disk $K$ and a positive integer $j$, let $\delta_L^j(K)$ and $\vartheta_L^j(K)$ denote the $j$-fold lattice packing density and the $j$-fold lattice covering density of $K$, respectively. I will prove that for every triangle $T$ we have that $\delta_L^j(T)=\frac{2j^2}{2j+1}$ and $\vartheta_L^j(T)=\frac{2j+1}{2}$. Furthermore, I also obtain that the numbers of lattices which attain these densities both are $(2j+1)\prod_{p|2j+1}\left(1-\frac{2}{p}\right)$, where the product is over the distinct prime numbers dividing $2j+1$.
\end{abstract}

\section{Introduction}

Let $S$ be a subset of $\mathbb{R}^2$. The measure of $S$ will be denoted by $|S|$. The closure and the interior of $S$ are denoted by $\overline{S}$ and $Int(S)$, respectively. The cardinality of $S$ is denoted by $card\{S\}$.

Let $D$ be a measurable connected subset in $\mathbb{R}^2$. The \emph{upper} and \emph{lower density} of a family $\mathcal{F}=\{K_1,K_2,\ldots\}$ of measurable bounded sets with respect to $D$ are defined as
$$d_{+}(\mathcal{F},D)=\frac{1}{|D|}\sum_{K\in\mathcal{F}, K\cap D\neq\emptyset}|K|,$$
and
$$d_{-}(\mathcal{F},D)=\frac{1}{|D|}\sum_{K\in\mathcal{F}, K\subset D}|K|.$$
We define the \emph{upper} and \emph{lower density} of the family $\mathcal{F}$ by
$$d_{+}(\mathcal{F})=\limsup_{l\rightarrow \infty} d_{+}(\mathcal{F},lI^2),$$
and
$$d_{-}(\mathcal{F})=\liminf_{l\rightarrow \infty} d_{-}(\mathcal{F},lI^2),$$
where $I=[-1,1]$.

A family of measurable bounded sets $\mathcal{F}=\{K_1,K_2,\ldots\}$ is said to be a \emph{j-fold packing} of a connected set $D$ provided $\bigcup_i{K_i}\subset D$ and each point of $D$ belongs to the interiors of at most $j$ sets of the family. In particular, if $D$ is the whole plane $\mathbb{R}^2$, then when all $K_i$ are congruent to a fixed measurable bounded set $K$ the corresponding family is called a \emph{j-fold congruent packing} of $\mathbb{R}^2$ with $K$, when all $K_i$ are translates of $K$ the corresponding family is called a \emph{j-fold translative packing} of $\mathbb{R}^2$ with $K$, and when the translative vectors form a lattice the corresponding family is called a \emph{j-fold lattice packing} of $\mathbb{R}^2$ with $K$. We define
\begin{equation*}
\delta^j(K)=\sup_{\mathcal{F}}d_{+}(\mathcal{F}),
\end{equation*}
the supremum being taken over all $j$ fold congruent packings $\mathcal{F}$ of $\mathbb{R}^2$ with $K$.
Similarly, we can also define $\delta_T^j(K)$ and $\delta_L^j(K)$ for the $j$-fold translative packings and the $j$-fold lattice packings, respectively. Obviously, we have
\begin{equation}\label{delta_lattice_density}
\delta_L^j(K)=\max_{\Lambda}\frac{|K|}{d(\Lambda)},
\end{equation}
the maximum is over all lattices $\Lambda$ such that $K+\Lambda$ is a $j$-fold lattice packing of $\mathbb{R}^2$.

As a counterpart to a $j$-fold packing, a family of measurable bounded sets $\mathcal{F}=\{K_1,K_2,\ldots\}$ is said to be a \emph{j-fold covering} of a connected set $D$ if each point of $D$ belongs to at least $j$ sets of the family. Similar to the case of the packing, for a fixed measurable bounded set $K$ we can define a \emph{j-fold congruent covering}, a \emph{j-fold translative covering} and a \emph{j-fold lattice covering} of $\mathbb{R}^2$ with $K$. We define
\begin{equation*}
\vartheta^j(K)=\inf_{\mathcal{F}} d_{-}(\mathcal{F}),
\end{equation*}
the infimum being taken over all $j$-fold congruent coverings $\mathcal{F}$ of $\mathbb{R}^2$ with $K$.
Similarly, we can define $\vartheta_T^j(K)$ and $\vartheta_L^j(K)$ for the $j$-fold translative coverings and the $j$-fold lattice coverings, respectively. Clearly, we have
\begin{equation}\label{theta_lattice_density}
\vartheta_L^j(K)=\min_{\Lambda}\frac{|K|}{d(\Lambda)},
\end{equation}
the minimum is over all lattices $\Lambda$ such that $K+\Lambda$ is a $j$-fold lattice covering of $\mathbb{R}^2$.

A family $\mathcal{F}=\{K_1,K_2,\ldots\}$ of bounded sets which is both a $j$-fold packing and a $j$-fold covering of $\mathbb{R}^2$ is called a \emph{j-fold tiling} of $\mathbb{R}^2$. In addition, if each point of $\mathbb{R}^2$ belongs to exactly $j$ sets of the family, then we call $\mathcal{F}$ an \emph{exact j-fold tiling} of $\mathbb{R}^2$. For a fixed measurable bounded set $K$, we can define a \emph{j-fold congruent tiling}, a \emph{j-fold translative tiling}, a \emph{j-fold lattice tiling}, an \emph{exact j-fold congruent tiling}, an \emph{exact j-fold translative tiling}, and an \emph{exact j-fold lattice tiling} of $\mathbb{R}^2$ with $K$. We call a bounded set $K$ a \emph{j-fold tile} if there exists a $j$-fold lattice tiling of $\mathbb{R}^2$ with $K$, and call $K$ an \emph{exact j-fold tile} if there exists an exact $j$-fold lattice tiling of $\mathbb{R}^2$ with $K$.

\begin{rem}
A $1$-fold covering, a $1$-fold packing and a $1$-fold tiling are simply called a \emph{covering}, a \emph{packing} and a \emph{tiling}, respectively.
\end{rem}

It follows from the definitions that
\begin{equation*}
j\delta_L(K)\leq\delta_L^j(K)\leq\delta_T^j(K)\leq\delta^j(K)\leq j
\leq\vartheta^j(K)\leq\vartheta_T^j(K)\leq\vartheta_L^j(K)\leq j\vartheta_L(K)
\end{equation*}
where $\delta_L(K)=\delta_L^1(K)$ and $\vartheta_L(K)=\vartheta_L^1(K)$. In addition, it is easy to see that $\delta_T^j(K)$, $\delta_L^j(K)$, $\vartheta_T^j(K)$ and $\vartheta_L^j(K)$ are invariant under non-singular affine transformations.

In 1972, Dumir and Hans-Gill \cite{dumir1}\cite{dumir2} proved that both $\delta_L^2(C)=2\delta_L(C)$ and $\vartheta_L^2(C)=2\vartheta_L(C)$ hold for every centrally symmetric convex disk. Later, J. Pach introduced an idea to decompose complicated multiple packings and coverings to simpler ones. In 1984, G. Fejes T\'{o}th \cite{fejes} showed that every $3$-fold lattice packing can be decomposed into three simple lattice packings and every $4$-fold lattice packing can be decomposed into two $2$-fold lattice packings. Furthermore, he obtained $\delta_L^3(C)=3\delta_L(C)$ and $\delta_L^4(C)=4\delta_L(C)$.

As a special case, one can determine the $j$-fold lattice packing density and the $j$-fold lattice covering density of $B^2$, where $B^2$ is the unit ball in $\mathbb{R}^2$, centered at the origin. The known results about $\delta_L^j(B^2)$ and $\vartheta_L^j(B^2)$ can be summarized in the following table \cite{chuanming}.

\begin{table}[htbp]
\centering
\begin{tabular}{|c|c|c|c|c|}
  \hline
  $j$ & $\delta_L^j(B^2)$ & \textbf{Author} & $\vartheta_L^j(B^2)$ & \textbf{Author}\\
  \hline
  $1$ & $\frac{\pi}{\sqrt{12}}$ & Lagrange & $\frac{2\pi}{\sqrt{27}}$ & Kershner\\
  \hline
  $2$ & $\frac{\pi}{\sqrt{3}}$ & Heppes & $\frac{4\pi}{\sqrt{27}}$ & Blundon\\
  \hline
  $3$ & $\frac{3\pi}{2}$ & Heppes & $\frac{\pi\sqrt{27138+2910\sqrt{97}}}{216}$ & Blundon\\
  \hline
  $4$ & $\frac{2\pi}{\sqrt{3}}$ & Heppes & $\frac{25\pi}{18}$ & Blundon\\
  \hline
  $5$ & $\frac{2\pi}{\sqrt{7}}$ & Blundon & $\frac{32}{7\sqrt{7}}$ & Subak\\
  \hline
  $6$ & $\frac{35\pi}{8\sqrt{6}}$ & Blundon & $\frac{98}{27\sqrt{3}}$ & Subak\\
  \hline
  $7$ & $\frac{8\pi}{\sqrt{15}}$ & Bolle & $7.672\cdots$ & Haas\\
  \hline
  $8$ & $\frac{3969\pi}{4\sqrt{(220-2\sqrt{193})(449+32\sqrt{193})}}$ & Yakovlev & $\frac{32}{3\sqrt{15}}$ & Temesv\'{a}ri\\
  \hline
  $9$ & $\frac{25\pi}{2\sqrt{21}}$ & Temesv\'{a}ri &  & \\
  \hline

\end{tabular}
\end{table}

In this paper, I will determine the $j$-fold lattice packing density and the $j$-fold lattice covering density of a triangle $T$. The main results are as follows

\begin{thm}\label{main1}
For every triangle $T$ and positive integer $j$,
\begin{equation}
 \delta_L^j(T)=\frac{2j^2}{2j+1}
\end{equation}
and
\begin{equation}
\vartheta_L^j(T)=\frac{2j+1}{2}.
\end{equation}
\end{thm}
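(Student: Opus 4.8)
The plan is to convert both equalities into statements about how many points of a lattice can lie inside a translate of a fixed triangle, dispose of the two ``easy'' inequalities by an explicit family of lattices, and spend the bulk of the work on the two matching ``hard'' bounds.

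First I would use the affine invariance of $\delta_L^j$ and $\vartheta_L^j$ to replace $T$ by the right triangle $T_a$ with vertices $(0,0)$, $(a,0)$, $(0,a)$, so that $|T_a|=a^2/2$; choosing $a=2j$ for the packing statement and $a=2j+1$ for the covering statement makes the target determinant equal to $2j+1$ in both cases. The elementary point is that a finite set $S\subset\mathbb{R}^2$ lies in some translate of $Int(-T_a)$ if and only if $\max_{P\in S}x_P+\max_{P\in S}y_P-\min_{P\in S}(x_P+y_P)<a$. Combining this with \eqref{delta_lattice_density} and \eqref{theta_lattice_density} reduces Theorem~\ref{main1} to two purely lattice-theoretic assertions: (i) the least determinant of a lattice $\Lambda$ for which no $(j+1)$-element subset $S\subseteq\Lambda$ satisfies that strict inequality (with $a=2j$) equals $2j+1$; and (ii) the greatest determinant of a lattice $\Lambda$ such that every translate of $T_{2j+1}$ contains at least $j$ points of $\Lambda$ equals $2j+1$.

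For the constructions I would introduce, for each residue $c\pmod{2j+1}$ with $\gcd(c,2j+1)=\gcd(c+1,2j+1)=1$, the lattice $\Lambda_c=\mathbb{Z}(2j+1,0)+\mathbb{Z}(c,1)=\{(x,y)\in\mathbb{Z}^2:\ x\equiv cy\pmod{2j+1}\}$, which has $d(\Lambda_c)=2j+1$. I would then check directly that $T_{2j}+\Lambda_c$ is a $j$-fold packing and $T_{2j+1}+\Lambda_c$ is a $j$-fold covering. For the packing: if $j+1$ points of $\Lambda_c$ satisfied the strict inequality with $a=2j$, their pairwise differences $(d_x,d_y)$ would satisfy $|d_x|,|d_y|,|d_x+d_y|\le 2j-1<2j+1$ together with $d_x\equiv cd_y\pmod{2j+1}$, and a short pigeonhole argument on the residues still available then forces two of the points to coincide; the covering claim is handled dually, by ruling out a translate of $T_{2j+1}$ missing $j$ points of $\Lambda_c$. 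This yields $\delta_L^j(T)\ge\frac{2j^2}{2j+1}$ and $\vartheta_L^j(T)\le\frac{2j+1}{2}$ and, via CRT and inclusion--exclusion over the primes $p\mid 2j+1$ (for each such $p$ the two forbidden residues being $0$ and $-1$), produces exactly $(2j+1)\prod_{p\mid 2j+1}(1-\tfrac{2}{p})$ distinct lattices attaining these bounds.

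The hard part is the reverse inequalities, and the main obstacle is that volume/averaging is too weak: the average number of points of $\Lambda$ in a random translate of $T_a$ is $|T_a|/d(\Lambda)$, so a $j$-fold packing condition alone only gives $d(\Lambda)\ge|T_a|/j$, short of $|T_a|\cdot\frac{2j+1}{2j^2}$, and one must exploit the triangular shape combinatorially. I would first use a compactness argument (Mahler's selection theorem, legitimate because the packing property is a closed condition and forces a positive lower bound on the length of the shortest nonzero vector) to reduce to a lattice $\Lambda^{\ast}$ of minimal determinant among those with the property, and observe that $\Lambda^{\ast}$ is critical: some $(j+1)$-subset realizes equality $\max x+\max y-\min(x+y)=a$, since otherwise $(1-\varepsilon)\Lambda^{\ast}$ would still work. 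I would then analyse $\Lambda^{\ast}$ through the three pencils of lines parallel to the sides of $T_a$ that meet $\Lambda^{\ast}$, with spacings $\gamma_1,\gamma_2,\gamma_3$ and ``column/row/diagonal'' heights $h_1,h_2,h_3$ satisfying $d(\Lambda^{\ast})=\gamma_ih_i$, and apply the packing condition to explicit configurations of lattice points---long collinear runs in a side direction, and ``staircase'' configurations straddling two consecutive columns---to pin these parameters down, forcing $d(\Lambda^{\ast})=2j+1$ and $\Lambda^{\ast}$ to be one of the $\Lambda_c$ above. The covering bound $\vartheta_L^j(T)\ge\frac{2j+1}{2}$ comes from the dual analysis of lattices all of whose translates of $T_{2j+1}$ contain at least $j$ points. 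Together with the construction this establishes both equalities and identifies the extremal lattices as precisely the $\Lambda_c$, giving the count $(2j+1)\prod_{p\mid 2j+1}(1-\tfrac{2}{p})$ in each case.
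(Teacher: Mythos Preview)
Your reformulation and your construction are sound and in fact match the paper: your lattices $\Lambda_c$ are, after the obvious rescaling, exactly the paper's $\Lambda(m,j)$ generated by $(1,m)$ and $(0,2j+1)$, and the paper proves the $j$-fold tiling property of the staircase $S(j)$ with respect to these lattices by the same residue-class count you sketch. So the inequalities $\delta_L^j(T)\ge \frac{2j^2}{2j+1}$ and $\vartheta_L^j(T)\le\frac{2j+1}{2}$, together with the enumeration of optimal lattices, are handled essentially identically in both approaches.

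The gap is in the reverse inequalities. Your plan there is to pass to a critical lattice by Mahler compactness, look at the three pencils of lines parallel to the sides with spacings $\gamma_i$ and heights $h_i$, and then ``apply the packing condition to explicit configurations \ldots\ to pin these parameters down, forcing $d(\Lambda^\ast)=2j+1$.'' But this last sentence is the entire content of the hard bound, and you have not indicated any mechanism by which the packing constraint on $(j{+}1)$-tuples translates into the inequality $\gamma_i h_i\ge 2j+1$. Collinear runs and two-column staircases only give relations of the type $j\gamma_i\le a$ or $\gamma_i+\gamma_k\ge$ something, and chaining such inequalities does not by itself force the determinant up to $2j+1$; the averaging gap you correctly diagnose (area gives only $d(\Lambda)\ge a^2/(2j)$) is exactly what your outline has not closed.

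The paper closes it by a device you are missing. For a given lattice $\Lambda$ it builds a set $S_j(\Lambda)$ by taking, for each coset $u+\Lambda$, the $j$ smallest representatives in the covering triangle $T_j(\Lambda)$ under the order $\prec$ determined by $x+y$ and then $x$. Two structural facts do all the work: $S_j(\Lambda)+\Lambda$ is an \emph{exact} $j$-fold tiling (so $|S_j(\Lambda)|=j\,d(\Lambda)$), and $S_j(\Lambda)$ is a half-open stair polygon with at most $2j-1$ steps. With this in hand, the hard inequality becomes the elementary extremal problem ``among $(\le 2j{-}1)$-step stair polygons containing $Int(T)$ (resp.\ contained in $T$), minimize (resp.\ maximize) the area,'' whose optimum is $\tfrac{2j+1}{4j}$ (resp.\ $\tfrac{j}{2j+1}$), attained uniquely by the scaled staircase $\tfrac{1}{2j}S(j)$ (resp.\ $\tfrac{1}{2j+1}S(j)$). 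That optimization is a one-line Lagrange/AM--GM computation; the substance lies in proving that $S_j(\Lambda)$ is a stair polygon and bounding its number of steps by $2j-1$, which the paper does via Lemmas~2.10--2.13. Your pencil parameters $\gamma_i,h_i$ are shadows of this staircase, but without the exact-tiling identity $|S_j(\Lambda)|=j\,d(\Lambda)$ and the step bound there is no leverage to convert local configuration constraints into the global determinant bound.
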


Denote by $\Delta_L^j(K)$ the collection of lattices $\Lambda$ which $K+\Lambda$ is a $j$-fold lattice packing of $\mathbb{R}^2$ and the density of $K+\Lambda$ is equal to $\delta_L^j(K)$. Denote by $\Theta_L^j(K)$ the collection of lattices $\Lambda$ which $K+\Lambda$ is a $j$-fold lattice covering of $\mathbb{R}^2$ and the density of $K+\Lambda$ is equal to $\vartheta_L^j(K)$.

\begin{thm}\label{main_delta}
Suppose that $T$ is the triangle of vertices $(0,0)$, $(1,0))$ and $(0,1)$. We have that a lattice $\Lambda$ is in $\Delta_L^j(T)$ if and only if there exists an integer $m$ such that $1\leq m\leq 2j+1$, $\gcd(m,2j+1)=gcd(m+1,2j+1)=1$ and $\Lambda$ is generated by $\left(\frac{1}{2j},\frac{m}{2j}\right)$ and $\left(0,\frac{2j+1}{2j}\right)$.
\end{thm}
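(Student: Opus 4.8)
The plan is to recast the assertion as a statement about lattice points in triangles and then prove the two implications, the second being the substantial one.

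By \eqref{delta_lattice_density} and Theorem~\ref{main1}, we have $\Lambda\in\Delta_L^j(T)$ exactly when $T+\Lambda$ is a $j$-fold packing and $d(\Lambda)=|T|/\delta_L^j(T)=\frac{2j+1}{4j^2}$. Rescaling by $2j$, put $M=2j+1$, $\Lambda'=2j\Lambda$, and $T'=2jT$, the right isosceles triangle with legs of length $2j=M-1$; then $d(\Lambda')=M$, and a point lies in more than $j$ of the open triangles $Int(T')+\lambda$ iff the corresponding translate of $-Int(T')$ contains more than $j$ points of $\Lambda'$. Since $v\mapsto-v$ carries $-T'$ onto $T'$ and fixes $\Lambda'$, the $j$-fold packing condition becomes: \emph{every translate of $Int(T')$ contains at most $j$ points of $\Lambda'$}. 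The rescaled candidate lattice is $\Lambda'_m=\{(a,c)\in\mathbb Z^2:c\equiv ma\pmod M\}$, the index-$M$ sublattice of $\mathbb Z^2$; it depends only on $m\bmod M$ and $m\mapsto\Lambda'_m$ is injective on residues modulo $M$. So the theorem reduces to: a lattice $\Lambda'$ of determinant $M$ has the packing property above iff $\Lambda'=\Lambda'_m$ with $\gcd(m,M)=\gcd(m+1,M)=1$, whence the number of such lattices is $\#\{m\bmod M:\gcd(m,M)=\gcd(m+1,M)=1\}=M\prod_{p\mid M}(1-\frac{2}{p})$ by the Chinese Remainder Theorem, as in the abstract.

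For the ``if'' direction, assume $\gcd(m,M)=\gcd(m+1,M)=1$. The three sides of $T'$ have directions $(1,0)$, $(0,1)$, $(1,-1)$, and the gcd hypotheses say precisely that the primitive vectors of $\Lambda'_m$ in these directions are $(M,0)$, $(0,M)$, $(M,-M)$: along each side-direction the induced one-dimensional lattice has period $M$, one unit longer than the corresponding side. Now fix a translate $R=\{x>\alpha,\,y>\beta,\,x+y<\gamma\}$ of $Int(T')$, so $\gamma-\alpha-\beta=M-1$. For each integer $k$ with $\alpha+\beta<k<\gamma$, the chord $x+y=k$ of $R$ has its $x$-coordinate running over an open interval of length $k-\alpha-\beta<M$, and $(a,k-a)\in\Lambda'_m$ forces $k\equiv(m+1)a\pmod M$, which fixes $a$ modulo $M$ (using $\gcd(m+1,M)=1$); hence each such chord carries at most one point of $\Lambda'_m$ in $R$. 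Running the identical count along the other two families of side-parallel chords and combining the three estimates — here one uses that $R$ is a triangle, not a parallelogram, so the three chord-families cannot all be long — gives $|R\cap\Lambda'_m|\le j$. Since $d(\Lambda'_m)=M$ and $|T'|=2j^2$, the density is $\frac{2j^2}{M}=\delta_L^j(T)$, so $\Lambda\in\Delta_L^j(T)$.

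For the ``only if'' direction, let $\Lambda'$ have determinant $M$ and satisfy the packing property. The first step is to show that $\Lambda'$ must contain a primitive vector parallel to one side of $T'$, say $(w,0)$ after relabelling; this is forced by the same counting that yields the bound $\delta_L^j(T)\le\frac{2j^2}{2j+1}$ in Theorem~\ref{main1} — a lattice with no side-parallel vector leaves enough room that some translate of $Int(T')$ swallows more than $j$ of its points. With $(w,0)$ primitive, the horizontal lattice lines are spaced $h=M/w$, and evaluating $|R\cap\Lambda'|$ for $R$ placed with one leg along $(w,0)$ — row $i$ from the bottom contributing essentially $(2j-(i-1)h)/w$ points, a continuous main term $\frac{2j^2}{wh}=\frac{2j^2}{M}<j$ — shows that the sharp bound $|R\cap\Lambda'|\le j$, once the integer-rounding corrections are accounted for, can hold only for one value of $(w,h)$ together with a restricted set of horizontal shears; this value is $(w,h)=(M,1)$ and the admissible shears are the units $m^{-1}\bmod M$, so $\Lambda'=\Lambda'_m$ with $\gcd(m,M)=1$. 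Imposing the same analysis along the remaining two side-directions leaves only those $m$ for which the diagonal count also stays $\le j$, namely $\gcd(m+1,M)=1$. The main obstacle is exactly this rigidity step: proving that equality in the $j$-fold packing bound is attained only by the lattices $\Lambda'_m$, which requires a careful, essentially finite, analysis of the rounding in the lattice-point count in each of the three directions.
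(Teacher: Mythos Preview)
Your proposal has genuine gaps in both directions.

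In the ``if'' direction, the chord argument shows that each diagonal line $x+y=k$ meets $R\cap\Lambda'_m$ in at most one point, but the open interval $(\alpha+\beta,\gamma)$ has length $M-1=2j$ and so can contain up to $2j$ integers $k$; the bound you obtain is $|R\cap\Lambda'_m|\le 2j$, not $\le j$. The sentence ``combining the three estimates\ldots gives $|R\cap\Lambda'_m|\le j$'' is not an argument: three separate upper bounds of order $2j$ do not combine to a bound of $j$, and you give no mechanism for the combination. The paper proves this direction by a different route: it shows (Lemma~\ref{equal_to_j_on_real_points}) that $S(j)+\Lambda(m,j)$ is an exact $j$-fold tiling, and since $Int(2jT)\subset S(j)$, the packing condition follows immediately.

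In the ``only if'' direction you explicitly leave the main step unproved: the claim that an optimal lattice must contain a side-parallel vector, and then that the ``integer-rounding corrections'' force $(w,h)=(M,1)$ and the specific shears, is precisely the rigidity content of the theorem, and you supply no proof. The paper's method is structurally different and avoids any lattice-point counting in triangles: Section~2 associates to every lattice $\Lambda$ a half-open stair polygon $S_j(\Lambda)$ with at most $2j-1$ steps, area $j\cdot d(\Lambda)$, containing $Int(T^j(\Lambda))$, such that $S_j(\Lambda)+\Lambda$ is an exact $j$-fold tiling. Lemma~\ref{A_j_up_no_need_tile} then shows by elementary optimisation that among all half-open $r$-stair polygons ($r\le 2j-1$) containing $Int(T)$, the minimum area $\frac{2j+1}{4j}$ is attained \emph{only} by $\frac{1}{2j}S(j)$. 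Hence any optimal $\Lambda$ has $S_j(\Lambda)=\frac{1}{2j}S(j)$, and Theorem~\ref{optimal_lattice_of_S_j} classifies the lattices for which $S(j)$ gives an exact $j$-fold tiling. The rigidity thus comes from a purely geometric uniqueness statement about stair polygons inscribed around a triangle, not from an analysis of rounding errors in a lattice-point count.
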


\begin{thm}\label{main_theta}
Suppose that $T$ is the triangle of vertices $(0,0)$, $(1,0))$ and $(0,1)$. We have that a lattice $\Lambda$ is in $\Theta_L^j(T)$ if and only if there exists an integer $m$ such that $1\leq m\leq 2j+1$, $\gcd(m,2j+1)=gcd(m+1,2j+1)=1$ and $\Lambda$ is generated by $\left(\frac{1}{2j+1},\frac{m}{2j+1}\right)$ and $\left(0,1\right)$.
\end{thm}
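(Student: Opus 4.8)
The plan is to reduce to a determinant condition and then treat the two implications in turn, with Theorem \ref{main_delta} kept in reserve as a possible shortcut. By \eqref{theta_lattice_density} and Theorem \ref{main1}, and since $|T|=\frac12$ and $\vartheta_L^j(T)=\frac{2j+1}{2}$, a lattice $\Lambda$ lies in $\Theta_L^j(T)$ exactly when $T+\Lambda$ is a $j$-fold lattice covering of $\mathbb{R}^2$ and $d(\Lambda)=\frac{1}{2j+1}$. Write $\Lambda_m$ for the lattice generated by $\left(\frac{1}{2j+1},\frac{m}{2j+1}\right)$ and $(0,1)$; a one-line computation gives $d(\Lambda_m)=\frac{1}{2j+1}$. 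So the \emph{if} part reduces to showing that $T+\Lambda_m$ is a $j$-fold covering whenever $\gcd(m,2j+1)=\gcd(m+1,2j+1)=1$, and the \emph{only if} part to showing that any $\Lambda$ of determinant $\frac{1}{2j+1}$ with $T+\Lambda$ a $j$-fold covering equals some such $\Lambda_m$. Throughout I use that the number of $\lambda\in\Lambda$ whose translate $T+\lambda$ covers a point $p$ is $\#\bigl(\Lambda\cap(p-T)\bigr)$, so that $T+\Lambda$ is a $j$-fold covering if and only if every translate of $-T$ contains at least $j$ points of $\Lambda$.

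For the \emph{if} direction, note $(2j+1)\left(\frac{1}{2j+1},\frac{m}{2j+1}\right)=(1,m)$, so $\mathbb{Z}^2\subseteq\Lambda_m$ with index $2j+1$ and $\Lambda_m/\mathbb{Z}^2$ is cyclic, generated by $\frac{1}{2j+1}(1,m)+\mathbb{Z}^2$. Passing to the torus $\mathbb{R}^2/\mathbb{Z}^2$, the family $T+\Lambda_m$ becomes the $2j+1$ translates $T+\frac{k}{2j+1}(1,m)$, $0\le k\le 2j$, which must cover the torus at least $j$ times. Since $T+\mathbb{Z}^2$ misses exactly the points $(x,y)$ with $\{x\}+\{y\}>1$ (where $\{\cdot\}$ denotes fractional part), the number of these translates failing to cover $p$ is $\#\{\,k:\{p_1-\frac{k}{2j+1}\}+\{p_2-\frac{km}{2j+1}\}>1\,\}$, and we must show this is at most $j+1$ for every $p$. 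A convenient way to structure the finite check is via the duality from $[0,1]^2=T\cup\bigl((1,1)-T\bigr)$, the two triangles overlapping only along a segment: since $\mathbb{Z}^2\subseteq\Lambda_m$ has index $2j+1$ and $[0,1]^2+\mathbb{Z}^2$ tiles $\mathbb{R}^2$, the family $[0,1]^2+\Lambda_m$ is an exact $(2j+1)$-fold tiling, whence the multiplicities of $T+\Lambda_m$ at $p$ and at $(1,1)-p$ sum to $2j+1$ almost everywhere. Therefore $T+\Lambda_m$ is a $j$-fold covering if and only if it is a $(j+1)$-fold packing, i.e.\ no $j+2$ of the $2j+1$ triangles share an interior point. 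In the resulting computation the two congruences are exactly what is needed: the three edges of $T$ correspond to the linear forms $x$, $y$, $x+y$, whose values on the generating coset $\frac{1}{2j+1}(1,m)$ run through every residue modulo $2j+1$ precisely when $\gcd(1,2j+1)=1$ (automatic), $\gcd(m,2j+1)=1$, and $\gcd(m+1,2j+1)=1$, and this is what rules out $j+2$ triangles clustering at one point.

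For the \emph{only if} direction, let $d(\Lambda)=\frac{1}{2j+1}$ with $T+\Lambda$ a $j$-fold covering; I must show $\Lambda=\Lambda_m$ for an admissible $m$. The heart of the matter, and the step I expect to be hardest, is the rigidity assertion that minimality of the covering density forces $\mathbb{Z}^2\subseteq\Lambda\subseteq\frac{1}{2j+1}\mathbb{Z}^2$, each inclusion of index $2j+1$ — equivalently, that each of the forms $x$, $y$, $x+y$ maps $\Lambda$ onto $\frac{1}{2j+1}\mathbb{Z}$. I would extract this from the equality case of the lower bound $\vartheta_L^j(T)\ge\frac{2j+1}{2}$: for each edge $e$ of $T$, sliding a translate of $-T$ until $e$ sits just off the relevant family of lattice lines shows that in the limit the covering already uses all its available translates, which pins the spacing of $\Lambda$ in the direction normal to $e$ to be exactly $\frac{1}{2j+1}$ — a coarser spacing leaves an uncovered sliver along $e$, a finer one contradicts $d(\Lambda)=\frac{1}{2j+1}$. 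Granting this, $\Lambda/\mathbb{Z}^2$ has order $2j+1$ and $x$ carries it isomorphically onto $\mathbb{Z}/(2j+1)$, so it is cyclic, generated by the unique coset $\frac{1}{2j+1}(1,m)+\mathbb{Z}^2$; thus $\Lambda=\Lambda_m$, and the conditions that $y$ and $x+y$ also surject onto $\frac{1}{2j+1}\mathbb{Z}$ become $\gcd(m,2j+1)=1$ and $\gcd(m+1,2j+1)=1$. As an alternative to proving the rigidity directly, one may instead try to establish for lattices of determinant $\frac{1}{2j+1}$ the single equivalence that $T+\Lambda$ is a $j$-fold covering if and only if $\frac{2j}{2j+1}T+\Lambda$ is a $j$-fold packing; this identifies $\Theta_L^j(T)$ with $\frac{2j}{2j+1}\,\Delta_L^j(T)$, and the stated description then follows from Theorem \ref{main_delta} by rescaling the generators. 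In either route the rigidity is the one substantive point, and the remainder is bookkeeping with congruences.
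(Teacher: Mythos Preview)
Your approach differs markedly from the paper's, which never argues on the torus or via the $T\cup((1,1)-T)$ duality. Instead the paper builds, for \emph{any} lattice $\Lambda$, a half-open stair polygon $S_j(\Lambda)$ with at most $2j-1$ steps that exactly $j$-fold tiles with $\Lambda$ and sits inside $T_j(\Lambda)$ (Lemmas~\ref{between_T_up_and_down}--\ref{Z_not_more_that_2j_1}), shows that any such polygon inscribed in $T$ has area at most $\frac{j}{2j+1}$ with equality only for $\frac{1}{2j+1}S(j)$ (Lemma~\ref{A_j_down_tile}), and then invokes Theorem~\ref{optimal_lattice_of_S_j}, which classifies the lattices for which $S(j)$ exactly $j$-fold tiles. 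Your duality reduction in the \emph{if} direction to a $(j{+}1)$-fold packing is correct and pleasant, but the closing sentence---that the gcd conditions ``rule out $j{+}2$ triangles clustering''---is the entire content of the claim and is asserted, not proved: you have only observed that each of $x$, $y$, $x{+}y$ permutes the $2j{+}1$ cosets, not that this permutation property forces the multiplicity to lie in $\{j,j{+}1\}$. That implication is precisely what the paper spends Lemmas~\ref{B}--\ref{equal_to_j_on_real_points} establishing by an explicit count, and it is not a one-line consequence of the permutation observation.

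The \emph{only if} direction has a more serious gap. Your rigidity claim $\mathbb{Z}^2\subseteq\Lambda\subseteq\frac{1}{2j+1}\mathbb{Z}^2$ is true for the optimal lattices, but the sliding sketch does not establish it: for an arbitrary lattice of determinant $\frac{1}{2j+1}$ the images $x(\Lambda)$, $y(\Lambda)$, $(x{+}y)(\Lambda)$ need not even be discrete subgroups of $\mathbb{R}$, so there is no ``family of lattice lines'' to slide toward and no spacing to pin down. You would first need to show that $\Lambda$ contains nonzero vectors parallel to each edge of $T$, and nothing in your outline produces this. The paper extracts the corresponding integrality (Lemmas~\ref{must_cut_at_x_1_and_1_y}--\ref{to_prove_lattice_point_integer}) from the local combinatorics of how translates of $S(j)$ must meet at a corner in an exact $j$-fold tiling---a genuinely structural argument, not a soft limit. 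Your alternative route through Theorem~\ref{main_delta} does not avoid the difficulty: the proposed equivalence ``$T+\Lambda$ is a $j$-fold covering iff $\frac{2j}{2j+1}T+\Lambda$ is a $j$-fold packing, for $d(\Lambda)=\frac{1}{2j+1}$'' is itself unproved and is essentially the same rigidity statement in different clothing.
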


\begin{cor}\label{cor_number_of_optimal_lattice}
For every triangle $T$, we have
\begin{equation}
card\{\Delta_L^j(T)\}=card\{\Theta_L^j(T)\}=(2j+1)\prod_{p|2j+1}\left(1-\frac{2}{p}\right),
\end{equation}
where the product is over the distinct prime numbers dividing $2j+1$.
\end{cor}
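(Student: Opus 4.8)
The plan is to deduce the count directly from the explicit descriptions of $\Delta_L^j(T)$ and $\Theta_L^j(T)$ in Theorems~\ref{main_delta} and~\ref{main_theta}, after first reducing to the standard triangle. Let $T_0$ be the triangle with vertices $(0,0),(1,0),(0,1)$. For an arbitrary triangle $T$, fix a non-singular affine map $\phi$ with $\phi(T_0)=T$ and let $\phi_0$ be its linear part. Since a family $K+\Lambda$ is a $j$-fold lattice packing (resp.\ covering) of $\mathbb{R}^2$ precisely when $\phi(K)+\phi_0(\Lambda)$ is, and since $|\phi_0(K)|/d(\phi_0(\Lambda))=|K|/d(\Lambda)$, the assignment $\Lambda\mapsto\phi_0(\Lambda)$ restricts to a bijection $\Delta_L^j(T_0)\to\Delta_L^j(T)$, and likewise for $\Theta_L^j$. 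Hence $card\{\Delta_L^j(T)\}$ and $card\{\Theta_L^j(T)\}$ are independent of $T$, and it suffices to treat $T=T_0$.

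For $T=T_0$, write $\Lambda_m$ for the lattice generated by $\bigl(\tfrac{1}{2j},\tfrac{m}{2j}\bigr)$ and $\bigl(0,\tfrac{2j+1}{2j}\bigr)$. By Theorem~\ref{main_delta}, $\Delta_L^j(T_0)=\{\Lambda_m:1\le m\le 2j+1,\ \gcd(m,2j+1)=\gcd(m+1,2j+1)=1\}$. The step I would carry out carefully is that $m\mapsto\Lambda_m$ is injective on this index set, so that no lattice is counted twice: if $\Lambda_m=\Lambda_{m'}$, then writing $\bigl(\tfrac{1}{2j},\tfrac{m'}{2j}\bigr)$ in the given basis of $\Lambda_m$ forces (comparing first coordinates) the coefficient of $\bigl(\tfrac{1}{2j},\tfrac{m}{2j}\bigr)$ to be $1$, whence $m'\equiv m\pmod{2j+1}$, and then $m=m'$ since both lie in $\{1,\dots,2j+1\}$ (the value $m=2j+1$ being excluded anyway by $\gcd(m,2j+1)=1$). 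The covering case is identical: with $\Lambda'_m$ generated by $\bigl(\tfrac{1}{2j+1},\tfrac{m}{2j+1}\bigr)$ and $(0,1)$, the same first-coordinate argument gives injectivity. Since the residues represented by $m\in\{1,\dots,2j+1\}$ are exactly all of $\mathbb{Z}/n\mathbb{Z}$ with $n=2j+1$, each occurring once, both cardinalities equal
\begin{equation*}
N(n):=card\bigl\{\,r\in\mathbb{Z}/n\mathbb{Z}:\ \gcd(r,n)=\gcd(r+1,n)=1\,\bigr\},\qquad n=2j+1.
\end{equation*}

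It then remains to evaluate $N(n)$ by a standard multiplicative count. By the Chinese Remainder Theorem $n\mapsto N(n)$ is multiplicative, so it is enough to compute it at a prime power $p^k$: among the $p^k$ residues, exactly $p^{k-1}$ are divisible by $p$ and exactly $p^{k-1}$ have $r+1$ divisible by $p$, and since $p\ge 2$ these two sets are disjoint, so $N(p^k)=p^k-2p^{k-1}=p^{k-1}(p-2)$. Writing $n=2j+1=\prod_i p_i^{k_i}$ yields
\begin{equation*}
N(2j+1)=\prod_i p_i^{k_i-1}(p_i-2)=(2j+1)\prod_{p\mid 2j+1}\Bigl(1-\frac{2}{p}\Bigr),
\end{equation*}
which is the asserted value.

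Given Theorems~\ref{main_delta} and~\ref{main_theta}, I do not expect a serious obstacle here: the argument is bookkeeping together with a routine multiplicative counting of residues. The one place that warrants attention is the injectivity of the parametrizations $m\mapsto\Lambda_m$ and $m\mapsto\Lambda'_m$; one must confirm that distinct admissible $m$ genuinely give distinct lattices, so that $N(2j+1)$ is the exact number of optimal lattices and not merely an upper bound for it.
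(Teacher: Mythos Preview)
Your proposal is correct and follows essentially the same approach as the paper: reduce to the standard triangle by affine invariance, invoke Theorems~\ref{main_delta} and~\ref{main_theta}, check that distinct admissible $m$ give distinct lattices, and count the admissible residues multiplicatively. The paper packages the last step as a generalized Euler function $\varphi^2(2j+1)$ (Theorem~\ref{euler_k_phi_function}), while you compute it directly via the Chinese Remainder Theorem, but the underlying argument is the same.
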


\section{Some Definitions and Lemmas}
From the definitions of $j$-fold lattice packing and covering, one can easily get the following lemma.
\begin{lem}\label{basic_lemma_on_covering_and_packing}
Let $K$ be a convex disk, $\Lambda$ be a lattice. We have
\begin{enumerate}
\item{$K+\Lambda$ is a $j$-fold lattice packing of $\mathbb{R}^2$ if and only if for every point $u$ in $\mathbb{R}^2$, there exist at most $j$ distinct lattice points $v_1,\ldots,v_j$ in $\Lambda$ such that $u+v_1,\ldots,u+v_j$ all belong to $Int(K)$.}
\item{$K+\Lambda$ is a $j$-fold lattice covering of $\mathbb{R}^2$ if and only if for every point $u$ in $\mathbb{R}^2$, there exist at least $j$ distinct lattice points $v_1,\ldots,v_j$ in $\Lambda$ such that $u+v_1,\ldots,u+v_j$ all belong to $K$.}
\end{enumerate}
\end{lem}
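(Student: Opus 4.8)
The plan is to prove both equivalences by unwinding the definition of a $j$-fold lattice packing (resp.\ covering) of $\mathbb{R}^2$ and translating the multiplicity condition into a statement about how many lattice translates of $K$ (resp.\ of $Int(K)$) contain a prescribed point. The only facts I will need are: translation is a homeomorphism, so $Int(K+v)=Int(K)+v$ for every $v\in\mathbb{R}^2$; the trivial equivalences $x\in Int(K)+v\iff x-v\in Int(K)$ and $x\in K+v\iff x-v\in K$; the central symmetry $-\Lambda=\Lambda$ of a lattice; and the observation that, $K$ being bounded, each translate of $K$ meets $\Lambda$ in only finitely many points, so the cardinalities below are finite integers and the conditions on them are meaningful.

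For part (1): by definition $K+\Lambda=\{K+v:v\in\Lambda\}$ is a $j$-fold packing of $\mathbb{R}^2$ if and only if $\bigcup_{v\in\Lambda}(K+v)\subset\mathbb{R}^2$, which is automatic, and every $x\in\mathbb{R}^2$ lies in the interior of at most $j$ members of the family, that is, $card\{v\in\Lambda: x\in Int(K+v)\}\le j$. Using $Int(K+v)=Int(K)+v$ and then $x\in Int(K)+v\iff x-v\in Int(K)$, this becomes $card\{v\in\Lambda: x-v\in Int(K)\}\le j$ for all $x\in\mathbb{R}^2$. Applying the bijection $v\mapsto -v$ of $\Lambda$ converts it into $card\{v\in\Lambda: x+v\in Int(K)\}\le j$ for all $x\in\mathbb{R}^2$, and renaming $x$ as $u$ this is precisely the statement that for every $u\in\mathbb{R}^2$ there exist at most $j$ distinct lattice points $v_1,\dots,v_j$ with $u+v_1,\dots,u+v_j\in Int(K)$. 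Each step is an equivalence, so (1) follows.

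Part (2) is the same argument with $Int(K)$ replaced by $K$ throughout and with \emph{at most} $j$ replaced by \emph{at least} $j$: $K+\Lambda$ is a $j$-fold covering if and only if each $x\in\mathbb{R}^2$ lies in at least $j$ of the sets $K+v$, if and only if $card\{v\in\Lambda:x-v\in K\}\ge j$ for all $x$, if and only if $card\{v\in\Lambda:x+v\in K\}\ge j$ for all $x$ after the substitution $v\mapsto -v$, which is the asserted condition with $u=x$. I do not anticipate a genuine obstacle; the only points needing a word of care are that the lattice points produced really are pairwise distinct — immediate, since $v\mapsto -v$ and $v\mapsto u+v$ are injective — and that the universal quantifiers over $x$ and over $u$ range over the same set, so the two formulations match exactly.
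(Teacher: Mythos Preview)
Your proof is correct and follows the same approach as the paper, which simply states that the lemma follows easily from the definitions of $j$-fold lattice packing and covering and gives no further argument. Your explicit unwinding of the definitions via $Int(K+v)=Int(K)+v$, the substitution $v\mapsto -v$ using $-\Lambda=\Lambda$, and the relabeling $x\leadsto u$ is exactly the intended routine verification.
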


\begin{defn}
Given a convex disk $K$ and a lattice $\Lambda$, Let
\begin{equation*}
\lambda^j(K,\Lambda)=
\max\{l>0 : lK+\Lambda \text{~is a~} j\text{-fold lattice packing of~} \mathbb{R}^2\}
\end{equation*}
and
\begin{equation*}
\lambda_j(K,\Lambda)=
\min\{l>0 : lK+\Lambda \text{~is a~} j\text{-fold lattice covering of~} \mathbb{R}^2\}
\end{equation*}
\end{defn}

In this section, we denote by $T$ the triangle of vertices $(0,0)$, $(1,0)$ and $(0,1)$. Let $\Lambda$ be an arbitrary lattice and $S_{\Lambda}$ is a fundamental domain of $\Lambda$ (as shown in Figure \ref{S_Lambda}). We note that $S_{\Lambda}+\Lambda$ is an exact tiling of $\mathbb{R}^2$. Let
\begin{equation*}
T^j(\Lambda)=\lambda^j(T,\Lambda)\cdot T,
\end{equation*}
and
\begin{equation*}
T_j(\Lambda)=\lambda_j(T,\Lambda)\cdot T.
\end{equation*}

\begin{figure}[!ht]
  \centering
    \includegraphics[scale=.80]{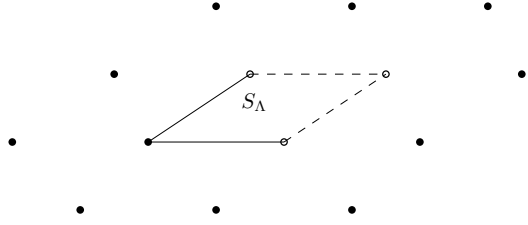}
   \caption{$S_{\Lambda}$}\label{S_Lambda}
\end{figure}

For $(x_1,y_1),(x_2,y_2)\in\mathbb{R}^2$, we define the relation $\prec$ by $(x_1,y_1)\prec (x_2,y_2)$ if and only if either
$$x_1+y_1<x_2+y_2$$
or
$$x_1+y_1=x_2+y_2 \text{~and~} x_1<x_2.$$

\begin{rem}
For $u,v,w\in\mathbb{R}^2$, one can see that if $u\neq v$, then either $u\prec v$ or $v\prec u$ ,and if $u\prec v$, then $u+w\prec v+w$.
\end{rem}

Given a point $u$ in $\mathbb{R}^2$, we define
\begin{equation*}
V_j(u)=(u+\Lambda)\cap T_j(\Lambda).
\end{equation*}
Since $T_j(\Lambda)+\Lambda$ is a $j$-fold lattice covering, by Lemma \ref{basic_lemma_on_covering_and_packing} we have $card\{V_j(u)\}\geq j$.
We may assume, without loss of generality, that
\begin{equation*}
V_j(u)=\{u_1, u_2,\ldots,u_l\},
\end{equation*}
where $l\geq j$ and $u_1\prec u_2\prec\ldots \prec u_l$. Let
\begin{equation*}
W_j(u)=\{u_1, u_2,\ldots,u_j\},
\end{equation*}
and
\begin{equation*}
S_j(\Lambda)=\bigcup_{u\in S_{\Lambda}}W_j(u),
\end{equation*}
where $j=1,2,\ldots$ and let $S_0(\Lambda)=\emptyset$.

\begin{lem}\label{basic_lem1}
Let $u$ be a point in $\mathbb{R}^2$ and $v$ be a lattice point in $\Lambda$. Suppose that the $x$-coordinate and the $y$-coordinate of $u+v$ both are non-negative. If $u\in S_j(\Lambda)$ and $u+v\prec u$, then $u+v\in S_j(\Lambda)$.
\end{lem}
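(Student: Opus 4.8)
The plan is to unwind the definitions of $S_j(\Lambda)$ and $W_j(u)$ and exploit the monotonicity of the order $\prec$ under translation by a fixed vector. Since $u\in S_j(\Lambda)$, by definition there is a point $p\in S_\Lambda$ with $u\in W_j(p)$, so $u=p_i$ for some $i$ with $1\le i\le j$ in the enumeration $V_j(p)=\{p_1\prec p_2\prec\cdots\prec p_l\}$. I want to conclude that $u+v$ is also among the first $j$ elements of the same set $V_j(p)$, which would give $u+v\in W_j(p)\subset S_j(\Lambda)$.

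First I would check that $u+v$ actually lies in $V_j(p)$. We have $u+v\in u+\Lambda=p+\Lambda$ since $v\in\Lambda$ and $u\equiv p\pmod\Lambda$, so it remains to verify $u+v\in T_j(\Lambda)$. Here the hypothesis that the $x$- and $y$-coordinates of $u+v$ are non-negative enters: $u+v\prec u$ together with $u\in T_j(\Lambda)=\lambda_j(T,\Lambda)\cdot T$ forces $(u+v)_x+(u+v)_y\le u_x+u_y\le\lambda_j(T,\Lambda)$, and since both coordinates of $u+v$ are non-negative, the point $u+v$ lies in the scaled triangle $\{(x,y): x\ge0,\,y\ge0,\,x+y\le\lambda_j(T,\Lambda)\}=T_j(\Lambda)$. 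Thus $u+v\in V_j(p)$.

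Next I would argue that $u+v$ is small in the $\prec$-order relative to the other elements of $V_j(p)$. Since $u=p_i$ with $i\le j$, the elements $p_1,\ldots,p_i$ all satisfy $p_k\preceq u$, and the strictly larger elements $p_{i+1},\ldots,p_l$ satisfy $u\prec p_k$. Now $u+v\prec u\preceq p_k$ for every $k\ge i$ by transitivity, and $u+v\ne p_k$ for $k=1,\ldots,i$ unless $u+v$ coincides with one of them; in any case the number of elements of $V_j(p)$ that are $\prec u+v$ is at most $i-1\le j-1$ (all of them lie among $\{p_1,\ldots,p_{i-1}\}$, since $p_i=u\not\prec u+v$ and $p_k\not\prec u+v$ for $k>i$). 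Hence $u+v$ occupies a position $\le i\le j$ in the reordering of $V_j(p)$, so $u+v\in W_j(p)\subset S_j(\Lambda)$.

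The main subtlety is the bookkeeping in the last step: one must be careful that inserting $u+v$ into the sorted list $V_j(p)$ cannot push $u+v$ past position $j$, which comes down to the fact that everything $\succeq u$ in $V_j(p)$ is automatically $\succ u+v$, so only the at most $i-1$ elements strictly below $u$ can precede $u+v$. A minor point to handle cleanly is the possibility that $u+v$ already equals some $p_k$ with $k<i$ (then the statement is immediate) or $k\ge i$ (impossible, since those are $\succeq u\succ u+v$); either way the conclusion holds.
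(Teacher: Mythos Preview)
Your proof is correct and follows exactly the paper's approach: show $u+v\in T_j(\Lambda)$ using the non-negativity of its coordinates together with $u+v\prec u$, and then observe that since $u+v\in V_j(p)$ and $u+v\prec u=p_i$ with $i\le j$, the element $u+v$ must be some $p_k$ with $k<i\le j$, hence $u+v\in W_j(p)\subset S_j(\Lambda)$. The paper's own proof compresses all of this into two sentences (``$u+v\in T_j(\Lambda)$; it follows from the definition of $S_j(\Lambda)$''), so your version is simply a faithful expansion---your final ``insertion'' bookkeeping is slightly overcautious, since $u+v$ is already one of the $p_k$'s rather than a new element, but the conclusion is the same.
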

\begin{proof}
Since $u\in S_j(\Lambda)\subset T_j(\Lambda)$ and $u+v\prec u$, we know that $u+v\in T_j(\Lambda)$. It follows from the definition of $S_j(\Lambda)$ that $u+v\in S_j(\Lambda)$.
\end{proof}

\begin{lem}\label{basic_lem_S_j}
Let $u$ be a point in $\mathbb{R}^2$. Suppose that the $x$-coordinate and the $y$-coordinate of $u$ both are non-negative. If $u\notin S_j(\Lambda)$ then $u'\prec u$ for all $u'\in W_j(u)$.
\end{lem}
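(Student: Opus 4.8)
The plan is to reduce the statement to a single ``reverse'' application of Lemma~\ref{basic_lem1}, after first recording the coset-invariance of the sets $V_j$ and $W_j$.

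\emph{Step 1 (coset-invariance).} I would first observe that $V_j(u)$, and hence $W_j(u)$, depends only on the coset $u+\Lambda$: the set $V_j(u)=(u+\Lambda)\cap T_j(\Lambda)$ is unchanged if $u$ is replaced by any point congruent to $u$ modulo $\Lambda$, and since $\prec$ is a fixed order on $\mathbb{R}^2$ the sublist $W_j$ of the $j$ smallest elements is then the same as well. Because $S_\Lambda$ is a fundamental domain of $\Lambda$, there is a $w\in S_\Lambda$ with $w-u\in\Lambda$, so $W_j(u)=W_j(w)$. By the definition $S_j(\Lambda)=\bigcup_{w'\in S_\Lambda}W_j(w')$ this gives $W_j(u)\subseteq S_j(\Lambda)$; in particular every element of $W_j(u)$ lies in $S_j(\Lambda)$.

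\emph{Step 2 (the dichotomy).} Now assume $u\notin S_j(\Lambda)$ and fix an arbitrary $u'\in W_j(u)$. By Step~1 we have $u'\in S_j(\Lambda)$, hence $u'\neq u$, and the Remark on $\prec$ forces either $u'\prec u$ or $u\prec u'$. Suppose, for contradiction, that $u\prec u'$. Since $u'\in W_j(u)\subseteq V_j(u)\subseteq u+\Lambda$, write $u'=u+v$ with $v\in\Lambda$. I would then apply Lemma~\ref{basic_lem1} to the point $u'$ and the lattice vector $-v$: the point $u'+(-v)=u$ has non-negative coordinates by hypothesis, $u'\in S_j(\Lambda)$ by Step~1, and $u'+(-v)=u\prec u'$ by the assumption; the lemma therefore yields $u=u'+(-v)\in S_j(\Lambda)$, contradicting $u\notin S_j(\Lambda)$. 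Hence $u'\prec u$ for every $u'\in W_j(u)$.

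I expect the proof to be short, and the only point requiring care — the step I would single out as the main obstacle — is recognizing that Lemma~\ref{basic_lem1} has to be invoked with base point $u'$ and translation $-v$ rather than with base point $u$, so that the roles of ``the point known to lie in $S_j(\Lambda)$'' and ``the point whose membership we are deducing'' are interchanged; the coset-invariance in Step~1 is essentially bookkeeping. (For $j=0$ the statement is vacuous, since $W_0(u)=\emptyset$, so one may freely assume $j\geq 1$, although the argument above never uses this.)
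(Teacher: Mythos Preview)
Your proof is correct and follows essentially the same route as the paper's: assume $u\prec u'$ for some $u'\in W_j(u)\subset S_j(\Lambda)$ and apply Lemma~\ref{basic_lem1} (with base point $u'$ and translation $-v$) to force $u\in S_j(\Lambda)$. The only difference is that you spell out the coset-invariance $W_j(u)\subset S_j(\Lambda)$ in Step~1, which the paper simply asserts.
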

\begin{proof}
From Lemma \ref{basic_lem1}, we know that if $u\prec u'$ for some $u'\in W_j(u)\subset S_j(\Lambda)$, then $u\in S_j(\Lambda)$.
\end{proof}

\begin{lem}\label{between_T_up_and_down}
$Int(T^j(\Lambda))\subset S_j(\Lambda)\subset T_j(\Lambda)$.
\end{lem}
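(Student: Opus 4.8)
The two inclusions have very different character: $S_j(\Lambda)\subseteq T_j(\Lambda)$ is immediate, and the whole content of the lemma lies in $Int(T^j(\Lambda))\subseteq S_j(\Lambda)$. For the easy one, note that for every $u$ one has $W_j(u)\subseteq V_j(u)=(u+\Lambda)\cap T_j(\Lambda)\subseteq T_j(\Lambda)$, so the union over $u\in S_\Lambda$ that defines $S_j(\Lambda)$ is contained in $T_j(\Lambda)$. As a preliminary for the hard inclusion I would also record the inequality $\lambda^j(T,\Lambda)\le\lambda_j(T,\Lambda)$, i.e.\ $Int(T^j(\Lambda))\subseteq T_j(\Lambda)$: if this failed, then for any two distinct scales $l_1,l_2$ lying strictly between $\lambda_j(T,\Lambda)$ and $\lambda^j(T,\Lambda)$ the family $l_iT+\Lambda$ would be at once a $j$-fold lattice packing and a $j$-fold lattice covering, hence a $j$-fold lattice tiling, whose density $|l_iT|/d(\Lambda)$ is necessarily $j$; this forces $l_1=l_2$, a contradiction.

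For $Int(T^j(\Lambda))\subseteq S_j(\Lambda)$ I would argue by contradiction. Suppose $w\in Int(T^j(\Lambda))\setminus S_j(\Lambda)$. Then $w$ has positive coordinates and, by the preliminary, $w\in T_j(\Lambda)$, so $w\in V_j(w)$ while $w\notin W_j(w)$. By Lemma~\ref{basic_lem_S_j} the set $W_j(w)$ therefore consists of $j$ distinct points $u_1,\dots,u_j$, each lying in $T_j(\Lambda)$, each satisfying $u_i\prec w$, and each lying in $S_j(\Lambda)$ (here one uses that $V_j$, and hence $W_j$, depends only on the coset $w+\Lambda$).

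The feature distinguishing the triangle now enters. Since $u_i$ lies in $T_j(\Lambda)=\lambda_j(T,\Lambda)\cdot T$ it has non-negative coordinates, and since $u_i\prec w$ the sum of its coordinates is at most the sum of the coordinates of $w$, which is strictly less than $\lambda^j(T,\Lambda)$; hence $u_i\in\lambda^j(T,\Lambda)\cdot T=T^j(\Lambda)$. Consequently $u_1,\dots,u_j,w$ are $j+1$ distinct points of $(w+\Lambda)\cap T^j(\Lambda)$. If all of them lay in $Int(T^j(\Lambda))$, this would contradict, via Lemma~\ref{basic_lemma_on_covering_and_packing}, the fact that $T^j(\Lambda)+\Lambda$ is a $j$-fold lattice packing. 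Because each $u_i$ already has coordinate-sum strictly less than $\lambda^j(T,\Lambda)$ and non-negative coordinates, the only remaining obstruction is that some $u_i$ may lie on one of the two legs of $T^j(\Lambda)$, that is, on a coordinate axis.

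Disposing of this leg case is the heart of the proof and the step I expect to cost the most work. For a point $w$ whose coset $w+\Lambda$ meets neither coordinate axis — which holds for every $w$ outside a countable union of lines — the argument above already yields the contradiction, so $Int(T^j(\Lambda))\setminus S_j(\Lambda)$ has measure zero. To upgrade this to an honest set inclusion I would approximate an arbitrary $w\in Int(T^j(\Lambda))$ by generic points $w_n\in S_j(\Lambda)$ with $w_n\to w$ and pass to the limit, exploiting the $\prec$-monotonicity of $S_j(\Lambda)$ furnished by Lemma~\ref{basic_lem1} together with the openness of $T^j(\Lambda)$ to conclude $w\in S_j(\Lambda)$. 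Controlling this limit — in particular, tracking which lattice points of the coset enter or leave $T_j(\Lambda)$ and how the $\prec$-order on $V_j$ reacts — is the delicate point I anticipate having to handle carefully.
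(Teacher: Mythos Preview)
Your argument is essentially the paper's: assume $w\in Int(T^j(\Lambda))\setminus S_j(\Lambda)$, invoke Lemma~\ref{basic_lem_S_j} to get $u'\prec w$ for every $u'\in W_j(w)$, and conclude that $W_j(w)\cup\{w\}$ furnishes $j+1$ points of a single coset inside $T^j(\Lambda)$, contradicting the packing via Lemma~\ref{basic_lemma_on_covering_and_packing}.

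Two remarks on the surrounding scaffolding. First, your preliminary inequality $\lambda^j(T,\Lambda)\le\lambda_j(T,\Lambda)$ is unnecessary. Lemma~\ref{basic_lem_S_j} only requires that $w$ have non-negative coordinates, which is immediate from $w\in Int(T^j(\Lambda))$; and you never actually need $w\in V_j(w)$, since $w\notin W_j(w)$ already follows from $u'\prec w$ for all $u'\in W_j(w)$.

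Second, your concern about the ``leg case'' is a fair reading of the situation---the paper simply writes ``this implies that $W_j(u)\subset Int(T^j(\Lambda))$'' without addressing points on the coordinate axes---but your proposed remedy (measure-zero exceptional set, approximation, passage to the limit) is far heavier than needed. A one-line perturbation suffices: each $u_i\in W_j(w)$ has non-negative coordinates and coordinate-sum at most that of $w$, so for any sufficiently small $\varepsilon>0$ the translated points $w+(\varepsilon,\varepsilon)$ and $u_i+(\varepsilon,\varepsilon)$ all have strictly positive coordinates and coordinate-sum strictly below $\lambda^j(T,\Lambda)$, hence all lie in $Int(T^j(\Lambda))$. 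These are $j+1$ distinct points of the coset $(w+(\varepsilon,\varepsilon))+\Lambda$ in $Int(T^j(\Lambda))$, and the contradiction with Lemma~\ref{basic_lemma_on_covering_and_packing} is immediate. No genericity or limiting argument is required.
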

\begin{proof}
Since $W_j(u)\subset V_j(u)\subset T_j(\Lambda)$, it is obvious that $S_j(\Lambda)\subset T_j(\Lambda)$. Now assume that there exists $u\in Int(T^j(\Lambda))\setminus S_j(\Lambda)$. From Lemma \ref{basic_lem_S_j}, since $u\notin S_j(\Lambda)$, we have that for all $u'\in W_j(u)$, $u'\prec u$. This implies that $W_j(u)\subset Int(T^j(\Lambda))$. We note that $card\{W_j(u)\cup\{u\}\}=j+1$ and $T^j(\Lambda)+\Lambda$ is a $j$-fold lattice packing of $\mathbb{R}^2$. From Lemma \ref{basic_lemma_on_covering_and_packing}, one can see that this is impossible.
\end{proof}

\begin{lem}\label{S_j_subset_S_j_1}
$S_j(\Lambda)\subset S_{j+1}(\Lambda)$ and for every $u\in\mathbb{R}^2$ there exists a unique $v\in\Lambda$ such that $u+v\in S_{j+1}(\Lambda)\setminus S_j(\Lambda)$.
\end{lem}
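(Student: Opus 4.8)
The plan is to prove a single clean structural fact — a description of $S_j(\Lambda)$ one coset of $\Lambda$ at a time — from which both assertions of the lemma fall out immediately.

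First I would note that the assignment $u\mapsto V_j(u)\mapsto W_j(u)$ depends only on the coset $u+\Lambda$: the set $V_j(u)=(u+\Lambda)\cap T_j(\Lambda)$ is visibly coset-invariant, and $W_j(u)$ is extracted from $V_j(u)$ using only the order $\prec$. Next I would use that $T_j(\Lambda)=\lambda_j(T,\Lambda)\cdot T$ is a nonnegative dilate of the triangle with vertices $(0,0),(1,0),(0,1)$, so
$$T_j(\Lambda)=\{(x,y):x\ge 0,\ y\ge 0,\ x+y\le\lambda_j(T,\Lambda)\},$$
which lies in the closed first quadrant $\overline Q=\{(x,y):x\ge 0,\ y\ge 0\}$ and is a $\prec$-lower set within $\overline Q$ (if $p\in T_j(\Lambda)$ and $q\in\overline Q$ with $q\prec p$, then $x(q)+y(q)\le x(p)+y(p)\le\lambda_j(T,\Lambda)$, so $q\in T_j(\Lambda)$). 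I would then enumerate the points of $(u+\Lambda)\cap\overline Q$ in $\prec$-increasing order as $p_1\prec p_2\prec\cdots$; this enumeration has order type $\omega$ because only finitely many of these points satisfy $x+y\le L$ for any fixed $L$, and it does not depend on $j$. Because each $T_j(\Lambda)$ is a $\prec$-lower set contained in $\overline Q$, the set $V_j(u)=(u+\Lambda)\cap T_j(\Lambda)$ is an initial segment $\{p_1,\ldots,p_{l}\}$ of this sequence; the $j$-fold covering property (Lemma \ref{basic_lemma_on_covering_and_packing}) forces $l\ge j$, so $W_j(u)=\{p_1,\ldots,p_j\}$, depending only on the coset $u+\Lambda$.

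Next I would invoke that $S_\Lambda$ is a fundamental domain with $S_\Lambda+\Lambda$ an exact tiling, hence meets every coset of $\Lambda$ in exactly one point; therefore the sets $W_j(u)$ with $u\in S_\Lambda$ are pairwise disjoint, and for every $u\in\mathbb{R}^2$,
$$S_j(\Lambda)\cap(u+\Lambda)=W_j(u)=\{p_1,\ldots,p_j\}.$$
Running this for the exponent $j+1$ as well (using that $T_{j+1}(\Lambda)+\Lambda$ is a $(j+1)$-fold covering, so at least $j+1$ of the $p_i$ lie in $T_{j+1}(\Lambda)$) gives $S_{j+1}(\Lambda)\cap(u+\Lambda)=\{p_1,\ldots,p_{j+1}\}$. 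Comparing the two coset by coset yields $S_j(\Lambda)\subset S_{j+1}(\Lambda)$; and since
$$\bigl(S_{j+1}(\Lambda)\setminus S_j(\Lambda)\bigr)\cap(u+\Lambda)=\{p_{j+1}\}$$
is a single point, the unique $v\in\Lambda$ with $u+v\in S_{j+1}(\Lambda)\setminus S_j(\Lambda)$ is $v=p_{j+1}-u$, establishing existence and uniqueness at once.

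I do not expect a serious obstacle here: once the coset-wise description of $S_j(\Lambda)$ is available the rest is bookkeeping. The two points needing a little care are (i) checking that $T_j(\Lambda)$ is genuinely a $\prec$-lower set inside the first quadrant — this is exactly where the particular triangle $T$ and the particular order $\prec$ interact, and it is what makes $V_j(u)$ an initial segment rather than an arbitrary finite subset of the coset — and (ii) verifying that $W_j(u)$ is coset-invariant so that $S_j(\Lambda)$ splits as a disjoint union over cosets. One could instead run the same argument phrased through Lemmas \ref{basic_lem1} and \ref{basic_lem_S_j} in place of the explicit enumeration, but the coset picture seems the most transparent way to organize it.
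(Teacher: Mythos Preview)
Your proof is correct and follows the same coset-by-coset approach as the paper, which simply asserts that $W_j(u)\subset W_{j+1}(u)$ with $\operatorname{card}\{W_{j+1}(u)\setminus W_j(u)\}=1$ and then concludes from the definition of $S_j(\Lambda)$. Your version is in fact more careful: by observing that each $T_j(\Lambda)$ is a $\prec$-lower set in the closed first quadrant you justify why the first $j$ elements of $V_j(u)$ and of $V_{j+1}(u)$ coincide (so that $W_j(u)$ really is $\{p_1,\dots,p_j\}$ independently of $j$), a point the paper leaves implicit.
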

\begin{proof}
By the definition of $W_j(u)$, it is easy to see that $W_j(u)\subset W_{j+1}(u)$ and $card\{W_{j+1}(u)\setminus W_j(u)\}=1$. From the definition of $S_j(\Lambda)$, one can obtain the result.
\end{proof}

\begin{lem}\label{1_fold_exact_tiling}
$(S_{j+1}(\Lambda)\setminus S_j(\Lambda))+\Lambda$ is an exact tiling of $\mathbb{R}^2$.
\end{lem}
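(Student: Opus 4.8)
\emph{Proof proposal.} The plan is to obtain this directly from Lemma \ref{S_j_subset_S_j_1}. Abbreviate $D_j:=S_{j+1}(\Lambda)\setminus S_j(\Lambda)$. First I would record that $D_j$ is bounded: by Lemma \ref{between_T_up_and_down} we have $S_{j+1}(\Lambda)\subset T_{j+1}(\Lambda)=\lambda_{j+1}(T,\Lambda)\cdot T$, which is a bounded triangle, so $D_j$ is bounded as a subset of it; it is also measurable, being the difference of two measurable sets. Thus $\{D_j+v:v\in\Lambda\}$ is a family of the kind to which the definition of an (exact) tiling applies.

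The heart of the argument is then just a reinterpretation. Lemma \ref{S_j_subset_S_j_1} asserts that for every $u\in\mathbb{R}^2$ there is exactly one $v\in\Lambda$ with $u+v\in D_j$; equivalently, replacing $v$ by $-v$, there is exactly one $w\in\Lambda$ with $u\in D_j+w$. In other words, $D_j$ contains exactly one point of each $\Lambda$-orbit, so every point of $\mathbb{R}^2$ belongs to precisely one member of the family $\{D_j+v:v\in\Lambda\}$ (in particular these members are pairwise distinct, since a point lying in both $D_j+v_1$ and $D_j+v_2$ would contradict the uniqueness). By the definition of an exact $1$-fold tiling given in the introduction, a family in which every point of the plane lies in exactly one set is precisely an exact tiling; hence $(S_{j+1}(\Lambda)\setminus S_j(\Lambda))+\Lambda$ is an exact tiling of $\mathbb{R}^2$.

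I do not anticipate a genuine obstacle here: the substantive content — that exactly one $\Lambda$-translate of each point enters $S_{j+1}(\Lambda)$ without lying in $S_j(\Lambda)$ — is already packaged in Lemma \ref{S_j_subset_S_j_1} (and, through it, in the earlier lemmas of this section, in particular Lemma \ref{basic_lem1}). What remains is only the bookkeeping of rephrasing ``a unique representative in each $\Lambda$-orbit'' as ``an exact tiling,'' together with the trivial observation that $D_j$ is a bounded measurable set.
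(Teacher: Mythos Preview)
Your proposal is correct and follows essentially the same route as the paper: the paper's proof is the single line ``This immediately follows from Lemma~\ref{S_j_subset_S_j_1},'' and what you have written is precisely the unpacking of that sentence (the orbit-representative reinterpretation together with the trivial boundedness/measurability check). There is no substantive difference between the two arguments.
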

\begin{proof}
This immediately follows from Lemma \ref{S_j_subset_S_j_1}.
\end{proof}

\begin{lem}\label{j_fold_exact_tiling}
$S_j(\Lambda)+\Lambda$ is an exact $j$-fold tiling of $\mathbb{R}^2$.
\end{lem}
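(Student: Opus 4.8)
The plan is to build up the result from the ground by iterating Lemma \ref{1_fold_exact_tiling}. We know $S_0(\Lambda)=\emptyset$, and for each $i\geq 0$ Lemma \ref{1_fold_exact_tiling} tells us that $(S_{i+1}(\Lambda)\setminus S_i(\Lambda))+\Lambda$ is an exact tiling of $\mathbb{R}^2$; that is, every point of $\mathbb{R}^2$ lies in exactly one translate $(S_{i+1}(\Lambda)\setminus S_i(\Lambda))+v$ with $v\in\Lambda$. By Lemma \ref{S_j_subset_S_j_1} the sets $S_0(\Lambda)\subset S_1(\Lambda)\subset\cdots\subset S_j(\Lambda)$ are nested, so we have the disjoint decomposition
\begin{equation*}
S_j(\Lambda)=\bigsqcup_{i=0}^{j-1}\bigl(S_{i+1}(\Lambda)\setminus S_i(\Lambda)\bigr).
\end{equation*}

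Now fix an arbitrary point $w\in\mathbb{R}^2$. I want to count the lattice points $v\in\Lambda$ for which $w\in S_j(\Lambda)+v$, equivalently $w-v\in S_j(\Lambda)$. Using the decomposition above, $w-v\in S_j(\Lambda)$ holds if and only if $w-v\in S_{i+1}(\Lambda)\setminus S_i(\Lambda)$ for exactly one index $i\in\{0,1,\ldots,j-1\}$, and these $j$ events are mutually exclusive as we range over $i$ for a fixed $v$. For each fixed $i$, Lemma \ref{1_fold_exact_tiling} (applied at level $i$) says there is exactly one $v\in\Lambda$ with $w-v\in S_{i+1}(\Lambda)\setminus S_i(\Lambda)$. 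Summing over the $j$ values of $i$, the total number of $v\in\Lambda$ with $w-v\in S_j(\Lambda)$ is exactly $j$. Hence every point of $\mathbb{R}^2$ is covered exactly $j$ times by the family $S_j(\Lambda)+\Lambda$, which is precisely the statement that $S_j(\Lambda)+\Lambda$ is an exact $j$-fold tiling of $\mathbb{R}^2$.

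There is no real obstacle here; the only point requiring a little care is the bookkeeping — making sure that the $v$'s produced for different values of $i$ are genuinely distinct (which follows because the sets $S_{i+1}(\Lambda)\setminus S_i(\Lambda)$ are pairwise disjoint, so a single $v$ cannot satisfy $w-v\in S_{i+1}(\Lambda)\setminus S_i(\Lambda)$ for two different $i$) and that no $v$ outside these cases contributes. An alternative phrasing is a one-line induction on $j$: the base case $j=0$ is the empty family tiling $0$-fold, and the inductive step adds $(S_{j}(\Lambda)\setminus S_{j-1}(\Lambda))+\Lambda$, which by Lemma \ref{1_fold_exact_tiling} contributes exactly one more layer of covering everywhere, turning an exact $(j-1)$-fold tiling into an exact $j$-fold tiling. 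Either way the proof is short.
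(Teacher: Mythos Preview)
Your proof is correct and follows exactly the paper's approach: decompose $S_j(\Lambda)$ into the pairwise disjoint layers $S_{i+1}(\Lambda)\setminus S_i(\Lambda)$ for $i=0,\dots,j-1$ and apply Lemma~\ref{1_fold_exact_tiling} to each. The paper's proof is terser (it states the decomposition and says the result is immediate), but your added bookkeeping and the alternative inductive phrasing are just spelling out the same argument.
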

\begin{proof}
Note that $S_j(\Lambda)=(S_j(\Lambda)\setminus S_{j-1}(\Lambda))\cup (S_{j-1}(\Lambda)\setminus S_{j-2}(\Lambda))\cup\cdots\cup (S_2(\Lambda)\setminus S_{1}(\Lambda))\cup S_1(\Lambda)$ and $(S_i(\Lambda)\setminus S_{i-1}(\Lambda))\cap (S_j(\Lambda)\setminus S_{j-1}(\Lambda))=\emptyset$, where $i\neq j$. Hence, the result immediately follows from Lemma \ref{1_fold_exact_tiling}.
\end{proof}

\begin{lem}\label{basic_lem3}
Let $u$ be a point in $\mathbb{R}^2$ and $v$ be a lattice point in $\Lambda$. Suppose that $u\in S_{j+1}(\Lambda)\setminus S_j(\Lambda)$.
If  $u+v\in S_{j+1}(\Lambda)$ and $v\neq(0,0)$, then $u+v\in S_j(\Lambda)$.
\end{lem}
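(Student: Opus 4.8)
The plan is to obtain Lemma~\ref{basic_lem3} directly from Lemma~\ref{1_fold_exact_tiling}, which asserts that the family $\{(S_{j+1}(\Lambda)\setminus S_j(\Lambda))+w : w\in\Lambda\}$ is an exact tiling of the plane; equivalently, every point of $\mathbb{R}^2$ lies in exactly one of these translates, so that $S_{j+1}(\Lambda)\setminus S_j(\Lambda)$ serves as a fundamental domain for $\Lambda$. The crucial observation is that the hypothesis $u\in S_{j+1}(\Lambda)\setminus S_j(\Lambda)$ identifies which translate contains each point of the coset $u+\Lambda$: for every $w\in\Lambda$ the point $u+w$ belongs to $(S_{j+1}(\Lambda)\setminus S_j(\Lambda))+w$.

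First I would argue by contradiction, assuming $u+v\in S_{j+1}(\Lambda)\setminus S_j(\Lambda)$. Then, taking $w=(0,0)$, the point $u+v$ lies in the translate $(S_{j+1}(\Lambda)\setminus S_j(\Lambda))+(0,0)$. On the other hand, since $u\in S_{j+1}(\Lambda)\setminus S_j(\Lambda)$ and $v\in\Lambda$, the same point $u+v$ lies in the translate $(S_{j+1}(\Lambda)\setminus S_j(\Lambda))+v$. Because $v\neq(0,0)$, these are two distinct members of the family, so $u+v$ belongs to at least two of them, contradicting the exactness guaranteed by Lemma~\ref{1_fold_exact_tiling}.

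Hence $u+v\notin S_{j+1}(\Lambda)\setminus S_j(\Lambda)$. Since $S_j(\Lambda)\subset S_{j+1}(\Lambda)$ by Lemma~\ref{S_j_subset_S_j_1}, we may write $S_{j+1}(\Lambda)=S_j(\Lambda)\cup\bigl(S_{j+1}(\Lambda)\setminus S_j(\Lambda)\bigr)$, and combining this with the hypothesis $u+v\in S_{j+1}(\Lambda)$ forces $u+v\in S_j(\Lambda)$, which is the claim.

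I do not expect a genuine obstacle: the lemma is essentially a restatement of the fundamental-domain property of $S_{j+1}(\Lambda)\setminus S_j(\Lambda)$. The only points demanding care are the precise invocation of the definition of an \emph{exact} tiling (each point of $\mathbb{R}^2$ lying in exactly one translate) and the remark that $v\neq(0,0)$ is exactly what makes the two translates appearing in the argument distinct.
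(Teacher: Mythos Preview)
Your proof is correct and follows essentially the same approach as the paper. The paper argues by contradiction via the uniqueness clause of Lemma~\ref{S_j_subset_S_j_1}, while you invoke its immediate corollary Lemma~\ref{1_fold_exact_tiling}; since the latter is derived in one line from the former, the two arguments are the same in substance.
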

\begin{proof}
Assume that $u+v\notin S_j(\Lambda)$. Then $u+v\in S_{j+1}(\Lambda)\setminus S_j(\Lambda)$. But $u\in S_{j+1}(\Lambda)\setminus S_j(\Lambda)$, from Lemma \ref{S_j_subset_S_j_1}, we know that $u$ and $u+v$ must be identical.
\end{proof}

\begin{lem}\label{basic_lem2}
Let $u$ be a point in $\mathbb{R}^2$ and $v$ be a lattice point in $\Lambda$. Suppose that $u\in S_{j+1}(\Lambda)\setminus S_j(\Lambda)$. If $u\prec u+v$, then $u+v\notin S_{j+1}(\Lambda)$.
\end{lem}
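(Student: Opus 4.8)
The plan is to argue by contradiction, chaining Lemma~\ref{basic_lem3} with Lemma~\ref{basic_lem1} after reversing the lattice vector. Assume, contrary to the claim, that $u+v\in S_{j+1}(\Lambda)$. Since $u\prec u+v$ and $\prec$ is a strict order (so $u\not\prec u$), we must have $v\neq(0,0)$. Then Lemma~\ref{basic_lem3}, applied to $u$ and $v$, immediately gives $u+v\in S_j(\Lambda)$.

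Next I would apply Lemma~\ref{basic_lem1} with the point $u+v$ playing the role of ``$u$'' and the lattice vector $-v$ playing the role of ``$v$'', so that the sum ``$u+v$'' in that lemma becomes $(u+v)+(-v)=u$. The three hypotheses are checked as follows: (i) the $x$- and $y$-coordinates of $u$ are non-negative, because $u\in S_{j+1}(\Lambda)\subset T_{j+1}(\Lambda)=\lambda_{j+1}(T,\Lambda)\cdot T$ by Lemma~\ref{between_T_up_and_down}, and $T$ lies in the closed first quadrant; (ii) $u+v\in S_j(\Lambda)$, which was just established; (iii) $(u+v)+(-v)=u\prec u+v$, which is exactly the standing hypothesis. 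Lemma~\ref{basic_lem1} therefore yields $u\in S_j(\Lambda)$, contradicting the assumption $u\in S_{j+1}(\Lambda)\setminus S_j(\Lambda)$. Hence $u+v\notin S_{j+1}(\Lambda)$.

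The only point that calls for a moment's care is the non-negativity hypothesis needed to invoke Lemma~\ref{basic_lem1}; I expect it to be immediate from the inclusion $S_{j+1}(\Lambda)\subset T_{j+1}(\Lambda)$ together with the fact that the reference triangle $T$ has vertices $(0,0),(1,0),(0,1)$ and so sits in the first quadrant, whence every positive dilate of $T$ does too. Everything else is a direct application of lemmas already proved in this section, so I do not anticipate any real obstacle — this statement is essentially a dual bookkeeping companion to Lemma~\ref{basic_lem3}, obtained by swapping $v$ for $-v$.
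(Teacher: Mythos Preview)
Your proof is correct and follows essentially the same line as the paper's: both argue by contradiction, first invoke Lemma~\ref{basic_lem3} to push $u+v$ down into $S_j(\Lambda)$, and then derive the contradiction via Lemma~\ref{basic_lem1} (the paper routes this through Lemma~\ref{basic_lem_S_j}, which is itself an immediate corollary of Lemma~\ref{basic_lem1}, while you apply Lemma~\ref{basic_lem1} directly with the reversed vector $-v$). The non-negativity check you flag is handled exactly as you say, via $S_{j+1}(\Lambda)\subset T_{j+1}(\Lambda)$ from Lemma~\ref{between_T_up_and_down}.
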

\begin{proof}
Assume that $u+v\in S_{j+1}(\Lambda)$. From Lemma \ref{basic_lem3}, since $u\in  S_{j+1}(\Lambda)\setminus S_j(\Lambda)$, we know that $u+v\in S_j(\Lambda)$, i.e., $u+v\in W_j(u)$. But $u\notin S_j(\Lambda)$, from Lemma \ref{basic_lem_S_j}, we have that $u+v\prec u$. This is a contradiction.
\end{proof}

\begin{lem}\label{left_and_under_must_belong}
If $(x,y)\in S_j(\Lambda)$, then $(x',y)\in S_j(\Lambda)$ and $(x,y')\in S_j(\Lambda)$, for all $0\leq x' \leq x$ and $0\leq y'\leq y$.
\end{lem}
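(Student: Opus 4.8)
The plan is to argue by contradiction using Lemma~\ref{basic_lem_S_j}, exploiting that the relation $\prec$ is invariant under any translation and that $T$ (hence $T_j(\Lambda)=\lambda_j(T,\Lambda)\cdot T$) is the ``lower–left'' triangle $\{(a,b):a\ge0,\ b\ge0,\ a+b\le\lambda_j(T,\Lambda)\}$, which is closed under decreasing either coordinate. Because the argument uses only translation invariance of $\prec$ (valid for any translation vector, not just horizontal ones), it is enough to prove the horizontal statement: if $(x,y)\in S_j(\Lambda)$ with $x,y\ge0$ and $0\le x'\le x$, then $(x',y)\in S_j(\Lambda)$; the case $0\le y'\le y$ is identical after translating by $(0,y-y')$ instead of $(x-x',0)$. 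The case $x'=x$ is trivial, so set $\delta=x-x'>0$ and assume for contradiction that $(x',y)\notin S_j(\Lambda)$.

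Since $(x',y)$ has non‑negative coordinates, Lemma~\ref{basic_lem_S_j} gives $w'\prec(x',y)$ for every $w'\in W_j((x',y))$. Write $W_j((x',y))=\{(x',y)+v_1,\dots,(x',y)+v_j\}$ with distinct $v_1,\dots,v_j\in\Lambda$; each $v_i\ne(0,0)$ (else $(x',y)+v_i=(x',y)\not\prec(x',y)$), and each $(x',y)+v_i$ lies in $T_j(\Lambda)$ because $W_j\subset V_j\subset T_j(\Lambda)$. The key step is to translate this sub‑configuration rightward by $(\delta,0)$ and verify it stays inside $T_j(\Lambda)$. Writing $(x',y)+v_i=(a_i,b_i)$, we have $a_i,b_i\ge0$, and from $(a_i,b_i)\prec(x',y)$ also $a_i+b_i\le x'+y$; hence $(x,y)+v_i=(a_i+\delta,b_i)$ has non‑negative coordinates and coordinate sum $a_i+b_i+\delta\le x'+y+\delta=x+y\le\lambda_j(T,\Lambda)$, the last inequality because $(x,y)\in S_j(\Lambda)\subset T_j(\Lambda)$ by Lemma~\ref{between_T_up_and_down}. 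Thus $(x,y)+v_i\in T_j(\Lambda)$ for each $i$, and adding $(\delta,0)$ to both sides of $(x',y)+v_i\prec(x',y)$ gives $(x,y)+v_i\prec(x,y)$.

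Now the $j$ distinct points $(x,y)+v_1,\dots,(x,y)+v_j$, together with $(x,y)$ itself (which is in $T_j(\Lambda)$ and is distinct from all of them since each $v_i\ne(0,0)$), all lie in $((x,y)+\Lambda)\cap T_j(\Lambda)$, and each $(x,y)+v_i$ strictly precedes $(x,y)$ in the order $\prec$. Hence, in the $\prec$‑ordering of $((x,y)+\Lambda)\cap T_j(\Lambda)$, the point $(x,y)$ is preceded by at least $j$ elements, so $(x,y)\notin W_j(u)$ for the unique $u\in S_\Lambda$ with $u+\Lambda=(x,y)+\Lambda$; since $W_j(u')\subset u'+\Lambda$ for every $u'\in S_\Lambda$, this forces $(x,y)\notin S_j(\Lambda)$, contradicting the hypothesis. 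I expect the only delicate point to be this translation step — checking that shifting $W_j((x',y))$ by $\delta$ keeps it in $T_j(\Lambda)$ — which is precisely where the triangular shape of $T$ and the bound $a_i+b_i\le x'+y\le x+y$ coming from $\prec$‑minimality are both used; the remainder is routine bookkeeping with the total order $\prec$ and the fact that $W_j$ depends only on the coset.
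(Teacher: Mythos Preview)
Your argument is correct, and it is genuinely different from the paper's. The paper proceeds by induction on $j$: for $j=1$ it finds a single $v\in\Lambda\setminus\{0\}$ with $(x',y)+v\in S_1(\Lambda)$ and $(x',y)+v\prec(x',y)$, then translates by $(x-x',0)$ to contradict $(x,y)\in S_1(\Lambda)$; the inductive step is more involved, invoking the layer structure via Lemmas~\ref{1_fold_exact_tiling}, \ref{basic_lem1} and \ref{basic_lem3} to locate $v$ with $(x',y)+v\in S_{k+1}(\Lambda)\setminus S_k(\Lambda)$, push $(x,y)+v$ into $S_k(\Lambda)$, and then apply the inductive hypothesis to $(x',y)+v$. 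You bypass all of this by applying Lemma~\ref{basic_lem_S_j} once to produce the \emph{entire} set $W_j((x',y))$ of $j$ predecessors, and observing that the horizontal translate of each predecessor stays in $T_j(\Lambda)$ by the triangle inequality $a_i+b_i\le x'+y\le x+y\le\lambda_j(T,\Lambda)$. This gives $(x,y)$ at least $j$ strict $\prec$-predecessors inside $V_j((x,y))$, so $(x,y)\notin W_j((x,y))$, and your coset remark correctly rules out membership in any other $W_j(u')$. Your route is shorter and avoids the layer lemmas entirely; the paper's inductive approach, on the other hand, reuses the machinery (Lemmas~\ref{1_fold_exact_tiling} and \ref{basic_lem3}) that it needs anyway for the later structural results on $Z_j(\Lambda)$.
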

\begin{proof}
Assume that $x'<x$. When $j=1$, if $(x',y)\notin S_1(\Lambda)$, then there must be $v\neq(0,0)$ in $\Lambda$ such that $(x',y)+v\in S_1(\Lambda)$ and $(x',y)+v\prec (x',y)$. Hence $(x,y)+v=(x',y)+v+(x-x',0)\prec (x',y)+(x-x',0)=(x,y)$. This impiles that $(x,y)\notin S_1(\Lambda)$. This is a contradiction, and hence $(x',y)\in S_1(\Lambda)$, for all $0\leq x'\leq x$. By the similar reason, we have $(x,y')\in S_1(\Lambda)$, for all $0\leq y'\leq y$.

\begin{figure}[!ht]
  \centering
    \includegraphics[scale=.80]{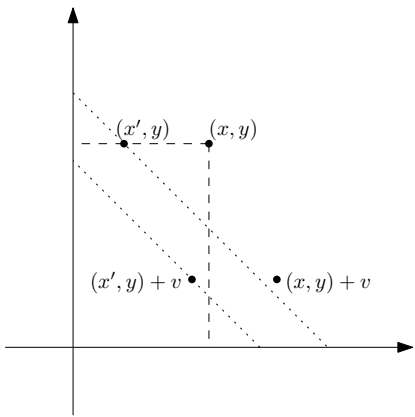}
   \caption{$(x,y)$ and $(x,y)+v$}\label{xyv}
\end{figure}

Now we assume that the lemma is true for $j=k$. We may suppose that $(x,y)\in S_{k+1}(\Lambda)\setminus S_k(\Lambda)$. If there exists $0\leq x'<x$ such that $(x',y)\notin S_{k+1}(\Lambda)$, then by Lemma \ref{1_fold_exact_tiling} we have that there must be $v\neq(0,0)$ in $\Lambda$ such that $(x',y)+v\in S_{k+1}(\Lambda)\setminus S_k(\Lambda)$ and hence $(x',y)+v\prec (x',y)$. Therefore, $(x,y)+v\prec (x,y)$. Since $(x,y)\in S_{k+1}(\Lambda)\setminus S_k(\Lambda)$, from Lemma \ref{basic_lem1} and Lemma \ref{basic_lem3}, we know that $(x,y)+v\in S_k(\Lambda)$. By the inductive hypothesis, we have that $(x',y)+v\in S_k(\Lambda)$. This is a contradiction.
\end{proof}

We call a set $S$ a \emph{half open r-stair polygon} if there are $x_0<x_1<\cdots< x_{r+1}$ and $y_0>y_1>\cdots > y_r>y_{r+1}$ such that
$$S=\bigcup_{i=0}^r[x_i,x_{i+1})\times[y_{r+1},y_i)$$

\begin{figure}[!ht]
  \centering
    \includegraphics[scale=1]{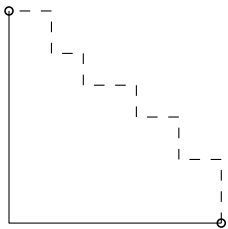}
   \caption{a half open $4$-stair polygon}
\end{figure}

\begin{lem}\label{S_is_stair}
$S_j(\Lambda)$ is a half open stair polygon.
\end{lem}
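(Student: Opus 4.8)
The plan is to describe $S_j(\Lambda)$ explicitly as the sublevel set of a monotone integer-valued counting function and then read off the stair structure from the monotonicity. Write $v=(v_1,v_2)$ for a lattice point and $l=\lambda_j(T,\Lambda)$, so $T_j(\Lambda)=\{(a,b):a,b\ge 0,\ a+b\le l\}$. Unwinding the definitions of $V_j$ and $W_j$: a point $p=(x,y)$ lies in $S_j(\Lambda)$ iff $p\in T_j(\Lambda)$ and $p$ is one of the $j$ $\prec$-smallest elements of $(p+\Lambda)\cap T_j(\Lambda)$ (this set is finite since $T_j(\Lambda)$ is bounded, has at least $j$ elements since $T_j(\Lambda)+\Lambda$ is a $j$-fold covering, and $\prec$ is a strict total order on it). Equivalently, $p\in T_j(\Lambda)$ and
\[
N(x,y):=card\{v\in\Lambda:\ v\prec(0,0),\ x+v_1\ge 0,\ y+v_2\ge 0,\ (x+v_1)+(y+v_2)\le l\}\ \le\ j-1 .
\]
The first observation I would make is that the last inequality is redundant: $v\prec(0,0)$ forces $v_1+v_2\le 0$, hence $(x+v_1)+(y+v_2)=(x+y)+(v_1+v_2)\le l$. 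So
\[
N(x,y)=card\{v\in\Lambda:\ v\prec(0,0),\ v_1\ge -x,\ v_2\ge -y\},
\]
which no longer involves the hypotenuse of $T_j(\Lambda)$ and is clearly non-decreasing in each of $x$ and $y$.

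Next I would show that the triangle $T_j(\Lambda)$ can be dropped from the description. Put $A_j=\{(x,y):x\ge 0,\ y\ge 0,\ N(x,y)\le j-1\}$; by the above $S_j(\Lambda)=T_j(\Lambda)\cap A_j$, so it suffices to prove $A_j\subseteq T_j(\Lambda)$, i.e. that $x+y>l$ implies $N(x,y)\ge j$. For this I use that $T_j(\Lambda)+\Lambda$ is a $j$-fold covering: $(x,y)$ lies in at least $j$ of the translates $T_j(\Lambda)+w$ ($w\in\Lambda$), and the condition $(x,y)-w\in T_j(\Lambda)$ becomes, on setting $v=-w$, $v_1\ge -x$, $v_2\ge -y$ and $v_1+v_2\le l-(x+y)$. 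When $x+y>l$ the last bound is negative, so every such $v$ has $v_1+v_2<0$, hence $v\prec(0,0)$, hence it is counted by $N(x,y)$; therefore $N(x,y)\ge j$ and $S_j(\Lambda)=A_j$.

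Finally I would extract the stair shape of $A_j$. It is a down-set of the closed first quadrant (immediate from the monotonicity of $N$, or directly from Lemma \ref{left_and_under_must_belong}); it is nonempty with positive measure because $Int(T^j(\Lambda))\subseteq S_j(\Lambda)=A_j$ and $N(0,0)=0$ (a lattice point counted by $N(0,0)$ would have $v\prec(0,0)$ and $v_1,v_2\ge 0$, which is impossible); and it is bounded because $A_j\subseteq T_j(\Lambda)$. On the square $[0,l]^2$ the value $N(x,y)$ depends only on the finite set $F=\Lambda\cap[-l,l]^2\cap\{v\prec(0,0)\}$, so $N$ is an integer step function whose jumps occur only along the finitely many vertical and horizontal lines $x=-v_1$, $y=-v_2$ ($v\in F$), and $N$ is right-continuous in each variable (on the line $x=-v_1$ the point $v$ is already counted, just as it is immediately to the right). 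Hence $A_j$ is a finite union of half-open axis-parallel cells $[a,b)\times[c,d)$, and a bounded down-set containing the origin which is such a finite union is exactly of the form $\bigcup_{i=0}^{r}[x_i,x_{i+1})\times[0,y_i)$ with $0=x_0<x_1<\cdots<x_{r+1}$ and $y_0>y_1>\cdots>y_r>0$, i.e. a half-open $r$-stair polygon (with $x_0=0$, $y_{r+1}=0$).

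The step I expect to be the real obstacle is the middle one, $A_j\subseteq T_j(\Lambda)$: without it the intersection $T_j(\Lambda)\cap A_j$ could in principle inherit a slanted boundary segment from the hypotenuse of $T_j(\Lambda)$, and the down-set property of Lemma \ref{left_and_under_must_belong} alone does not forbid that (a solid right triangle is itself a down-set). It is exactly here that one must use that $l=\lambda_j(T,\Lambda)$ is large enough for $T_j(\Lambda)+\Lambda$ to be a genuine $j$-fold covering. Everything else is routine bookkeeping about monotone integer step functions and half-open intervals.
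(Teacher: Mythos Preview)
Your argument is correct and takes a genuinely different route from the paper. The paper proceeds by induction on $j$: it uses Lemma~\ref{left_and_under_must_belong} to see that $S_1(\Lambda)$ is a down-set and Lemma~\ref{1_fold_exact_tiling} to see it is an exact lattice tile, then asserts (with reference to a figure) that a bounded down-set which is a lattice fundamental domain must be a half-open stair polygon; the inductive step adds the exact-tile layer $S_{j+1}(\Lambda)\setminus S_j(\Lambda)$ on top of a stair polygon and appeals to the same two lemmas again. Your approach instead identifies $S_j(\Lambda)$ explicitly as the sublevel set $\{N\le j-1\}$ of a coordinatewise nondecreasing, right-continuous, integer-valued step function on the first quadrant, and reads the stair structure off directly from that. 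The covering property enters for you only to force $A_j\subseteq T_j(\Lambda)$ (equivalently, boundedness), whereas in the paper the tiling Lemmas~\ref{1_fold_exact_tiling} and~\ref{j_fold_exact_tiling} are what drive the structure. Your version is more self-contained and makes the half-open convention and the base point $x_0=y_{r+1}=0$ fall out automatically; the paper's version is terser but leans on a ``not hard to see'' step about down-set tiles that is never spelled out. One small bonus of your description $S_j(\Lambda)=\{N\le j-1\}$ is that the nesting $S_j(\Lambda)\subset S_{j+1}(\Lambda)$ of Lemma~\ref{S_j_subset_S_j_1} becomes immediate, rather than an input.
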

\begin{proof}
From Lemma \ref{left_and_under_must_belong}, we know that $S_1(\Lambda)$ must be in the shape as shown in Figure \ref{shape}. Furthermore, by Lemma \ref{1_fold_exact_tiling}, we have that $S_1(\Lambda)$ is an exact tile. Hence, it is not hard to see that $S_1(\Lambda)$ must be a half open stair polygon.
Note that $S_j(\Lambda)\subset S_{j+1}(\Lambda)$, by using mathematical induction on $j$ and Lemma \ref{1_fold_exact_tiling}, one can easily obtain the result.
\end{proof}
\begin{figure}[!ht]
  \centering
    \includegraphics[scale=.60]{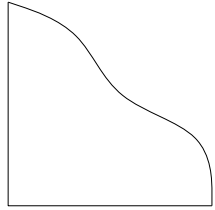}
   \caption{a possible shape of $S_1(\Lambda)$}\label{shape}
\end{figure}

From Lemma \ref{S_is_stair}, we may assume that
$$S_j(\Lambda)=\bigcup_{i=0}^{r_j}[x_i^{(j)},x_{i+1}^{(j)})\times[0,y_i^{(j)}),$$
where $0=x_0^{(j)}<x_1^{(j)}<\cdots< x_{r_i+1}^{(j)}$ and $y_0^{(j)}>y_1^{(j)}>\cdots > y_{r_i}^{(j)}>0$. Let
$$Z^*_j(\Lambda)=\{(0,y_0^{(j)}),(x_{r+1}^{(j)},0)\},$$
and
$$Z_j(\Lambda)=\{(x_i^{(j)},y_i^{(j)}) : i=1,\ldots,r_j\}.$$

\begin{figure}[!ht]
  \centering
    \includegraphics[scale=1]{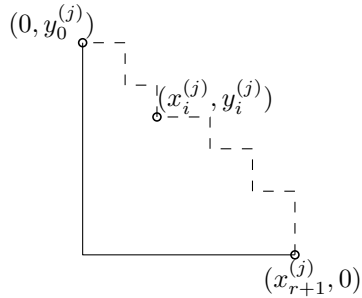}
   \caption{$S_j(\Lambda)$}
\end{figure}

\begin{figure}[!ht]
  \centering
    \includegraphics[scale=.80]{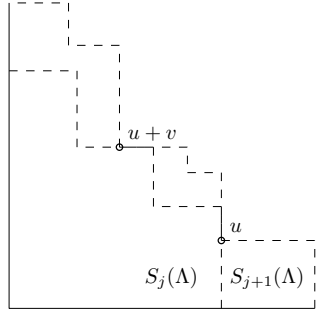}
   \caption{$S_j(\Lambda)$ and $S_{j+1}(\Lambda)$}
\end{figure}

\begin{lem}\label{cannot_more_than_one_u}
Let $v\neq(0,0)$ be a lattice point in $\Lambda$ and $u$ is a point in $\mathbb{R}^2$. If $u$ and $u+v$ both are in $Z_{j+1}(\Lambda)$, then $u\in Z_j(\Lambda)$ or $u+v\in  Z_j(\Lambda)$.
\end{lem}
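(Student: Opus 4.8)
The plan is to prove the slightly stronger statement that the $\prec$-smaller of the two points $u,u+v$ always lies in $Z_j(\Lambda)$. Since $v\ne(0,0)$ the two points are distinct, so one of them is $\prec$ the other; after possibly interchanging $u$ with $u+v$ (and $v$ with $-v$) I may assume $u\prec u+v$, so that $(0,0)\prec v$ by the translation invariance of $\prec$ recorded in the Remark. Write $p:=u$ and $p':=u+v$, so $p,p'\in Z_{j+1}(\Lambda)$ and $p\prec p'$; the goal becomes to show $p\in Z_j(\Lambda)$.

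First I would read off the local shape of a re-entrant corner from the half-open stair description of $S_{j+1}(\Lambda)$ (Lemma \ref{S_is_stair}): every point of $Z_{j+1}(\Lambda)$ is of the form $(x_i^{(j+1)},y_i^{(j+1)})$ with $1\le i\le r_{j+1}$, so it has strictly positive coordinates, it does not belong to $S_{j+1}(\Lambda)$, and for every sufficiently small $\varepsilon>0$ both $(x_i^{(j+1)}-\varepsilon,y_i^{(j+1)})$ and $(x_i^{(j+1)},y_i^{(j+1)}-\varepsilon)$ do belong to $S_{j+1}(\Lambda)$. Applying this to $p$ and to $p'$ and fixing one small $\varepsilon>0$, I get that $p-(\varepsilon,0)$, $p-(0,\varepsilon)$, $p'-(\varepsilon,0)=(p-(\varepsilon,0))+v$ and $p'-(0,\varepsilon)=(p-(0,\varepsilon))+v$ all lie in $S_{j+1}(\Lambda)$, while $p\notin S_{j+1}(\Lambda)\supseteq S_j(\Lambda)$ (using Lemma \ref{S_j_subset_S_j_1} for the inclusion), so $p\notin S_j(\Lambda)$.

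The key step is to show $p-(\varepsilon,0)\in S_j(\Lambda)$ and $p-(0,\varepsilon)\in S_j(\Lambda)$. Suppose not, say $p-(\varepsilon,0)\notin S_j(\Lambda)$; since $p-(\varepsilon,0)\in S_{j+1}(\Lambda)$ this puts $p-(\varepsilon,0)\in S_{j+1}(\Lambda)\setminus S_j(\Lambda)$, and because $(0,0)\prec v$ we have $p-(\varepsilon,0)\prec(p-(\varepsilon,0))+v$. Lemma \ref{basic_lem2} then forces $(p-(\varepsilon,0))+v=p'-(\varepsilon,0)\notin S_{j+1}(\Lambda)$, contradicting the previous paragraph; the identical argument with $(0,\varepsilon)$ in place of $(\varepsilon,0)$ handles the other point. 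Hence $p-(\varepsilon,0),\,p-(0,\varepsilon)\in S_j(\Lambda)$ for all small $\varepsilon>0$. Finally I would translate this back into the stair description $S_j(\Lambda)=\bigcup_{k=0}^{r_j}[x_k^{(j)},x_{k+1}^{(j)})\times[0,y_k^{(j)})$: by Lemma \ref{left_and_under_must_belong} the conditions just established say that $S_j(\Lambda)$ contains the segment $[0,a)\times\{b\}$ and the segment $\{a\}\times[0,b)$ but not the point $p=(a,b)$, where $a,b>0$; a short check against the stair description shows this is possible only when $a=x_k^{(j)}$ and $b=y_k^{(j)}$ for some $1\le k\le r_j$, that is, $p\in Z_j(\Lambda)$ (and $p\notin Z_j^*(\Lambda)$ since both coordinates of $p$ are positive).

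I expect the only delicate point to be that last step — verifying that "$p\notin S_j(\Lambda)$ together with a small leftward and a small downward translate of $p$ lying in $S_j(\Lambda)$" really does force $p$ to be one of the re-entrant corners $(x_k^{(j)},y_k^{(j)})$; this is a routine but slightly fiddly consequence of Lemmas \ref{left_and_under_must_belong} and \ref{S_is_stair}. The one genuinely non-mechanical idea is to apply Lemma \ref{basic_lem2} to the perturbed point $p-(\varepsilon,0)$ (and $p-(0,\varepsilon)$) rather than to $p$ itself: this is exactly what prevents those perturbations from escaping $S_j(\Lambda)$, and once that is in hand the rest is bookkeeping with the staircase.
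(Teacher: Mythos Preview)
Your proof is correct and follows essentially the same route as the paper: both arguments perturb the re-entrant corners slightly into $S_{j+1}(\Lambda)$ and compare membership in $S_j(\Lambda)$ versus $S_{j+1}(\Lambda)$ for the two $\Lambda$-translates. The only cosmetic difference is that you fix the $\prec$-order first and prove directly, via Lemma~\ref{basic_lem2}, the stronger claim that the $\prec$-smaller point lies in $Z_j(\Lambda)$, whereas the paper assumes $u\notin Z_j(\Lambda)$, derives $u+v\prec u$ via Lemma~\ref{basic_lem3}, and then argues symmetrically---these are contrapositive formulations of the same idea.
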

\begin{proof}
Assume that $u\notin Z_j(\Lambda)$. Without loss of generality, we may assume that there exists $\varepsilon>0$ such that $u-(0,\varepsilon')\in S_{j+1}(\Lambda)\setminus S_j(\Lambda)$, for all $0<\varepsilon'<\varepsilon$. Since $u+v\in Z_{j+1}(\Lambda)$, there must exist $0<\varepsilon_0<\varepsilon$ such that $u+v-(0,\varepsilon_0)\in S_{j+1}(\Lambda)$.
From Lemma \ref{basic_lem3}, since $u-(0,\varepsilon_0)\in S_{j+1}(\Lambda)\setminus S_j(\Lambda)$, we have that $u+v-(0,\varepsilon_0)\in S_{j}(\Lambda)$.
These can be deduced that $u+v\prec u$.
Similarly, if $u+v\notin Z_j(\Lambda)$, then $u\prec u+v$. It is obvious that $u\prec u+v$ and $u+v\prec u$ cannot occur simultaneously. Hence, we have that $u\in Z_j(\Lambda)$ or $u+v\in  Z_j(\Lambda)$.
\end{proof}

\begin{lem}\label{u_j_1_must_corres_with_u_j}
For every $u\in Z_{j+1}(\Lambda)\setminus Z_j(\Lambda)$, there exists a unique lattice point $v\neq(0,0)$ in $\Lambda$ such that $u+v\in (Z^*_j(\Lambda)\cup Z_{j}(\Lambda))\setminus Z_{j+1}(\Lambda)$.
\end{lem}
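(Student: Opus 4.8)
The plan is to follow the template of the proof of Lemma~\ref{cannot_more_than_one_u}: choose a canonical direction in which to approach $u$, read off the partner point from the exact $(j+1)$-fold tiling, and then recognise it as a corner of the stair polygon $S_j(\Lambda)$.

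So fix $u\in Z_{j+1}(\Lambda)\setminus Z_j(\Lambda)$. Being a reentrant vertex of the stair polygon $S_{j+1}(\Lambda)$ (Lemma~\ref{S_is_stair}), $u$ has positive coordinates, $u\notin S_{j+1}(\Lambda)\supseteq S_j(\Lambda)$, and both $u-(0,\varepsilon)$ and $u-(\varepsilon,0)$ lie in $S_{j+1}(\Lambda)$ for all sufficiently small $\varepsilon>0$. Since $u$ is \emph{not} a reentrant vertex of $S_j(\Lambda)$, Lemma~\ref{left_and_under_must_belong} (together with the stair shape of $S_j(\Lambda)$) forbids $u-(0,\varepsilon)$ and $u-(\varepsilon,0)$ from both lying in $S_j(\Lambda)$ for small $\varepsilon$; exchanging the roles of the two coordinate directions if necessary, I may assume from now on that $u-(0,\varepsilon)\in S_{j+1}(\Lambda)\setminus S_j(\Lambda)$ for all small $\varepsilon>0$, the other case being handled the same way. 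By Lemma~\ref{j_fold_exact_tiling} the coset $u+\Lambda$ meets $S_{j+1}(\Lambda)$ in exactly $j+1$ points and $S_j(\Lambda)$ in exactly $j$ of them, and since $u\notin S_{j+1}(\Lambda)$ there is a \emph{unique} lattice point $v\neq(0,0)$ with $u+v\in S_{j+1}(\Lambda)\setminus S_j(\Lambda)$. This $v$ is my candidate; since $u+v\in S_{j+1}(\Lambda)$ it is not a reentrant vertex of $S_{j+1}(\Lambda)$, so $u+v\notin Z_{j+1}(\Lambda)$ automatically, and it remains to show that $u+v\in Z_j^*(\Lambda)\cup Z_j(\Lambda)$ and that no other translate of $u$ lies in that set but outside $Z_{j+1}(\Lambda)$.

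To see that $u+v$ is a corner of $S_j(\Lambda)$, I will show that it is reached by points of $S_j(\Lambda)$ both from directly below and (when its first coordinate is positive) from directly to the left. Indeed, by Lemma~\ref{left_and_under_must_belong} the points $(u+v)-(0,\varepsilon)$ and, for small $\varepsilon$, $(u+v)-(\varepsilon,0)$ again lie in $S_{j+1}(\Lambda)$; writing them as $(u-(0,\varepsilon))+v$ and $(u-(\varepsilon,0))+v$ and applying Lemma~\ref{basic_lem3} --- or, in the subcase $u-(\varepsilon,0)\in S_j(\Lambda)$, Lemma~\ref{basic_lem1}, whose order and non-negativity hypotheses are readily checked for small $\varepsilon$ --- puts these points into $S_j(\Lambda)$. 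Since $u+v\in\overline{S_j(\Lambda)}\setminus S_j(\Lambda)$ is then approached from below and from the left, the stair description of $S_j(\Lambda)$ pins it down to a reentrant vertex, i.e.\ $u+v\in Z_j(\Lambda)$; and in the degenerate cases where $u+v$ lies on a coordinate axis the same estimates force $u+v$ to be an endpoint of the $S_j(\Lambda)$-staircase, i.e.\ $u+v\in Z_j^*(\Lambda)$.

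For uniqueness, suppose $u+v'\in(Z_j^*(\Lambda)\cup Z_j(\Lambda))\setminus Z_{j+1}(\Lambda)$ with $v'\neq(0,0)$. If $u+v'\in Z_j(\Lambda)$, then locally $S_j(\Lambda)$ occupies three quarters of a neighbourhood of $u+v'$; since $S_j(\Lambda)\subseteq S_{j+1}(\Lambda)$ and $S_{j+1}(\Lambda)$ is a stair polygon, the only way $u+v'$ can fail to belong to $S_{j+1}(\Lambda)$ is for it to be a reentrant vertex of $S_{j+1}(\Lambda)$ as well, which is excluded; hence $u+v'\in S_{j+1}(\Lambda)\setminus S_j(\Lambda)$, and so $v'=v$ by the uniqueness above. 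The only remaining possibility is that $u+v'$ is an endpoint of the $S_j(\Lambda)$-staircase that is simultaneously an endpoint of the $S_{j+1}(\Lambda)$-staircase and a $\Lambda$-translate of $u$; I intend to exclude this by an order argument in the style of Lemma~\ref{cannot_more_than_one_u}, applying Lemmas~\ref{basic_lem2} and \ref{basic_lem_S_j} to the points $u-(0,\varepsilon)$, $(u+v)-(0,\varepsilon)$ and $(u+v')-(0,\varepsilon)$ to produce incompatible relations for the order $\prec$. I expect this last, axis-endpoint case to be the main obstacle: away from the axes the dichotomy ``a reentrant vertex of $S_j(\Lambda)$ either lies in $S_{j+1}(\Lambda)$ or is itself a reentrant vertex of $S_{j+1}(\Lambda)$'' is clean, but at a staircase endpoint it fails, and one must return to the defining order $\prec$ of $S_j(\Lambda)$ to rule out a spurious second translate of $u$.
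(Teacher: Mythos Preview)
Your candidate $v$ is exactly the one the paper picks: the unique translate with $u+v\in S_{j+1}(\Lambda)\setminus S_j(\Lambda)$ (which exists by Lemma~\ref{S_j_subset_S_j_1}, and is nonzero since $u\notin S_{j+1}(\Lambda)$). From there, however, the two arguments diverge.

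For the \emph{existence} step ``$u+v\in Z^*_j(\Lambda)\cup Z_j(\Lambda)$'', the paper does not split into directions and subcases as you do. It first records $u+v\prec u$ (since $u\notin S_{j+1}(\Lambda)$, Lemma~\ref{basic_lem_S_j} gives $u'\prec u$ for every $u'\in W_{j+1}(u)$, and $u+v$ is one of them). Then it argues by contradiction: if $u+v$ were \emph{not} a corner of $S_j(\Lambda)$, then for one coordinate direction and all small $\varepsilon>0$ the point $u+v-(0,\varepsilon)$ (say) would still lie in $S_{j+1}(\Lambda)\setminus S_j(\Lambda)$; since $u+v-(0,\varepsilon)\prec u-(0,\varepsilon)$, Lemma~\ref{basic_lem2} forces $u-(0,\varepsilon)\notin S_{j+1}(\Lambda)$, contradicting $u\in Z_{j+1}(\Lambda)$. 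Your constructive route via Lemmas~\ref{basic_lem3} and~\ref{basic_lem1} also works, but it costs you the WLOG reduction on the side of $u$, a case split for the second direction, and an implicit verification of $u+v\prec u$ (which you label ``readily checked''); the paper's contradiction trick with Lemma~\ref{basic_lem2} avoids all of that in one stroke.

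For the \emph{uniqueness}, you are doing more than the paper does. The paper's ``unique'' is simply the uniqueness of $v$ with $u+v\in S_{j+1}(\Lambda)\setminus S_j(\Lambda)$, which is immediate from Lemma~\ref{S_j_subset_S_j_1}; the paper does not attempt to show that no \emph{other} $v'$ could land $u+v'$ in $(Z^*_j(\Lambda)\cup Z_j(\Lambda))\setminus Z_{j+1}(\Lambda)$. Your argument for $u+v'\in Z_j(\Lambda)$ (forcing $u+v'\in S_{j+1}(\Lambda)\setminus S_j(\Lambda)$, hence $v'=v$) is correct and nice, but the ``main obstacle'' you anticipate in the $Z^*_j(\Lambda)$ case is self-inflicted: that stronger uniqueness is never used. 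In the only application (Lemma~\ref{Z_not_more_that_2j_1}) one just needs a well-defined map $u\mapsto u+v$ into $(Z^*_j(\Lambda)\cup Z_j(\Lambda))\setminus Z_{j+1}(\Lambda)$, and its injectivity, which comes from Lemma~\ref{cannot_more_than_one_u}. So you can drop the incomplete axis-endpoint discussion without loss.
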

\begin{proof}
It is clear that $u\notin S_{j+1}(\Lambda)$, and hence there is a unique lattice point $v\neq(0,0)$ in $\Lambda$ such that $u+v\in S_{j+1}(\Lambda)\setminus S_j(\Lambda)$ and $u+v\prec u$. Obviously, $u+v\notin Z_{j+1}(\Lambda)$. If $u+v\notin Z^*_j(\Lambda)\cup Z_{j}(\Lambda)$, then we may assume, without loss of generality, that there exists $\varepsilon>0$ such that $u+v-(0,\varepsilon')\in S_{j+1}(\Lambda)\setminus S_j(\Lambda)$, for all $0<\varepsilon'<\varepsilon$.  Since $u+v\prec u$, we know that for every $0<\varepsilon'<\varepsilon$, $u+v-(0,\varepsilon')\prec u-(0,\varepsilon')$. From Lemma \ref{basic_lem2}, we have that $u-(0,\varepsilon')\notin S_{j+1}(\Lambda)$ for every $0<\varepsilon'<\varepsilon$. This is impossible, since $u\in Z_{j+1}(\Lambda)$. Hence $u+v\in (Z^*_j(\Lambda)\cup Z_{j}(\Lambda))\setminus Z_{j+1}(\Lambda)$.
\end{proof}

\begin{lem}\label{Z_not_more_that_2j_1}
$card\{Z_j(\Lambda)\}\leq 2j-1$.
\end{lem}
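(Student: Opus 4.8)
The plan is to prove the bound $card\{Z_j(\Lambda)\}\le 2j-1$ by induction on $j$, writing $r_j=card\{Z_j(\Lambda)\}$. The engine of the proof is the inductive step, which I will establish in the sharper form $r_{j+1}\le r_j+2$; together with the base estimate $r_1\le 1$ this gives $r_j\le 2j-1$ for every $j\ge 1$.

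For the inductive step I would use the two lemmas just proved. By Lemma~\ref{u_j_1_must_corres_with_u_j}, each $u\in Z_{j+1}(\Lambda)\setminus Z_j(\Lambda)$ determines a unique $v=v_u\in\Lambda\setminus\{(0,0)\}$ with $u+v_u\in\bigl(Z_j^*(\Lambda)\cup Z_j(\Lambda)\bigr)\setminus Z_{j+1}(\Lambda)$, so we have a map
$$\phi\colon Z_{j+1}(\Lambda)\setminus Z_j(\Lambda)\longrightarrow \bigl(Z_j^*(\Lambda)\cup Z_j(\Lambda)\bigr)\setminus Z_{j+1}(\Lambda),\qquad \phi(u)=u+v_u.$$
I claim $\phi$ is injective: if $\phi(u_1)=\phi(u_2)$ then $u_1-u_2=v_{u_2}-v_{u_1}\in\Lambda$, and if $u_1\ne u_2$ this is a nonzero lattice vector with $u_1$ and $u_1+(u_2-u_1)=u_2$ both in $Z_{j+1}(\Lambda)$, so Lemma~\ref{cannot_more_than_one_u} forces $u_1\in Z_j(\Lambda)$ or $u_2\in Z_j(\Lambda)$, contradicting $u_1,u_2\in Z_{j+1}(\Lambda)\setminus Z_j(\Lambda)$.

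Now I count. The two points of $Z_j^*(\Lambda)$ lie on the coordinate axes, whereas every point of $Z_j(\Lambda)$ and every point of $Z_{j+1}(\Lambda)$ has both coordinates strictly positive; hence $Z_j^*(\Lambda)$ is disjoint from $Z_j(\Lambda)$ and from $Z_{j+1}(\Lambda)$, so $card\{(Z_j^*(\Lambda)\cup Z_j(\Lambda))\setminus Z_{j+1}(\Lambda)\}=2+card\{Z_j(\Lambda)\setminus Z_{j+1}(\Lambda)\}$. Injectivity of $\phi$ then gives $card\{Z_{j+1}(\Lambda)\setminus Z_j(\Lambda)\}\le 2+card\{Z_j(\Lambda)\setminus Z_{j+1}(\Lambda)\}$; adding $card\{Z_{j+1}(\Lambda)\cap Z_j(\Lambda)\}$ to both sides and using $card\{Z_j(\Lambda)\cap Z_{j+1}(\Lambda)\}+card\{Z_j(\Lambda)\setminus Z_{j+1}(\Lambda)\}=r_j$ yields $r_{j+1}\le r_j+2\le(2j-1)+2=2(j+1)-1$, as desired.

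It remains to handle the base case $r_1\le 1$, and this is the step I expect to be the main obstacle. By Lemma~\ref{j_fold_exact_tiling} the set $S_1(\Lambda)$ is an exact tile, and by Lemma~\ref{left_and_under_must_belong} it is lower-left closed in the first quadrant. I would first show that every reflex corner $(x_i^{(1)},y_i^{(1)})$ of the stair polygon $S_1(\Lambda)$ is a lattice point: inspecting the tiling $S_1(\Lambda)+\Lambda$ near the corner, the points immediately to its SW, SE and NW all lie in $S_1(\Lambda)$ itself while the point to its NE does not, so the only translate $S_1(\Lambda)+v$ that can contain the corner point (occupying precisely the remaining NE quadrant near it) is the one with $v=(x_i^{(1)},y_i^{(1)})\in\Lambda$. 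If $S_1(\Lambda)$ had two distinct reflex corners $a=(x_1^{(1)},y_1^{(1)})$ and $b=(x_2^{(1)},y_2^{(1)})$ with $x_1^{(1)}<x_2^{(1)}$ and $y_1^{(1)}>y_2^{(1)}$, then $b-a\in\Lambda\setminus\{(0,0)\}$, and since $[0,x_2^{(1)})\times[0,y_1^{(1)})\subset S_1(\Lambda)$ and $[0,x_1^{(1)})\times[0,y_0^{(1)})\subset S_1(\Lambda)$ one checks that $S_1(\Lambda)$ and $S_1(\Lambda)+(b-a)$ overlap on a set of positive measure, contradicting the tiling property; hence $r_1\le 1$. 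Making this local corner analysis fully rigorous is the delicate part; the remaining counting is routine once the injectivity of $\phi$, which rests entirely on Lemma~\ref{cannot_more_than_one_u}, is in hand.
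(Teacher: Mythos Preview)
Your proof is correct and follows essentially the same inductive approach as the paper: the paper also handles the base case $r_1\le 1$ by appealing to the tiling property of $S_1(\Lambda)+\Lambda$ (though it merely says ``it is easy to show'' without details) and then invokes Lemmas~\ref{cannot_more_than_one_u} and~\ref{u_j_1_must_corres_with_u_j} to obtain $card\{Z_{k+1}(\Lambda)\setminus Z_k(\Lambda)\}\le card\{(Z^*_k(\Lambda)\cup Z_k(\Lambda))\setminus Z_{k+1}(\Lambda)\}$, after which the same set-arithmetic gives $r_{k+1}\le r_k+2$. Your explicit construction of the injection $\phi$, its injectivity via Lemma~\ref{cannot_more_than_one_u}, and your sketch of the base case simply flesh out steps the paper leaves to the reader.
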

\begin{proof}
When $j=1$, since $S_1(\Lambda)+\Lambda$ is a tiling of $\mathbb{R}^2$, it is easy to show that $card\{(Z_1(\Lambda))\}\leq 1$. Now assume that $card\{Z_k(\Lambda)\}\leq 2k-1$. From Lemma \ref{cannot_more_than_one_u} and Lemma \ref{u_j_1_must_corres_with_u_j}, one can deduce that
$$card\{Z_{k+1}(\Lambda)\setminus Z_k(\Lambda)\}\leq card\{(Z^*_k(\Lambda)\cup Z_{k}(\Lambda))\setminus Z_{k+1}(\Lambda)\}.$$
We note that
$$card\{Z_{k+1}(\Lambda)\}=card\{Z_{k+1}(\Lambda)\setminus Z_k(\Lambda)\}+card\{Z_{k+1}(\Lambda)\cap Z_k(\Lambda)\},$$
and
\begin{align*}
  card\{Z^*_k(\Lambda)\cup Z_{k}(\Lambda)\}  =& card\{(Z^*_k(\Lambda)\cup Z_{k}(\Lambda))\setminus Z_{k+1}(\Lambda)\}      \\
             & +card\{Z_{k+1}(\Lambda)\cap Z_k(\Lambda)\}.
\end{align*}
Hence
$$card\{Z_{k+1}(\Lambda)\}\leq card\{Z^*_k(\Lambda)\cup Z_{k}(\Lambda)\} \leq 2+2k-1=2(k+1)-1.$$
\end{proof}

Denote by $\mathcal{S}_j$ the collection of half open $r$-stair polygons $S$ contained in T which $r\leq 2j-1$ and $S$ is an exact $j$-fold tile. Denote by $\mathcal{S}^j$ the collection of half open $r$-stair polygons $S$ such that $Int(T)\subset S$, $r\leq 2j-1$ and $S$ is an exact $j$-fold tile. Let $A_j$ denote the maximum area of polygons in $\mathcal{S}_j$ and let $A^j$ denote the minimum area of polygons in $\mathcal{S}^j$.

For any given $S\in\mathcal{S}_j$, suppose that $S+\Lambda$ is a $j$-fold lattice tiling of $\mathbb{R}^2$. Since $S\subset T$, it is easy to see that $T+\Lambda$ is a $j$-fold lattice covering of $\mathbb{R}^2$. Clearly, the density of $T+\Lambda$ is $\frac{|T|}{d(\Lambda)}=\frac{j|T|}{|S|}$. Hence
$$\vartheta_L^j(T)\leq \frac{j|T|}{|S|},$$
for all $S\in\mathcal{S}_i$. Therefore,
$$\vartheta_L^j(T)\leq \frac{j|T|}{A_j}.$$
Similarly, one can show that
$$\delta_L^j(T)\geq \frac{j|T|}{A^j}.$$

For any given lattice $\Lambda$, by the definition of $\lambda_j(T,\Lambda)$, Lemma \ref{between_T_up_and_down}, Lemma \ref{j_fold_exact_tiling} , Lemma \ref{S_is_stair} and Lemma \ref{Z_not_more_that_2j_1}, we know that $\frac{1}{\lambda_j(T,\Lambda)}S_j(\Lambda)\in \mathcal{S}_j$. From (\ref{theta_lattice_density}), we can obtain
$$\vartheta_L^j(T)=\min_{\Lambda}\frac{|T_j(\Lambda)|}{d(\Lambda)}=\min_{\Lambda}\frac{j|T_j(\Lambda)|}{|S_j(\Lambda)|}
=\min_{\Lambda}\frac{j|T|}{\left|\frac{1}{\lambda_j(T,\Lambda)}S_j(\Lambda)\right|}\geq \min_{S\in \mathcal{S}_j} \frac{j|T|}{|S|}=\frac{j|T|}{A_j}
$$
here, the minima are over all lattices $\Lambda$. Hence
\begin{equation}\label{j_fold_theta_and_area}
\vartheta_L^j(T)=\frac{j|T|}{A_j}.
\end{equation}
Similarly, one can show that
\begin{equation}\label{j_fold_delta_and_area}
\delta_L^j(T)=\frac{j|T|}{A^j}.
\end{equation}

\section{$j$-Fold Tiling with Stair Polygon}
Let $S(j)$ be a half open $(2j-1)$-stair polygon defined by
$$S(j)=\bigcup_{i=0}^{2j-1}[i,i+1)\times[0,2j-i).$$
In this section, we will prove the following result.

\begin{figure}[!ht]
  \centering
    \includegraphics[scale=.80]{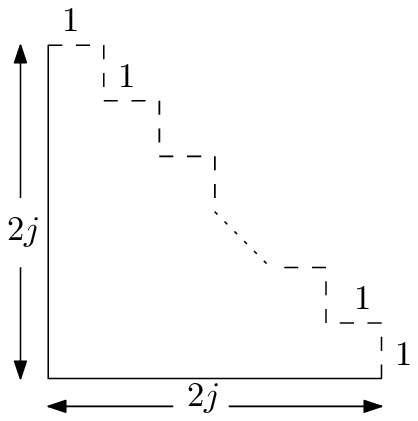}
   \caption{$S(j)$}
\end{figure}

\begin{thm}\label{optimal_lattice_of_S_j}
$S(j)$ is an exact $j$-fold tile. Furthermore, $S(j)+\Lambda$ is an exact $j$-fold lattice tiling of $\mathbb{R}^2$ if and only if there exists an integer $1\leq m\leq 2j+1$ such that $\gcd(m,2j+1)=\gcd(m+1,2j+1)=1$ and $\Lambda$ is generated by $(1,m)$ and $(0,2j+1)$.
\end{thm}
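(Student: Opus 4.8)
The plan is to reduce the tiling condition to a combinatorial statement about residues modulo $2j+1$, and then analyse it. Since $|S(j)|=\sum_{i=0}^{2j-1}(2j-i)=j(2j+1)$, an exact $j$-fold lattice tiling $S(j)+\Lambda$ forces $d(\Lambda)=2j+1$; and $S(j)+\Lambda$ is an exact $j$-fold tiling exactly when $N_\Lambda(u):=card\{v\in\Lambda:u-v\in S(j)\}=j$ for every $u\in\mathbb{R}^2$. I would carry out the core computation for the candidate lattices $\Lambda_m$ generated by $(1,m)$ and $(0,2j+1)$ (these have determinant $2j+1$). Writing a lattice point as $v=(a,am+b(2j+1))$ with $a,b\in\mathbb{Z}$, and using that $S(j)$ has first-coordinate projection $[0,2j)$ while the column $[i,i+1)\times[0,2j-i)$ has height $2j-i<2j+1$, one finds that for $u=(x,y)$ the admissible values of $a$ are $a=\lfloor x\rfloor-k$ for $k=0,\dots,2j-1$, that such an $a$ forces column index $i=k$, and that this $a$ then contributes (with a unique $b$) precisely when $(n+km)\bmod(2j+1)\le 2j-1-k$, where $n=n(u):=\lfloor y-\lfloor x\rfloor m\rfloor\bmod(2j+1)$. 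The half-open shape of $S(j)$ is exactly what makes this identity hold at \emph{every} $u$. Consequently $N_{\Lambda_m}(u)=f_m(n(u))$, where
\begin{equation*}
f_m(n):=card\{k\in\{0,\dots,2j-1\}:(n+km)\bmod(2j+1)\le 2j-1-k\},
\end{equation*}
and since $n(u)$ attains every value of $\mathbb{Z}/(2j+1)$, the family $S(j)+\Lambda_m$ is an exact $j$-fold tiling if and only if $f_m\equiv j$.

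For the \emph{if} part I would show $f_m\equiv j$ whenever $\gcd(m,2j+1)=\gcd(m+1,2j+1)=1$. Double counting gives $\sum_{n}f_m(n)=\sum_{k=0}^{2j-1}(2j-k)=j(2j+1)$, so the mean of $f_m$ over $\mathbb{Z}/(2j+1)$ is $j$, and it suffices to prove $f_m$ is constant. Comparing $f_m(n+1)$ with $f_m(n)$ term by term — each residue $(n+km)\bmod(2j+1)$ increases by $1$ unless it wraps from $2j$ to $0$ — yields
\begin{equation*}
f_m(n+1)-f_m(n)=A_m(n)-B_m(n),
\end{equation*}
with $A_m(n)=card\{k\in\{0,\dots,2j-1\}:km\equiv-1-n\}$ and $B_m(n)=card\{k\in\{0,\dots,2j-1\}:k(m+1)\equiv-2-n\}$ (congruences modulo $2j+1$). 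Under $\gcd(m,2j+1)=1$ the congruence for $A_m(n)$ has a unique solution modulo $2j+1$, which lies in $\{0,\dots,2j-1\}$ unless it equals $2j$; since $2j\equiv-1$, that exception occurs exactly for $n\equiv m-1$. The same computation for $B_m$, using $\gcd(m+1,2j+1)=1$, produces the \emph{same} exceptional residue $n\equiv m-1$. Hence $A_m\equiv B_m$, so $f_m$ is constant, equal to $j$. In particular $m=1$ satisfies the hypotheses (as $2j+1$ is odd), so $S(j)$ is an exact $j$-fold tile.

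For the \emph{only if} part I would argue in two steps. The first step — which I expect to be the main obstacle — is to show that if $S(j)+\Lambda$ is an exact $j$-fold tiling then $\Lambda=\Lambda_m$ for some $m\in\{1,\dots,2j+1\}$; equivalently, $(0,2j+1)\in\Lambda$, the projection of $\Lambda$ to the first coordinate is $\mathbb{Z}$, and $\Lambda\subset\mathbb{Z}^2$. The idea is to exploit that $S(j)$ is a union of half-open unit squares with integer corners together with the staircase profile: restricting the $\Lambda$-translates of $S(j)$ to generic vertical and horizontal lines forces both coordinate projections of $\Lambda$ to be discrete with the correct covolume (an infinite accumulation of left endpoints would over-cover some point), after which the integer break-points of the staircase pin down the remaining degree of freedom and leave exactly the lattices $\Lambda_m$. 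The second step reverses the computation above: if $\Lambda=\Lambda_m$ with $\gcd(m,2j+1)>1$ or $\gcd(m+1,2j+1)>1$, then — since $2j+1$ is odd, any nontrivial common divisor is $\ge 3$ — a short Chinese-remainder argument produces a residue $n$ with $A_m(n)\ne B_m(n)$ (for instance, choose $n$ so that exactly one of the two congruences is solvable), so $f_m$ is non-constant and $S(j)+\Lambda_m$ is not an exact $j$-fold tiling. Combining the two steps with the \emph{if} part gives the theorem.
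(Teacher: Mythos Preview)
Your \emph{if} direction is correct and is a genuinely different argument from the paper's. The paper works geometrically: it decomposes the square $U(j)=[0,2j+1)^2$ as $S(j)\cup D(j)\cup S^*(j)$, counts lattice points of $\Lambda(m,j)$ in the auxiliary strips $B(j),C(j),D(j)$ (Lemmas~3.2--3.4), and then uses a symmetry/pairing argument between columns $k$ and $2j+1-k$ (Lemma~3.6) to get $card\{\Lambda(m,j)\cap S(j)\}=j+1-\gcd(m+1,2j+1)$, finally bootstrapping to all real translates (Lemmas~3.5, 3.7, 3.8). Your route---reduce $N_{\Lambda_m}(u)$ to the residue function $f_m$, observe its mean is $j$, and show it is constant by the telescoping identity $f_m(n{+}1)-f_m(n)=A_m(n)-B_m(n)$ with both sides sharing the single exception $n\equiv m-1$---is cleaner and immediately yields the quantitative dependence on $\gcd$'s. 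The same mechanism also handles your second step of the \emph{only if} direction (non-constancy of $f_m$ when a $\gcd$ exceeds $1$); the CRT argument you sketch does work because $\gcd(m,2j+1)$ and $\gcd(m+1,2j+1)$ are coprime and, $2j+1$ being odd, any nontrivial divisor is at least $3$.

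The genuine gap is your first step of the \emph{only if} direction: showing that any exact $j$-fold lattice tiling by $S(j)$ forces $\Lambda=\Lambda_m$. Your stated mechanism---``an infinite accumulation of left endpoints would over-cover some point''---does not apply: $\Lambda$ is a lattice, hence discrete, so every bounded region meets only finitely many translates regardless of whether $\pi_x(\Lambda)$ is dense or not. Discreteness of $\Lambda$ is automatic; what must be proved is the much sharper statement that $\Lambda\subset\mathbb{Z}^2$ with $\pi_x(\Lambda)=\mathbb{Z}$ and $(0,2j+1)\in\Lambda$. The paper does this by a local analysis at the outer corners of $S(j)$: Lemma~3.9 uses the corners $(1,2j-1)$ and $(2j-1,1)$ to produce lattice vectors of the form $(1,y)$ and $(x,1)$; Lemma~3.10 leverages $d(\Lambda)=2j+1$ to bound any axial vector; and Lemma~3.11 analyses the three-way corner at $(0,2j)$ to force integer lattice vectors $(-s,s-1),(t-1,-t)$ with $s,t\in\{1,\dots,2j\}$, after which a short determinant computation pins down $\Lambda=\Lambda_m$. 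Your phrase ``the integer break-points of the staircase pin down the remaining degree of freedom'' gestures at exactly this, but the concrete corner arguments (and in particular the case analysis at $(0,2j)$ showing some translate must place a re-entrant vertex there) are the substance of the proof and are missing from your outline.
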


Let
$$S^*(j)=\bigcup_{i=1}^{2j}[i,i+1)\times [2j+1-i,2j+1),$$
and
$$D(j)=\bigcup_{i=0}^{2j}[i,i+1)\times[2j-i,2j+1-i).$$
Denote by $U(j)$ the set $[0,2j+1)\times [0,2j+1)$.  Clearly, $S(j)$, $S^*(j)$ and $D(j)$ are mutually disjoint, and
$$U(j)=S(j)\cup D(j)\cup S^*(j).$$
Let
$$B(j)=[0,1)\times[0,2j+1),$$
and
$$C(j)=[0,2j+1)\times[0,1).$$

\begin{figure}[!ht]
  \centering
    \includegraphics[scale=.80]{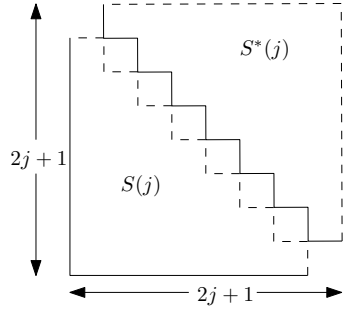}
   \caption{$S(j)$ and $S^*(j)$}
\end{figure}

\begin{figure}[!ht]
  \centering
    \includegraphics[scale=.70]{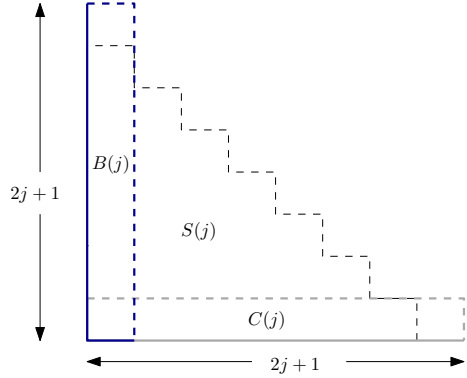}
   \caption{$B(j)$ and $C(j)$}
\end{figure}
For any given lattice $\Lambda$, from the definition, one can see that $S(j)+\Lambda$ is an exact $j$-fold tiling of $\mathbb{R}^2$ if and only if for every point $(x,y)$ in $\mathbb{R}^2$, $card\{((x,y)+\Lambda)\cap S(j)\}=j$, i.e., $card\{\Lambda\cap (-(x,y)+S(j))\}=j$. This can be interpreted as $card\{\Lambda\cap \tau(S(j))\}=j$, for all translations $\tau$. Let $m$ be a positive integer. Denote by $\Lambda(m,j)$ the lattice generated by $(1,m)$ and $(0,2j+1)$.

\begin{lem}\label{B}
$card\{\Lambda(m,j)\cap((s,t)+B(j))\}=1$, for all $(s,t)\in\mathbb{Z}^2$.
\end{lem}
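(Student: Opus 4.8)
The plan is to compute directly with the coordinates of $\Lambda(m,j)$. Every lattice point has the form $a(1,m)+b(0,2j+1)=\bigl(a,\;am+b(2j+1)\bigr)$ with $a,b\in\mathbb{Z}$, and the translate in question is $(s,t)+B(j)=[s,s+1)\times[t,t+2j+1)$. So I need to count the pairs $(a,b)\in\mathbb{Z}^2$ satisfying $s\le a<s+1$ and $t\le am+b(2j+1)<t+2j+1$.

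First I would dispose of the first coordinate. Since $s$ is an integer, the half-open interval $[s,s+1)$ contains exactly one integer, namely $s$ itself; because $a$ is an integer this forces $a=s$. Substituting $a=s$, the second condition becomes $t-sm\le b(2j+1)<t-sm+2j+1$, i.e.\ $b(2j+1)$ must lie in a half-open interval of length exactly $2j+1$. As $b$ runs over $\mathbb{Z}$ the quantities $b(2j+1)$ form an arithmetic progression with common difference $2j+1$, so precisely one value of $b$ meets this requirement. Thus there is exactly one admissible pair $(a,b)$, which gives $card\{\Lambda(m,j)\cap((s,t)+B(j))\}=1$.

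I do not expect any real obstacle here: the argument is a one-line counting fact once the lattice is written out in coordinates. The only point requiring a moment's care is the half-openness of the intervals — both ``exactly one integer in $[s,s+1)$'' and ``exactly one multiple of $2j+1$ in a length-$(2j+1)$ half-open interval'' rely on the endpoints being included on the correct side, matching the definition $B(j)=[0,1)\times[0,2j+1)$. As a sanity check, this simultaneously shows that $B(j)$ is a fundamental domain for $\Lambda(m,j)$, consistent with $|B(j)|=2j+1=d(\Lambda(m,j))$.
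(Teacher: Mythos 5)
Your proof is correct and is essentially the paper's argument in slightly different notation: the paper also forces the first coefficient to equal $s$ and then observes that exactly one of the $2j+1$ candidate heights $t+l$ makes the second coefficient an integer, which is the same counting of one multiple of $2j+1$ in a half-open interval of length $2j+1$ that you perform.
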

\begin{proof}
For any given $(s,t)\in\mathbb{Z}^2$, we determine the equation
  \begin{equation}
\left\{
\begin{aligned}
c_1\cdot 1+ c_2\cdot 0&=s\\
c_1\cdot m+c_2\cdot (2j+1)&=l+t\\
\end{aligned}
\right.
\end{equation}
where $l=0,1,\ldots,2j$. One can obtain
$$c_1=s,$$
and
$$c_2=\frac{l+t-sm}{2j+1}.$$
By elementary number theory, there exists a unique $l\in\{0,1,\ldots,2j\}$ such that $\frac{l+t-sm}{2j+1}$ is an integer.
\end{proof}

\begin{lem}\label{C}
$card\{\Lambda(m,j)\cap((s,t)+C(j))\}=gcd(m,2j+1)$, for all $(s,t)\in\mathbb{Z}^2$.
\end{lem}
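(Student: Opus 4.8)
The plan is to mirror the proof of Lemma \ref{B}, reducing the geometric count to a count of solutions of a linear congruence modulo $2j+1$. First I would observe that the translate $(s,t)+C(j)=[s,s+2j+1)\times[t,t+1)$ contains exactly the $2j+1$ integer points $(s+k,t)$ with $k\in\{0,1,\dots,2j\}$: the factor $[t,t+1)$ with $t\in\mathbb{Z}$ meets $\mathbb{Z}$ only in $t$, while $[s,s+2j+1)$ meets $\mathbb{Z}$ in $s,s+1,\dots,s+2j$. Since every point of $\Lambda(m,j)=\{(a,\,am+b(2j+1)):a,b\in\mathbb{Z}\}$ has integer coordinates, $card\{\Lambda(m,j)\cap((s,t)+C(j))\}$ is just the number of those $2j+1$ candidate points that actually lie in $\Lambda(m,j)$.

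Second, for a fixed $k$ I would solve, exactly as in Lemma \ref{B}, the system $c_1\cdot 1+c_2\cdot 0=s+k$ and $c_1 m+c_2(2j+1)=t$; it forces $c_1=s+k$ and $c_2=\frac{t-(s+k)m}{2j+1}$, so $(s+k,t)\in\Lambda(m,j)$ if and only if $(s+k)m\equiv t\pmod{2j+1}$. Hence the cardinality in question equals the number of $k\in\{0,\dots,2j\}$ with $(s+k)m\equiv t\pmod{2j+1}$. Because $s,s+1,\dots,s+2j$ run through a complete residue system modulo $2j+1$, this equals the number of residues $x$ modulo $2j+1$ solving $mx\equiv t\pmod{2j+1}$, which by the elementary theory of linear congruences is $\gcd(m,2j+1)$ whenever that congruence is solvable (in particular it is $\gcd(m,2j+1)=1$ in the coprime case that is used afterwards). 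This is the asserted value.

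I do not anticipate a serious obstacle: the whole argument is the bookkeeping already present in Lemma \ref{B} — now with $2j+1$ candidate integer points instead of a single one — together with the standard count of solutions of $ax\equiv b\pmod n$. The only points that want a little care are the treatment of the half-open interval $[t,t+1)$ (so that it contributes the single integer $t$, not $t+1$), the remark that $2j+1$ consecutive integers form a complete residue system modulo $2j+1$, and, if one insists on the statement for every $t$, keeping track of the divisibility condition $\gcd(m,2j+1)\mid t$ that governs solvability of $mx\equiv t\pmod{2j+1}$.
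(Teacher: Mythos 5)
Your proposal is correct and follows essentially the same route as the paper: reduce the count to the number of $k\in\{0,\dots,2j\}$ with $m(s+k)\equiv t\pmod{2j+1}$ and invoke the standard solution count for a linear congruence. Your closing caveat is in fact warranted — when $d=\gcd(m,2j+1)>1$ and $d\nmid t$ the congruence has no solutions and the count is $0$, so the lemma as literally stated needs that proviso; the paper glosses over this, but it is harmless since the lemma is only applied when $\gcd(m,2j+1)=1$.
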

\begin{proof}
Let $d=gcd(m,2j+1)$. For any given $(s,t)\in\mathbb{Z}^2$, we determine the equation
  \begin{equation}
\left\{
\begin{aligned}
c_1\cdot 1+ c_2\cdot 0&=l+s\\
c_1\cdot m+c_2\cdot (2j+1)&=t\\
\end{aligned}
\right.
\end{equation}
where $l=0,1,\ldots,2j$. One can obtain
$$c_1=l+s,$$
and
$$c_2=\frac{t-ms-ml}{2j+1}.$$
By elementary number theory, there exist exactly $d$ numbers of $l\in\{0,1,\ldots,2j\}$ such that $\frac{t-ms-ml}{2j+1}$ is an integer.
\end{proof}

\begin{lem}\label{D}
$card\{\Lambda(m,j)\cap((s,t)+D(j))\}=\gcd(m+1,2j+1)$, for all $(s,t)\in\mathbb{Z}^2$.
\end{lem}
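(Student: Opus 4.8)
The plan is to follow the template already used in the proofs of Lemmas~\ref{B} and~\ref{C}. First I would determine which points of $\mathbb{Z}^2$ lie in a translate $(s,t)+D(j)$, where $(s,t)\in\mathbb{Z}^2$. Since $D(j)=\bigcup_{i=0}^{2j}[i,i+1)\times[2j-i,2j+1-i)$ is a disjoint union of $2j+1$ half-open unit squares whose lower-left corners are the integer points $(i,2j-i)$, the translate $(s,t)+D(j)$ contains exactly the $2j+1$ integer points $(s+i,\,t+2j-i)$, $i=0,1,\dots,2j$, and no others. Because $\Lambda(m,j)\subseteq\mathbb{Z}^2$, the cardinality $card\{\Lambda(m,j)\cap((s,t)+D(j))\}$ is therefore the number of indices $i\in\{0,1,\dots,2j\}$ for which $(s+i,\,t+2j-i)\in\Lambda(m,j)$.

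Second, I would turn membership in $\Lambda(m,j)$ into a linear congruence exactly as in Lemmas~\ref{B} and~\ref{C}: writing a general lattice point of $\Lambda(m,j)$ as $c_1(1,m)+c_2(0,2j+1)=(c_1,\,c_1m+c_2(2j+1))$ and equating it with $(s+i,\,t+2j-i)$ forces $c_1=s+i$ and
$$c_2=\frac{t+2j-i-(s+i)m}{2j+1}=\frac{(t+2j-sm)-i(m+1)}{2j+1},$$
so $(s+i,\,t+2j-i)\in\Lambda(m,j)$ precisely when $i(m+1)\equiv t+2j-sm \pmod{2j+1}$. This is the exact analogue of the congruence $lm\equiv t-sm$ that appeared in Lemma~\ref{C}, only with the coefficient of the running index changed from $m$ to $m+1$; the reason is geometric, namely that stepping along the anti-diagonal direction $(1,-1)$ of $D(j)$ changes $y-mx$ by $-(m+1)$, while $\Lambda(m,j)$ is exactly the set of integer points with $y-mx\equiv 0\pmod{2j+1}$.

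Finally I would count the solutions of this congruence by the elementary theory of linear congruences, precisely as in Lemma~\ref{C}: since $i$ ranges over $\{0,1,\dots,2j\}$, a complete residue system modulo $2j+1$, the congruence $i(m+1)\equiv t+2j-sm\pmod{2j+1}$ has exactly $\gcd(m+1,2j+1)$ solutions, which yields the claimed value of $card\{\Lambda(m,j)\cap((s,t)+D(j))\}$. I expect the only things that need care are the bookkeeping in the first step — checking that each half-open square of $D(j)$ contributes exactly its lower-left integer corner, so that the $2j+1$ integer points of $(s,t)+D(j)$ are counted with no omission or duplication — and the sign and coefficient arithmetic that makes $m+1$ (and not $m$) the multiplier of $i$; the number-theoretic count itself is routine and identical in form to the one used for Lemma~\ref{C}. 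Once this lemma is in hand it combines with Lemmas~\ref{B} and~\ref{C} through the partition $U(j)=S(j)\cup D(j)\cup S^{*}(j)$ to control $card\{\Lambda(m,j)\cap(p+S(j))\}$.
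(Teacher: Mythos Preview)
Your proposal is essentially identical to the paper's own proof: you parametrize the integer points of $(s,t)+D(j)$ as $(s+l,\,t+2j-l)$ for $l=0,\dots,2j$, set up the same linear system for membership in $\Lambda(m,j)$, arrive at the same congruence $(m+1)l\equiv t+2j-sm\pmod{2j+1}$, and count its solutions by the same appeal to elementary number theory. The only differences are cosmetic---the paper writes $l$ where you write $i$ and omits your geometric remark about the anti-diagonal direction---so the argument matches the paper's exactly.
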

\begin{proof}
Let $d=\gcd(m+1,2j+1)$. Determine the equation
  \begin{equation}
\left\{
\begin{aligned}
c_1\cdot 1+ c_2\cdot 0&=l+s\\
c_1\cdot m+c_2\cdot (2j+1)&=2j-l+t\\
\end{aligned}
\right.
\end{equation}
where $l=0,1,\ldots,2j$ and $(s,t)\in\mathbb{Z}^2$. One can get
$$c_1=l+s,$$
and
$$c_2=\frac{2j-sm+t-(m+1)l}{2j+1}.$$
By elementary number theory, we know that there are exactly $d$ numbers of $l$ in $\{0,1,\ldots,2j\}$ such that $\frac{2j-sm+t-(m+1)l}{2j+1}$ is an integer. Hence there are exactly $d$ lattice points in $\Lambda(m,j)\cap ((s,t)+D(j))$.
\end{proof}

\begin{lem}\label{translative_invariant_of_card}
Suppose that $m$ satisfies $1\leq m\leq 2j+1$ and $\gcd(m,2j+1)=\gcd(m+1,2j+1)=1$. Given a $(s,t)\in\mathbb{Z}^2$. If
$$card\{\Lambda(m,j)\cap ((s,t)+S(j))\}=k,$$ then
$$card\{\Lambda(m,j)\cap ((s',t')+S(j))\}=k,$$
for every $(s',t')\in\mathbb{Z}^2$.
\end{lem}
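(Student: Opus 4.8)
The plan is to reduce the statement to two \emph{unit-step} identities: for every $(s,t)\in\mathbb{Z}^2$,
\begin{equation*}
card\{\Lambda(m,j)\cap((s+1,t)+S(j))\}=card\{\Lambda(m,j)\cap((s,t)+S(j))\}
\end{equation*}
and
\begin{equation*}
card\{\Lambda(m,j)\cap((s,t+1)+S(j))\}=card\{\Lambda(m,j)\cap((s,t)+S(j))\}.
\end{equation*}
Since any two points of $\mathbb{Z}^2$ are joined by a path of unit horizontal and vertical steps, iterating these two identities gives the lemma. Throughout write $f(s,t)=card\{\Lambda(m,j)\cap((s,t)+S(j))\}$, and note that $S(j)$ is a disjoint union of unit cells with integer corners, so an integer translation of $S(j)$ again consists of such cells.

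For the vertical step I would compare $(s,t)+S(j)$ and $(s,t+1)+S(j)$ column by column. In column $s+i$ (for $0\le i\le 2j-1$) the region $(s,t)+S(j)$ consists of the cells at heights $t,t+1,\dots,t+2j-i-1$, while $(s,t+1)+S(j)$ consists of the cells at heights $t+1,\dots,t+2j-i$. Hence $(s,t+1)+S(j)$ is obtained from $(s,t)+S(j)$ by deleting the bottom row $[s,s+2j)\times[t,t+1)$ and inserting the cells $\bigcup_{i=0}^{2j-1}[s+i,s+i+1)\times[t+2j-i,t+2j-i+1)$. The point is that the deleted set equals $(s,t)+\bigl(C(j)\setminus([2j,2j+1)\times[0,1))\bigr)$ and the inserted set equals $(s,t)+\bigl(D(j)\setminus([2j,2j+1)\times[0,1))\bigr)$; the same corner cell $(s,t)+([2j,2j+1)\times[0,1))$ is removed in both cases, so its lattice-point count cancels in the difference. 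Therefore
\begin{equation*}
f(s,t+1)-f(s,t)=card\{\Lambda(m,j)\cap((s,t)+D(j))\}-card\{\Lambda(m,j)\cap((s,t)+C(j))\},
\end{equation*}
which by Lemma \ref{C}, Lemma \ref{D} and the hypothesis $\gcd(m,2j+1)=\gcd(m+1,2j+1)=1$ equals $1-1=0$.

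The horizontal step is handled analogously. Passing from $(s,t)+S(j)$ to $(s+1,t)+S(j)$ deletes the entire left column $[s,s+1)\times[t,t+2j)$, which equals $(s,t)+\bigl(B(j)\setminus([0,1)\times[2j,2j+1))\bigr)$, and inserts exactly the cells constituting $(s,t)+\bigl(D(j)\setminus([0,1)\times[2j,2j+1))\bigr)$ (the cell at the top of each column $s+1,\dots,s+2j-1$ together with the height-$t$ cell of the new column $s+2j$). Again the common corner cell $(s,t)+([0,1)\times[2j,2j+1))$ cancels, and
\begin{equation*}
f(s+1,t)-f(s,t)=card\{\Lambda(m,j)\cap((s,t)+D(j))\}-card\{\Lambda(m,j)\cap((s,t)+B(j))\}=\gcd(m+1,2j+1)-1=0
\end{equation*}
by Lemma \ref{B} and Lemma \ref{D}.

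The only real work is the combinatorial bookkeeping: pinning down exactly which unit cells are gained and which are lost under a unit translation of the stair polygon $S(j)$, and recognizing the gained and lost sets as translates of $D(j)$, $C(j)$ and $B(j)$ with a single shared corner cell deleted. Once that picture is laid out correctly (a figure superimposing $S(j)$ on its unit translates and on $D(j)$ makes it transparent), the count identities of Lemmas \ref{B}, \ref{C} and \ref{D} finish the argument immediately, so I anticipate no further obstacle.
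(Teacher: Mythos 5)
Your proposal is correct and follows essentially the same route as the paper: reduce to the unit translations $(1,0)$ and $(0,1)$, identify the shifted stair polygon with $(S(j)\cup D(j))\setminus C(j)$ (resp.\ $(S(j)\cup D(j))\setminus B(j)$) up to the shared corner cell, and invoke Lemmas \ref{B}, \ref{C} and \ref{D}. Your cell-by-cell bookkeeping of the gained and lost cells is just a more explicit rendering of the paper's set identities, so there is nothing to add.
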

\begin{proof}
It suffices to show that $card\{\Lambda(m,j)\cap ((s,t)+u+S(j))\}=k$, where $u=(0,1),(1,0)$. Suppose that $u=(0,1)$.
One can see that
$$(s,t+1)+S(j)=  (s,t)+((S(j)\cup D(j))\setminus C(j)).$$
Since $\gcd(m,2j+1)=\gcd(m+1,2j+1)=1$, by Lemma \ref{C} and Lemma \ref{D}, we know that
$$card\{\Lambda(m,j)\cap((s,t)+C(j))\}=1,$$
and
$$card\{\Lambda(m,j)\cap((s,t)+D(j))\}=1.$$
Clearly, $S(j)\cap C(j)\cap D(j)=\emptyset$. From these, one can deduce that
$$card\{\Lambda(m,j)\cap ((s,t+1)+S(j))\}=card\{\Lambda(m,j)\cap ((s,t)+S(j))\}=k$$
When $u=(1,0)$. We have
$$(s+1,t)+S(j)=  (s,t)+((S(j)\cup D(j))\setminus B(j)).$$
By using Lemma \ref{B}, we can obtain
$$card\{\Lambda(m,j)\cap ((s+1,t)+S(j))\}=card\{\Lambda(m,j)\cap ((s,t)+S(j))\}=k$$
\end{proof}

\begin{lem}\label{gcd_m_1_not_equal_to_1}
Suppose that $m$ satisfies $1\leq m\leq 2j+1$, $\gcd(m,2j+1)=1$ and $\gcd(m+1,2j+1)=d$. We have that
$$card\{\Lambda(m,j)\cap S(j)\}=j+1-d.$$
\end{lem}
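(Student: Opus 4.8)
The plan is to avoid computing the count over the staircase $S(j)$ directly, and instead to compare it with the counts over the three pieces of the square $U(j)=[0,2j+1)\times[0,2j+1)$ — for which Lemmas \ref{B}, \ref{C}, \ref{D} already give clean answers — combined with the central symmetry of $U(j)$.

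First I would record that $card\{\Lambda(m,j)\cap U(j)\}=2j+1$. Indeed $U(j)$ is the disjoint union of the $2j+1$ vertical strips $(i,0)+B(j)$ for $i=0,1,\dots,2j$, and Lemma \ref{B} says each of these strips meets $\Lambda(m,j)$ in exactly one point. Next, Lemma \ref{D} (with $(s,t)=(0,0)$) gives $card\{\Lambda(m,j)\cap D(j)\}=d$. Since $S(j)$, $D(j)$, $S^*(j)$ are pairwise disjoint with union $U(j)$, this yields
$$card\{\Lambda(m,j)\cap S(j)\}+card\{\Lambda(m,j)\cap S^*(j)\}=2j+1-d .$$
So it only remains to control the difference of the two counts on the left.

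For that I would use symmetry. The vector $(2j+1,2j+1)=(2j+1)(1,m)+(1-m)(0,2j+1)$ lies in $\Lambda(m,j)$, so the central reflection $\rho(x,y)=(2j+1-x,\,2j+1-y)$ maps $\Lambda(m,j)$ onto itself. Reindexing the strips by $i\mapsto 2j-i$ shows that $\rho$ carries $\overline{S(j)}$ onto $\overline{S^*(j)}$, hence $card\{\Lambda(m,j)\cap\overline{S(j)}\}=card\{\Lambda(m,j)\cap\overline{S^*(j)}\}$. The half-open sets $S(j)$ and $S^*(j)$ differ from their closures only along their staircase boundaries, which are finite unions of horizontal and vertical segments of exactly the form appearing in Lemmas \ref{B}, \ref{C}, \ref{D}; since $\gcd(m,2j+1)=1$, the lattice points on these segments can be counted explicitly, and tracking which of them the half-open conventions assign to $S(j)$ versus $S^*(j)$ gives $card\{\Lambda(m,j)\cap S^*(j)\}-card\{\Lambda(m,j)\cap S(j)\}=d-1$. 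Substituting into the displayed identity gives $2\,card\{\Lambda(m,j)\cap S(j)\}=(2j+1-d)-(d-1)$, i.e. $card\{\Lambda(m,j)\cap S(j)\}=j+1-d$.

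The step I expect to be the real obstacle is this last one: the boundary bookkeeping in the symmetry argument, where one must go edge by edge and corner by corner to decide which lattice points on the two staircases are swallowed into $S(j)$, into $S^*(j)$, or into neither, and it is precisely here that both hypotheses $\gcd(m,2j+1)=1$ and $\gcd(m+1,2j+1)=d$ are needed. An alternative that sidesteps the closures entirely is a direct column count: the $i$-th column $[i,i+1)\times[0,2j-i)$ of $S(j)$ ($0\le i\le 2j-1$) contains a point of $\Lambda(m,j)$ if and only if $im\bmod(2j+1)<2j-i$, so the lemma is equivalent to the assertion that the number of $i\in\{0,\dots,2j-1\}$ with $i+(im\bmod(2j+1))\le 2j-1$ equals $j+1-d$; this can be attacked by pairing $i$ with $(2j+1)-i$ on the graph of the permutation $i\mapsto im\bmod(2j+1)$ and counting the indices that fall on the line $x+y=2j+1$, of which there are $d-1$ because $\gcd(m+1,2j+1)=d$. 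Either route reduces the lemma to an elementary counting identity in which the two gcd conditions supply exactly the needed input.
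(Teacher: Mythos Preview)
Your strategy is the paper's own: decompose $U(j)$ into $2j+1$ vertical strips and use Lemma~\ref{B} to get the total count $2j+1$, invoke Lemma~\ref{D} for the diagonal $D(j)$, and then exploit the central symmetry $(x,y)\mapsto(2j+1-x,2j+1-y)$---equivalently the column pairing $k\leftrightarrow 2j+1-k$---to balance $S(j)$ against $S^*(j)$. Your ``alternative direct column count'' is the paper's argument almost verbatim: the paper pairs the lattice point $(k,l)$ in column~$k$ with its reflection $(2j+1-k,2j+1-l)$ and splits into the cases where the pair meets $D(j)$ or not.

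The gap is exactly where you placed it, but it is fatal rather than merely tedious: the boundary bookkeeping cannot be completed to give $d-1$, because the statement being proved---and Lemma~\ref{D} on which both you and the paper rely---is \emph{false} for $d>1$. At $(s,t)=(0,0)$ the congruence in the proof of Lemma~\ref{D} reads $l(m+1)\equiv -1\pmod{2j+1}$, which has \emph{no} solution when $d=\gcd(m+1,2j+1)>1$; hence $card\{\Lambda(m,j)\cap D(j)\}=0$, not $d$, and your displayed identity $card\{\cdot\cap S(j)\}+card\{\cdot\cap S^*(j)\}=2j+1-d$ is already off. The paper's parallel claim, that $(k,2j+1-k)\notin\Lambda(m,j)$ for $1\le k\le 2j$, likewise fails for the $d-1$ values of $k$ with $k(m+1)\equiv 0\pmod{2j+1}$; for each such $k$ \emph{both} paired lattice points lie in $S^*(j)$, so ``exactly one of each pair lands in $S(j)$'' breaks for those pairs. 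Concretely, take $j=4$, $m=2$, $d=3$: the lattice points in $U(j)$ are $(k,2k\bmod 9)$ for $0\le k\le 8$, and one checks directly that $\Lambda(m,j)\cap S(j)=\{(0,0),(1,2),(2,4),(5,1)\}$, so the count is $4$, not $j+1-d=2$; the correct value for $d>1$ is $j+(3-d)/2$. Since the only downstream use of the lemma is to force $d=1$, the repair is to prove the weaker assertion that the count equals $j$ if and only if $d=1$; your column-pairing alternative does yield this once the $d-1$ exceptional columns on the line $x+y=2j+1$ are accounted for correctly.
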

\begin{proof}
We note that
$$\Lambda(m,j)\cap U(j)= \bigcup_{k=0}^{2j}(\Lambda(m,j)\cap (u_k+B(j))).$$
where $u_k$ denotes the point $(k,0)$. From Lemma \ref{B}, one can see that
$$card\{\Lambda(m,j)\cap U(j)\}=2j+1.$$
Obviously, $(0,0)\in\Lambda(m,j)\cap S(j)$. Since $gcd(m,2j+1)=1$, it is not hard to prove that for $1\leq k\leq 2j$, $(k,2j+1-k)$ cannot be in $\Lambda(m,j)$. Furthermore, one can show that when $1\leq k\leq 2j$, if $\Lambda(m,j)\cap D(j)\cap(u_k+B(j))=\emptyset$ and $\Lambda(m,j)\cap D(j)\cap(u_{2j+1-k}+B(j))=\emptyset$, then we have $card\{\Lambda(m,j)\cap S(j)\cap(u_k+B(j))\}+card\{\Lambda(m,j)\cap S(j)\cap(u_{2j+1-k}+B(j))\}=1$ (see Figure \ref{SDB}). If $\Lambda(m,j)\cap D(j)\cap(u_k+B(j))\neq\emptyset$, then $card\{\Lambda(m,j)\cap S(j)\cap(u_k+B(j))\}=card\{\Lambda(m,j)\cap S(j)\cap(u_{2j+1-k}+B(j))\}=0$ (see Figure \ref{SDB2}, we note that $card\{\Lambda(m,j)\cap S^*(j)\cap(u_{2j+1-k}+B(j))\}=1$).

\begin{figure}[!ht]
  \centering
    \includegraphics[scale=.80]{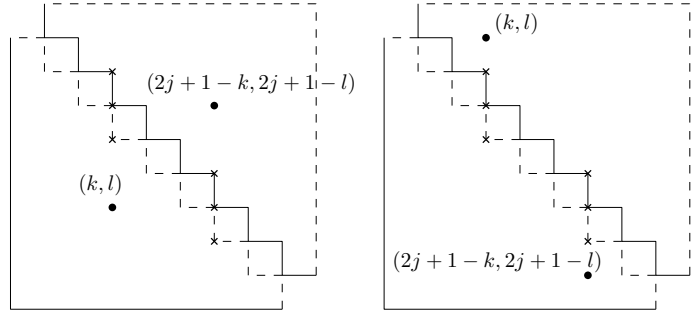}
   \caption{The case $\Lambda(m,j)\cap D(j)\cap(u_k+B(j))=\emptyset$ and $\Lambda(m,j)\cap D(j)\cap(u_{2j+1-k}+B(j))=\emptyset$, where $(k,l),(2j+1-k,2j+1-l)\in \Lambda(m,j)$}\label{SDB}
\end{figure}

\begin{figure}[!ht]
  \centering
    \includegraphics[scale=.80]{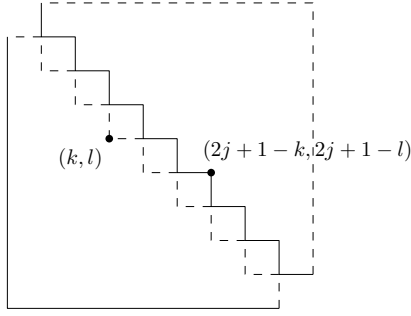}
   \caption{The case $\Lambda(m,j)\cap D(j)\cap(u_k+B(j))\neq\emptyset$, where $(k,l),(2j+1-k,2j+1-l)\in \Lambda(m,j)$}\label{SDB2}
\end{figure}

By Lemma \ref{D}, we know that there exist exactly $d$ numbers of $k_1,\ldots ,k_d\in\{1,2,\ldots,2j\}$ such that $\Lambda(m,j)\cap D(j)\cap(u_{k_i}+B(j))\neq\emptyset$, for $i=1,2,\ldots,d$. Therefore, we have
$card\{\Lambda(m,j)\cap S(j)\cap(u_{k_i}+B(j))\}=card\{\Lambda(m,j)\cap S(j)\cap(u_{2j+1-k_i}+B(j))\}=0$, for $i=1,2,\ldots,d$.
Furthermore, when $k\in\{1,2,\ldots,2j\}\setminus\{k_1,\ldots,k_d,2j+1-k_1,\ldots,2j+1-k_d\}$, we have $card\{\Lambda(m,j)\cap S(j)\cap(u_k+B(j))\}+card\{\Lambda(m,j)\cap S(j)\cap(u_{2j+1-k}+B(j))\}=1$.
This can be deduced that
$$card\{\Lambda(m,j)\cap (S(j)\setminus B(j))\}=\frac{2j-2d}{2}=j-d,$$
and hence
$$card\{\Lambda(m,j)\cap S(j)\}=j+1-d.$$
\end{proof}

\begin{lem}\label{equal_to_j_on_integer_points}
Suppose that $m$ satisfies $1\leq m\leq 2j+1$ and $\gcd(m,2j+1)=\gcd(m+1,2j+1)=1$. For every $(s,t)\in\mathbb{Z}^2$, we have
$$card\{\Lambda(m,j)\cap ((s,t)+S(j))\}=j,$$
\end{lem}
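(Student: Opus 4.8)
The plan is to derive this lemma as an immediate consequence of the two preceding results, Lemma \ref{gcd_m_1_not_equal_to_1} and Lemma \ref{translative_invariant_of_card}. The only content is to observe that the hypotheses here are exactly those of Lemma \ref{gcd_m_1_not_equal_to_1} specialized to the case $d=1$, and then to propagate from the single base point $(0,0)$ to all of $\mathbb{Z}^2$ by translation-invariance.

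First I would apply Lemma \ref{gcd_m_1_not_equal_to_1}. Since $m$ satisfies $1\leq m\leq 2j+1$ and $\gcd(m,2j+1)=1$, that lemma applies with $d=\gcd(m+1,2j+1)$; but the present hypothesis also gives $\gcd(m+1,2j+1)=1$, so $d=1$. Substituting $d=1$ into its conclusion yields
\[
card\{\Lambda(m,j)\cap S(j)\}=j+1-d=j,
\]
which is precisely the assertion of the lemma for the base point $(s,t)=(0,0)$ (recall $(0,0)+S(j)=S(j)$).

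Next I would invoke Lemma \ref{translative_invariant_of_card}, whose hypotheses on $m$ are again exactly the ones assumed here. Taking $(s,t)=(0,0)$ and $k=j$ in that lemma, we conclude that
\[
card\{\Lambda(m,j)\cap ((s',t')+S(j))\}=j
\]
for every $(s',t')\in\mathbb{Z}^2$, which is the desired statement.

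The main difficulty is not located in this lemma at all — it is a two-step corollary of the work already done — but rather sits inside Lemma \ref{gcd_m_1_not_equal_to_1}, where the strip-pairing argument (matching $u_k+B(j)$ with $u_{2j+1-k}+B(j)$ and distributing lattice points among $S(j)$, $D(j)$, $S^*(j)$ via Lemmas \ref{B}, \ref{C}, \ref{D}) does the real counting. The only thing that needs a moment's care here is to confirm that the hypotheses of both invoked lemmas coincide with ours and that the substitution $d=1$ is legitimate; once that is checked, no further computation is needed.
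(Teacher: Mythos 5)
Your proof is correct and follows exactly the paper's own argument: apply Lemma \ref{gcd_m_1_not_equal_to_1} with $d=\gcd(m+1,2j+1)=1$ to get $card\{\Lambda(m,j)\cap S(j)\}=j$, then extend to all $(s,t)\in\mathbb{Z}^2$ via Lemma \ref{translative_invariant_of_card}. Nothing further is needed.
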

\begin{proof}
From Lemma \ref{gcd_m_1_not_equal_to_1}, we have that
$$card\{\Lambda(m,j)\cap S(j)\}=j.$$
Hence, it immediately follows from Lemma \ref{translative_invariant_of_card} that
$$card\{\Lambda(m,j)\cap ((s,t)+S(j))\}=j,$$
for every $(s,t)\in\mathbb{Z}^2$.
\end{proof}

\begin{lem}\label{equal_to_j_on_real_points}
Suppose that $m$ satisfies $1\leq m\leq 2j+1$ and $\gcd(m,2j+1)=\gcd(m+1,2j+1)=1$. Then for every $(x,y)\in\mathbb{R}^2$,
$$card\{\Lambda(m,j)\cap ((x,y)+S(j))\}=j,$$
\end{lem}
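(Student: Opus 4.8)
The plan is to reduce the claim to the integer case already established in Lemma \ref{equal_to_j_on_integer_points}. Write $f(x,y)=card\{\Lambda(m,j)\cap((x,y)+S(j))\}$; the goal is $f\equiv j$. The strategy has two parts: first show $f$ is invariant under translation by $\mathbb{Z}^2$, and then show that on the fundamental domain $(-1,0]\times(-1,0]$ the half-open structure of $S(j)$ forces $f$ to coincide with its value at the origin, which is $j$.

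For the periodicity I would first record the ``real-shift'' versions of Lemmas \ref{B}, \ref{C} and \ref{D}: for every $(x,y)\in\mathbb{R}^2$ one has $card\{\Lambda(m,j)\cap((x,y)+B(j))\}=1$, $card\{\Lambda(m,j)\cap((x,y)+C(j))\}=\gcd(m,2j+1)$, and $card\{\Lambda(m,j)\cap((x,y)+D(j))\}=\gcd(m+1,2j+1)$. These are proved by the very same linear-congruence computations: a lattice point of $\Lambda(m,j)$ lies in $\mathbb{Z}^2$, and a half-open interval $[c,c+\ell)$ with $c$ real and $\ell$ a positive integer always contains exactly $\ell$ consecutive integers, so the integer shift $(s,t)$ in those proofs may be taken real. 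Under the hypothesis $\gcd(m,2j+1)=\gcd(m+1,2j+1)=1$ all three counts equal $1$. Next, the set identities $(s+1,t)+S(j)=(s,t)+((S(j)\cup D(j))\setminus B(j))$ and $(s,t+1)+S(j)=(s,t)+((S(j)\cup D(j))\setminus C(j))$ used in Lemma \ref{translative_invariant_of_card} do not involve $(s,t)$, so they remain valid after translating by an arbitrary real vector $u$; since $S(j)\cap D(j)=\emptyset$ and $B(j),C(j)\subset S(j)\cup D(j)$ with $B(j)=(B(j)\cap S(j))\sqcup(B(j)\cap D(j))$ and likewise for $C(j)$, counting lattice points and inserting the real-shift versions yields $f(u+(1,0))=f(u)+1-1=f(u)$ and $f(u+(0,1))=f(u)+1-1=f(u)$ for all $u$. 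Hence $f(u+v)=f(u)$ for every $v\in\mathbb{Z}^2$.

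To finish, let $(x,y)\in\mathbb{R}^2$ and put $a=x-\lceil x\rceil$, $b=y-\lceil y\rceil$, so $a,b\in(-1,0]$ and $f(x,y)=f(a,b)$ by the periodicity just proved. For such $a,b$ and any $(p,q)\in\mathbb{Z}^2$, a direct inspection of $S(j)=\bigcup_{i=0}^{2j-1}[i,i+1)\times[0,2j-i)$ shows that $(p-a,q-b)\in S(j)$ if and only if $p\ge 0$, $q\ge 0$ and $p+q\le 2j-1$ --- which is exactly the condition $(p,q)\in S(j)$ --- because, $-a$ and $-b$ lying in $[0,1)$, the point $p-a$ stays in $[p,p+1)$ and $q-b$ stays in $[q,q+1)$. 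Therefore $\Lambda(m,j)\cap((a,b)+S(j))=\Lambda(m,j)\cap S(j)$, and so $f(x,y)=f(a,b)=card\{\Lambda(m,j)\cap S(j)\}=j$ by Lemma \ref{equal_to_j_on_integer_points}. The one place that needs care is the half-open bookkeeping: in the periodicity step one must verify the partitions of $B(j)$ and $C(j)$ and the disjointness $S(j)\cap D(j)=\emptyset$, and in the last step it is essential to reduce to $(-1,0]\times(-1,0]$ rather than $[0,1)\times[0,1)$, since translating by a positive amount would push a lattice point off the closed bottom or left edge of a box of $S(j)$ and alter the count.
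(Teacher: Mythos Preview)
Your argument is correct, and its core---your final paragraph---is exactly the paper's proof: for $s=\lceil x\rceil$, $t=\lceil y\rceil$ (so $s-1<x\le s$, $t-1<y\le t$) one has $\Lambda(m,j)\cap((x,y)+S(j))=\Lambda(m,j)\cap((s,t)+S(j))$ because $\Lambda(m,j)\subset\mathbb{Z}^2$ and $S(j)$ is a union of half-open integer unit boxes, and then Lemma~\ref{equal_to_j_on_integer_points} finishes. Your first part (the $\mathbb{Z}^2$-periodicity of $f$ via real-shift versions of Lemmas~\ref{B}, \ref{C}, \ref{D} and the identities from Lemma~\ref{translative_invariant_of_card}) is valid but unnecessary: the half-open observation you make at the end works to pass from any real $(x,y)$ directly to $(\lceil x\rceil,\lceil y\rceil)$, not just from $(-1,0]^2$ to the origin, so the detour through periodicity can be dropped entirely.
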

\begin{proof}
Suppose that $s-1<x\leq s$ and $t-1<y\leq t$, where $s,t\in\mathbb{Z}$.
One can observe that
$$\Lambda(m,j)\cap ((x,y)+S(j))=\Lambda(m,j)\cap ((s,t)+S(j))$$
From Lemma \ref{equal_to_j_on_integer_points}, we obtain
$$card\{\Lambda(m,j)\cap ((x,y)+S(j))\}=j.$$
\end{proof}

It immediately follows from Lemma \ref{equal_to_j_on_real_points} that $S(j)+\Lambda(m,j)$ is an exact $j$-fold lattice tiling of $\mathbb{R}^2$, when $1\leq m\leq 2j+1$ and $\gcd(m,2j+1)=\gcd(m+1,2j+1)=1$. In order to complete the proof of Theorem \ref{optimal_lattice_of_S_j}, we will prove the following lemmas.

\begin{lem}\label{must_cut_at_x_1_and_1_y}
If $S(j)+\Lambda$ is an exact $j$-fold lattice tiling of $\mathbb{R}^2$, then there exist real numbers $-1<x\leq 2j-1$ and $-1<y\leq 2j-1$ such that both $(x,1)$ and $(1,y)$ are in $\Lambda$.
\end{lem}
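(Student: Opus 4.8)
The plan is to slice the tiling by horizontal lines and turn the problem into a one-dimensional bookkeeping argument. First note that, directly from the definition,
$$S(j)=\{(s,t):s\ge 0,\ t\ge 0,\ \lfloor s\rfloor+\lfloor t\rfloor\le 2j-1\},$$
which is manifestly symmetric under the reflection $\sigma\colon (s,t)\mapsto (t,s)$. Consequently, for $p=(p_x,p_y)\in\Lambda$ and a height $w\in\mathbb{R}$ one has $(x,w)\in p+S(j)$ if and only if $0\le w-p_y<2j$ and $x\in[p_x,\ p_x+2j-\lfloor w-p_y\rfloor)$. Since $S(j)+\Lambda$ is an exact $j$-fold tiling, every point of $\mathbb{R}^2$ lies in exactly $j$ of the translates $p+S(j)$; equivalently, for every $w$,
$$\sum_{\substack{p\in\Lambda\\ 0\le w-p_y<2j}}\mathbf{1}_{[p_x,\ p_x+2j-\lfloor w-p_y\rfloor)}(x)=j\qquad\text{for all }x.$$

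Next I would compare, for a fixed $x$ (only finitely many $p$ are relevant), the value of this sum at $w=1$ with its value at $w=1-\varepsilon$ for small $\varepsilon>0$. Terms with $1-p_y\notin\mathbb{Z}$ contribute the same interval on both sides, so the difference involves only the lattice points with $p_y\in\{1,0,-1,\dots,1-2j\}$. Tracking the half-open intervals, for $p$ with $p_y=1-k$ and $1\le k\le 2j$ the associated interval loses exactly the unit subinterval $[p_x+2j-k,\ p_x+2j-k+1)$ at its right end as $w$ decreases through $1$ (with the border cases $k=2j$, where the whole interval $[p_x,p_x+1)$ disappears, and $p_y=1$, where an interval $[p_x,p_x+2j)$ is created). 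Because the total stays identically equal to $j$, these changes must cancel, which after reindexing yields
$$\sum_{\substack{p\in\Lambda\\ p_y=1}}\mathbf{1}_{[p_x,\ p_x+2j)}(x)=\sum_{k=1}^{2j}\ \sum_{\substack{p\in\Lambda\\ p_y=1-k}}\mathbf{1}_{[p_x+2j-k,\ p_x+2j-k+1)}(x)\qquad\text{for all }x.$$

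I would then evaluate this identity at $x=2j-1$. On the right-hand side, the summand coming from $k=1$ and the lattice point $p=(0,0)\in\Lambda$ equals $\mathbf{1}_{[2j-1,\ 2j)}(2j-1)=1$, while every other summand is non-negative, so the right-hand side is at least $1$; hence so is the left-hand side. Therefore there exists $p\in\Lambda$ with $p_y=1$ and $2j-1\in[p_x,\ p_x+2j)$, that is, with $-1<p_x\le 2j-1$; this point $(p_x,1)$ is the required $(x,1)$. For the second point, since $\sigma$ is linear and $\sigma(S(j))=S(j)$, the lattice $\sigma(\Lambda)$ also gives an exact $j$-fold lattice tiling by $S(j)$, so applying what was just proved to $\sigma(\Lambda)$ produces a point $(x',1)\in\sigma(\Lambda)$ with $-1<x'\le 2j-1$, and then $(1,x')=\sigma(x',1)\in\Lambda$ is the required $(1,y)$.

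The only place that requires genuine care is the derivation of the displayed identity: one must verify that, as $w$ decreases past $1$, the only intervals that change are those attached to lattice points at heights $1,0,-1,\dots,1-2j$, that each change amounts to a unit leftward shift of a right endpoint, and that the two extreme cases (an interval born at $p_y=1$, an interval destroyed at $p_y=1-2j$) are handled with the correct indices; once this is in place, evaluating at $x=2j-1$ and invoking the symmetry $\sigma$ are immediate.
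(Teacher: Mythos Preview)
Your argument is correct, but it takes a noticeably different route from the paper's. The paper proceeds by a short local argument: it fixes the single point $u=(1,2j-1)$, lets $V=\{v\in\Lambda:u\in S(j)+v\}$, and observes that if the maximal first coordinate $s_0$ among the points of $V$ were strictly less than $1$, then the nearby point $u-(\varepsilon,0)$ would lie in $S(j)+v$ for every $v\in V$ and also in $S(j)$ itself (since $(0,0)\notin V$), giving $j+1$ covers, a contradiction; hence some $(1,y)\in V$. The companion point $(x,1)$ is obtained by applying the same reasoning to $u'=(2j-1,1)$. Your approach instead slices the tiling by horizontal lines, derives a global one-dimensional identity by tracking how the cross-sectional intervals change as the height crosses $w=1$, and then evaluates at $x=2j-1$; the symmetry $\sigma$ handles the second coordinate.

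Both arguments ultimately hinge on the same combinatorial fact (that the disappearance of the top row of $S(j)$ must be compensated by the appearance of a new translate one unit higher), but the paper's version is shorter and entirely local, while yours packages the same cancellation into an explicit indicator-function identity. Your approach has the advantage that the identity it produces is valid for \emph{every} $x$ simultaneously, which could in principle give more information; the paper's method has the advantage of needing no bookkeeping with floors and half-open intervals beyond a single $\varepsilon$-perturbation. Your acknowledgment that the careful step is the derivation of the displayed identity is well placed: once the tracking of the endpoint changes for each $k$ is written out (which you did sketch correctly, including the two border cases), the rest is immediate.
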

\begin{proof}
Let $u=(1,2j-1)$. Denote by $V$ the collection of lattice points $v$ in $\Lambda$ such that $u\in S(j)+v$. Since $S(j)+\Lambda$ is an exact $j$-fold lattice tiling, we know that $card\{V\}=j$. Let
$$s_0=\max \{s : (s,t)\in V\}$$
Obviously, $s_0\leq 1$. If $s_0<1$, then choose $0<\varepsilon<\min\{1,1-s_0\}$. It is easy to see that
$u-(\varepsilon,0) \in S(j)+v$, for all $v\in V$. Furthermore, it is obvious that $u-(\varepsilon,0)\in S(j)$, but $(0,0)\notin V$. This implies that $card\{((u-(\varepsilon,0))+\Lambda)\cap S(j)\}\geq j+1$. This is a contradiction. Hence, $s_0=1$, i.e., there exists a real nuber $y$ such that $(1,y)\in V$. It is easy to see that $-1<y\leq 2j-1$. By determining the point $(2j-1,1)$, one can show that $(x,1)\in\Lambda$, for some $-1<x\leq 2j-1$.
\end{proof}

\begin{lem}\label{s_more_than_2j_1}
Suppose that $S(j)+\Lambda$ is an exact $j$-fold lattice tiling of $\mathbb{R}^2$ and $s>0$. If $(s,0)\in \Lambda$ or $(0,s)\in \Lambda$, then $s\geq 2j+1$.
\end{lem}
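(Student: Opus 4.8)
The plan is to reduce the claim to a one-line determinant computation, using two ingredients that are either already available or immediate. First I would pin down the covolume of $\Lambda$. Since $S(j)+\Lambda$ is an \emph{exact} $j$-fold tiling, the covering multiplicity of $S(j)+\Lambda$ is identically $j$; integrating this over a fundamental domain $F$ of $\Lambda$ and using that $\{F+v:v\in\Lambda\}$ tiles $\mathbb{R}^2$ (so that $\sum_{v\in\Lambda}|S(j)\cap(F+v)|=|S(j)|$) yields $j\cdot d(\Lambda)=|S(j)|$. A direct count gives $|S(j)|=\sum_{i=0}^{2j-1}(2j-i)=\sum_{k=1}^{2j}k=j(2j+1)$, hence $d(\Lambda)=2j+1$.

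Next I would invoke Lemma \ref{must_cut_at_x_1_and_1_y}, which under the same hypothesis produces real numbers $x,y$ with $(x,1)\in\Lambda$ and $(1,y)\in\Lambda$; only their existence is needed here. The other ingredient is the standard fact that for any $u,v\in\Lambda$ the quantity $\det(u,v)$ is an integer multiple of $d(\Lambda)$ (expand $u$ and $v$ in a basis of $\Lambda$ with integer coordinates). Now if $(s,0)\in\Lambda$ with $s>0$, then $(s,0)$ and $(x,1)$ are linearly independent (a nonzero vector on the $x$-axis is not proportional to a vector whose second coordinate is $1$), and $\det\big((s,0),(x,1)\big)=s$; hence $s$ is a positive integer multiple of $d(\Lambda)=2j+1$, so $s\ge 2j+1$. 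Symmetrically, if $(0,s)\in\Lambda$ with $s>0$, pairing it with $(1,y)$ gives $\det\big((0,s),(1,y)\big)=-s$, so $s=|{-s}|$ is again a positive multiple of $2j+1$, and again $s\ge 2j+1$.

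I do not anticipate a genuine obstacle: the argument rests only on the identity $d(\Lambda)=2j+1$ (the area count together with the exact-tiling integral) and on determinant divisibility in a lattice, both routine. The one point that deserves a word of care is the non-degeneracy of the chosen pair of vectors, which is clear from the shape of the two vectors involved.
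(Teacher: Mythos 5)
Your proof is correct and follows essentially the same route as the paper: establish $d(\Lambda)=|S(j)|/j=2j+1$ from the exact $j$-fold tiling, pull the vectors $(x,1)$ and $(1,y)$ from Lemma \ref{must_cut_at_x_1_and_1_y}, and conclude via the fact that the determinant of two linearly independent lattice vectors is a nonzero integer multiple of $d(\Lambda)$. The only differences are cosmetic — you spell out the area/covolume computation and treat the $(0,s)$ case explicitly where the paper appeals to symmetry.
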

\begin{proof}
Since $S(j)+\Lambda$ is an exact $j$-fold lattice tiling of $\mathbb{R}^2$, one can see that $\frac{|S(j)|}{d(\Lambda)}=j$. Hence $d(\Lambda)=2j+1$. Without loss of generality, we assume that $(s,0)\in\Lambda$. By Lemma \ref{must_cut_at_x_1_and_1_y}, there exists $x$ such that $(x,1)\in\Lambda$.
By the property of $d(\Lambda)$, it is clear that
\begin{gather*}
s=
\begin{vmatrix}
s & x \\
0 & 1
\end{vmatrix}\quad
\end{gather*}
must be greater than or equal to $2j+1$.
\end{proof}

\begin{lem}\label{to_prove_lattice_point_integer}
If $S(j)+\Lambda$ is an exact $j$-fold lattice tiling of $\mathbb{R}^2$, then there exist $s,t\in\{1,2,\ldots,2j\}$ such that $(-s,s-1)$ and $(t-1,-t)$ both are in $\Lambda$.
\end{lem}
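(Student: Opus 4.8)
The plan is to probe the exact $j$-fold tiling $S(j)+\Lambda$ near the boundary of $S(j)$, in the spirit of the proof of Lemma~\ref{must_cut_at_x_1_and_1_y}, and to read off the required lattice vectors from an overcounting/swapping argument, using $d(\Lambda)=2j+1$ and Lemma~\ref{s_more_than_2j_1} to control their coordinates.

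First I would reduce by symmetry. The staircase $S(j)$ is invariant under $\sigma(a,b)=(b,a)$, so if $S(j)+\Lambda$ is an exact $j$-fold lattice tiling then so is $S(j)+\sigma(\Lambda)$; since $\sigma$ carries $(t-1,-t)$ to $(-t,t-1)$, it suffices to prove the one-sided statement that there is an integer $t$ with $1\le t\le 2j$ and $(t-1,-t)\in\Lambda$, and then apply it to $\sigma(\Lambda)$ to get $(-s,s-1)\in\Lambda$. To produce $(t-1,-t)$, I would work along the bottom edge $\{(a,0):0\le a<2j\}$ of $S(j)$. Fix a suitable $c\in(0,2j)$ and a small $\delta>0$, and consider the probe point $p=(c,-\delta)$ just below this edge: every lattice vector $v$ with $p-v\in S(j)$ has negative second coordinate, and there are exactly $j$ of them. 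When $p$ is slid upward across $y=0$ into $S(j)$, the vector $(0,0)$ newly covers it, so the exact $j$-fold property forces some vector $v=(v_1,v_2)$ with $v_2<0$ to leave $S(j)+v$ at the same moment. Since $S(j)$ is down-closed (cf.\ Lemma~\ref{left_and_under_must_belong}), such a vector can leave only when $p-v$ passes through the horizontal top edge $y=2j-k$ of a column $[k,k+1)$ of $S(j)$; letting $\delta\to0$ and using that only finitely many lattice points can ever be involved, one pins $v_2$ to the integer $k-2j=-t$ with $t=2j-k\in\{1,\dots,2j\}$, and the choice of $c$ confines $v_1$ to a half-open unit interval. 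It then remains to show $v_1=t-1$; for this I would combine $v=(v_1,-t)\in\Lambda$ with the vectors $(x,1),(1,y)\in\Lambda$ from Lemma~\ref{must_cut_at_x_1_and_1_y} to form horizontal and vertical lattice vectors and invoke Lemma~\ref{s_more_than_2j_1} together with $d(\Lambda)=2j+1$ to exclude every value in the interval except $t-1$.

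The hardest part is the second half of that argument. The perturbation itself requires care: when $p$ crosses $y=0$, several covering vectors can enter or leave simultaneously, since $p-v$ may be near the top staircase of several translates $S(j)+v$ at once, so one must take $c$ generic and $\delta$ small and track every jump of the floor functions $\lfloor c-v_1\rfloor$ and $\lfloor -\delta-v_2\rfloor$ to be sure the net change is precisely ``$(0,0)$ enters, one vector with integer second coordinate exits''. But the genuinely delicate step is the final pinning of $v_1$: the probe alone only confines $v_1$ to a unit interval, and ruling out the remaining non-integer (and wrong-integer) possibilities --- in particular the case where the exiting vector is a multiple of $(x,1)$ --- is where the full force of $d(\Lambda)=2j+1$, Lemma~\ref{s_more_than_2j_1}, and the locations of $(x,1)$ and $(1,y)$ must be brought to bear.
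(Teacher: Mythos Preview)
Your edge-probe does correctly produce a lattice vector $v^*=(v_1^*,-t)$ with $t\in\{1,\dots,2j\}$ an integer, but the final ``pinning'' step is not merely delicate --- it is attempting to prove something that is in general false. The vector $v^*$ you extract need not satisfy $v_1^*=t-1$ at all. Take $j=1$ and $\Lambda=\langle(1,1),(0,3)\rangle$, which by Theorem~\ref{optimal_lattice_of_S_j} makes $S(1)+\Lambda$ an exact $1$-fold tiling. Probing at any $c\in(0,1)$, the unique vector covering $(c,-\delta)$ for small $\delta>0$ is $v^*=(-1,-1)$, so $t=1$ while $v_1^*=-1\ne t-1=0$; indeed the half-open unit interval your argument confines $v_1^*$ to is $(c-2,c-1]\subset(-2,0)$, which does not even contain $0$. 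No amount of algebra with $(x,1)$, $(1,y)$, $d(\Lambda)$ and Lemma~\ref{s_more_than_2j_1} will force $v_1^*=0$ here, because it simply is not. (The vectors the lemma actually asserts for this $\Lambda$ are $(-2,1)$ and $(1,-2)$; your probe at $c\in(1,2)$ happens to find $(1,-2)$, but nothing in your outline selects that range of $c$, and your ``generic $c$'' instruction points the other way.)

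The paper sidesteps this by probing at a \emph{corner} of $S(j)$ rather than along an edge. It takes $u=(0,2j)$, shows (using Lemma~\ref{s_more_than_2j_1}) that among the $j$ translates covering $u$ exactly one meets $u$ on its boundary while the other $j-1$ contain $u$ in their interior, and then examines three pieces of a small neighbourhood of $u$: the segment just to the left, the segment just below, and the open square below-left. The $j+1$ translates $\{S(j)+v:v\in V(u)\cup\{(0,0)\}\}$ already account for the exact $j$-fold covering of both segments but cover the square only $j-1$ times, so a further translate $S(j)+w$ must cover the square while missing both segments. This forces $u-w$ to be an \emph{outer} staircase corner of $S(j)$, namely $u-w=(s,2j+1-s)$ for some $s\in\{1,\dots,2j\}$, whence $w=(-s,s-1)$ with both coordinates integral at once. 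A one-dimensional edge probe can only ever pin one coordinate; the two-dimensional corner probe is what pins both.
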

\begin{proof}
Let $u=(0,2j)$. Denote by $V(u)$ the collection of lattice points $v$ in $\Lambda$ such that $u\in S(j)+v$. Then $card\{V(u)\}=j$.
Let
$$b_0=\max \{b : (a,b)\in V(u)\}$$
Obviously, $b_0\leq 2j$. If $b_0<2j$, then
choose $0<\varepsilon<\min\{1,2j-b_0\}$. It is easy to see that
$u-(0,\varepsilon) \in S(j)+v$, for all $v\in V(u)$.
Note that $(0,0)\notin V(u)$, but $u-(0,\varepsilon) \in S(j)$. This is a contradiction, since $S(j)+\Lambda$ is an exact $j$-fold lattice tiling of $\mathbb{R}^2$. Hence $b_0=2j$. From Lemma \ref{s_more_than_2j_1}, it follows that there is exactly one $(a,b)\in V(u)$ such that $b=2j$. Assume that $(a_0,2j)\in V(u)$. Clearly, $-2j<a_0\leq 0$. Again, by Lemma \ref{s_more_than_2j_1}, it is not hard to see that $a_0\neq 0$ and $u\in Int(S(j)+v)$, whenever $v\in V(u)\setminus \{(a_0,2j)\}$. Therefore, there exists $0<\varepsilon_0< -a_0$ such that for all $0<\varepsilon'\leq \varepsilon_0$ and $v\in V(u)\setminus \{(a_0,2j)\}$, $u-(0,\varepsilon')\in S(j)+v$ and $u-(\varepsilon',0)\in S(j)+v$. This can be deduced that for every $0<\varepsilon'\leq \varepsilon_0$,  $V(u-(0,\varepsilon'))=(V(u)\cup\{(0,0)\})\setminus \{(a_0,2j)\}$ and
$V(u-(\varepsilon',0))=V(u)$.

So until now, if let
$$\mathcal{F}=\{S(j)+v : v\in V(u)\cup\{(0,0)\}\},$$
then we have that
\begin{enumerate}[(i)]
\item there are exactly $j$ polygons in $\mathcal{F}$ that contain the line segment $\{u-(\varepsilon',0): 0<\varepsilon'<\varepsilon_0\}$.
\item there are exactly $j$ polygons in $\mathcal{F}$ that contain the line segment $\{u-(0,\varepsilon'): 0<\varepsilon'<\varepsilon_0\}$.
\item there are exactly $j-1$ polygons in $\mathcal{F}$ that contain the square $\widetilde{U}=\{u-(\varepsilon'_1,\varepsilon'_2): 0<\varepsilon'_i<\varepsilon_0, i=1,2\}$ ( Here, we note that $S(j)\cap \widetilde{U}=\emptyset$ and $(S(j)+(a_0,2j))\cap\widetilde{U}=\emptyset$).
\end{enumerate}
From these, one can see that there must exist an integer $1\leq s\leq 2j$ and a lattice point $v\in\Lambda$ such that $(s,2j+1-s)+v=u$, i.e., $v=(-s,s-1)$ (see Figure \ref{svu}). By symmetry, one can obtain that $(t-1,-t)\in\Lambda$, for some $t\in\{1,2,\ldots,2j\}$.
\end{proof}

\begin{figure}[!ht]
  \centering
    \includegraphics[scale=1]{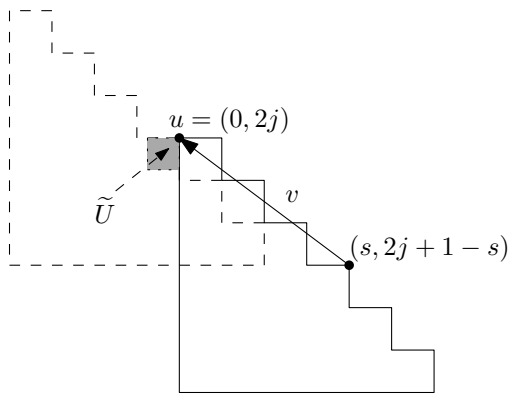}
   \caption{$(s,2j+1-s)+v=u$}\label{svu}
\end{figure}

Now we will prove the remaining part of Theorem \ref{optimal_lattice_of_S_j}. Suppose that $S(j)+\Lambda$ is an exact $j$-fold lattice tiling of $\mathbb{R}^2$. From Lemma \ref{to_prove_lattice_point_integer}, we may assume that $(-s,s-1),(t-1,-t)\in\Lambda$, for some $s,t\in\{1,2,\ldots,2j\}$. We note that $d(\Lambda)=2j+1$. Hence
\begin{gather*}
s+t-1=
\begin{vmatrix}
-s & t-1 \\
s-1 & -t
\end{vmatrix}\quad
\end{gather*}
must be divisible by $2j+1$. Since $0<s+t-1\leq 4j-1$, we have $s+t-1=2j+1$, i.e., $\Lambda$ can be generated by $(-s,s-1)$ and $(t-1,-t)$. From Lemma \ref{must_cut_at_x_1_and_1_y}, there exists a real number $-1<m\leq 2j-1$ such that $(1,m)\in\Lambda$. Since $s$ and $t$ both are integers, we have that $m$ is also an integer. By determining the equation
\begin{equation}
\left\{
\begin{aligned}
c_1\cdot (-s)+ c_2\cdot (t-1)&=0\\
c_1\cdot (s-1)+c_2\cdot (-t)&=2j+1\\
\end{aligned}
\right.
\end{equation}
One can see that $(0,2j+1)\in\Lambda$, and hence $\Lambda$ can be generated by $(1,m)$ and $(0,2j+1)$, where $m\in\{0,1,\ldots,2j-1\}$. From Lemma \ref{s_more_than_2j_1}, we know that $m\neq 0$, i.e., $m\in\{1,2,\ldots,2j-1\}$.
Again, from Lemma \ref{must_cut_at_x_1_and_1_y} and Lemma \ref{s_more_than_2j_1}, since $s,t$ are integers, there must exist an integer $1\leq n\leq 2j-1$ such that $(n,1)\in\Lambda$. If $\gcd(m,2j+1)\neq 1$, then we can choose an integer $k$ satisfies $0\leq k\leq 2j$ and $mn-k$ is divisible by $2j+1$. One can see that $(n,k)\in\Lambda$ and $k\neq 1$. Hence $(n,1),(n,k)\in \Lambda\cap((n,0)+B(j))$. From Lemma \ref{B}, we know that this is impossible. Therefore, $\gcd(m,2j+1)=1$. Now we suppose that $\gcd(m+1,2j+1)=d$. From Lemma \ref{gcd_m_1_not_equal_to_1}, we know that
$$card\{\Lambda\cap S(j)\}=j+1-d.$$
Since $S(j)+\Lambda$ is an exact $j$-fold lattice tiling of $\mathbb{R}^2$, we have
$$card\{\Lambda\cap S(j)\}=j.$$
Hence $gcd(m+1,2j+1)=d=1$. This completes the proof of Theorem \ref{optimal_lattice_of_S_j}.

\section{Generalized Euler $\varphi$ Function}
\begin{defn}
An \emph{arithmetic function} is a function that is defined for all positive integers.
\end{defn}
\begin{defn}
An arithmetic function $f$ is called \emph{multiplicative} if $f(mn)=f(m)f(n)$ whenever $m$ and $n$ are relatively prime positive integers.
\end{defn}

We have the following elementary result.
\begin{thm}\label{multiplicative_function}
If $f$ is a multiplicative function and if $n=p_1^{a_1}p_2^{a_2}\cdots p_s^{a_s}$ is the prime power factorization of the positive integer $n$, then
$$f(n)=f(p_1^{a_1})f(p_2^{a_2})\cdots f(p_s^{a_s}).$$
\end{thm}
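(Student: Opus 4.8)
The plan is to argue by induction on the number $s$ of distinct primes appearing in the prime power factorization of $n$. For the base case $s=1$ there is nothing to do: the claimed identity is literally $f(p_1^{a_1})=f(p_1^{a_1})$.

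For the inductive step I would assume the statement holds for every positive integer having at most $s-1$ distinct prime factors, and write $n=p_1^{a_1}\cdot m$ with $m=p_2^{a_2}\cdots p_s^{a_s}$. The key observation is that $p_1^{a_1}$ and $m$ are relatively prime: since the $p_i$ are pairwise distinct primes, a common divisor of $p_1^{a_1}$ and $m$ would force $p_1\mid m$, which is impossible. Hence the defining property of a multiplicative function applies and gives $f(n)=f(p_1^{a_1})f(m)$. Since $m$ has exactly $s-1$ distinct prime factors, the induction hypothesis yields $f(m)=f(p_2^{a_2})\cdots f(p_s^{a_s})$, and substituting this into the previous equation completes the induction.

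Since the whole argument is a direct unwinding of the definition together with a one-line induction, there is no substantive obstacle here; the only point requiring (minimal) care is the coprimality of $p_1^{a_1}$ and $p_2^{a_2}\cdots p_s^{a_s}$, which is precisely what licenses the use of multiplicativity at each stage. This elementary theorem is then available to convert the counting identity in Corollary \ref{cor_number_of_optimal_lattice} into the stated product over the distinct primes dividing $2j+1$, once one checks that the relevant counting function $n\mapsto n\prod_{p\mid n}\left(1-\frac{2}{p}\right)$ is multiplicative.
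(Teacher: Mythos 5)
Your induction on the number $s$ of distinct prime factors, using that $p_1^{a_1}$ and $p_2^{a_2}\cdots p_s^{a_s}$ are coprime to invoke multiplicativity at each stage, is correct and is the standard argument; the paper itself states this as an elementary result and omits the proof entirely. One small remark on your closing aside: what the paper actually needs to be multiplicative is the counting function $\varphi^k$ itself (so that Theorem \ref{multiplicative_function} reduces its evaluation to prime powers), rather than the final product formula, but this does not affect the validity of your proof of the stated theorem.
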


To find the number of $m$ that satisfies the conditions in Theorem \ref{optimal_lattice_of_S_j}, we determine the following arithmetic function
\begin{equation*}
\varphi^k(n)=card\{m : 1\leq m\leq n,~\gcd(m,n)=\cdots=\gcd(m+k-1,n)=1\}
\end{equation*}
When $k=1$, $\varphi^1$ is the well-known Euler Phi function. It is not hard to prove that $\varphi^k$ is a multiplicative function. Furthermore, when $p$ is a prime number and $a$ is a positive integer, if $p>k$ then $\varphi^k(p^a)=p^{a-1}(p-k)$, and if $p\leq k$ then $\varphi^k(p^a)=0$.
From Theorem \ref{multiplicative_function}, we can obtain the following result.

\begin{thm}\label{euler_k_phi_function}
Let $n=p_1^{a_1}p_2^{a_2}\cdots p_s^{a_s}$ be the prime power factorization of the positive integer $n$. Then
\begin{equation}
\varphi^k(n)=
\begin{cases}
n(1-\frac{k}{p_1})\cdots(1-\frac{k}{p_s}) & p_i>k \text{~for all~}i=1,\ldots,s,\\
0 & p_i\leq k \text{~for some~} i.\\
\end{cases}
\end{equation}
\end{thm}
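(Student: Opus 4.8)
The plan is to recognize $\varphi^k$ as a multiplicative arithmetic function and then apply Theorem \ref{multiplicative_function}, so that everything reduces to evaluating $\varphi^k$ on prime powers. Both ingredients — multiplicativity and the prime-power values — were announced in the paragraph preceding the statement, so the proof is mostly a matter of assembling them; below I indicate how I would establish the two ingredients and then combine them.

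For multiplicativity, suppose $n = n_1 n_2$ with $\gcd(n_1, n_2) = 1$. Since $\gcd(\,\cdot\,, n)$ depends only on the residue modulo $n$, I would first note that $\varphi^k(n)$ equals the number of residue classes $m \in \mathbb{Z}/n\mathbb{Z}$ with $\gcd(m+i, n) = 1$ for all $i = 0, 1, \ldots, k-1$. The Chinese Remainder Theorem gives a ring isomorphism $\mathbb{Z}/n\mathbb{Z} \cong \mathbb{Z}/n_1\mathbb{Z} \times \mathbb{Z}/n_2\mathbb{Z}$ under which $m \mapsto (m \bmod n_1,\, m \bmod n_2)$ and, crucially (because the isomorphism respects addition), $m + i \mapsto (m+i \bmod n_1,\, m + i \bmod n_2)$. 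Using $\gcd(m+i, n) = 1 \iff \gcd(m+i, n_1) = 1$ and $\gcd(m+i, n_2) = 1$, the set counted by $\varphi^k(n)$ corresponds bijectively to the product of the sets counted by $\varphi^k(n_1)$ and $\varphi^k(n_2)$, whence $\varphi^k(n_1 n_2) = \varphi^k(n_1)\varphi^k(n_2)$.

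For a prime power $p^a$, coprimality to $p^a$ is the same as coprimality to $p$, so I need the number of $m \in \mathbb{Z}/p^a\mathbb{Z}$ avoiding the residues $0, -1, \ldots, -(k-1) \pmod p$. If $p \le k$ these forbidden residues exhaust $\mathbb{Z}/p\mathbb{Z}$, so no $m$ works and $\varphi^k(p^a) = 0$. If $p > k$ the forbidden residues are $k$ distinct classes modulo $p$, leaving $p - k$ admissible classes modulo $p$; each lifts to exactly $p^{a-1}$ classes modulo $p^a$, so $\varphi^k(p^a) = p^{a-1}(p - k)$.

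Finally, writing $n = p_1^{a_1}\cdots p_s^{a_s}$ and invoking Theorem \ref{multiplicative_function}, $\varphi^k(n) = \prod_{i=1}^s \varphi^k(p_i^{a_i})$. If some $p_i \le k$ the corresponding factor vanishes, giving $\varphi^k(n) = 0$; otherwise $\varphi^k(n) = \prod_{i=1}^s p_i^{a_i - 1}(p_i - k) = \prod_{i=1}^s p_i^{a_i}\bigl(1 - \tfrac{k}{p_i}\bigr) = n \prod_{i=1}^s \bigl(1 - \tfrac{k}{p_i}\bigr)$, which is the claimed formula. I do not expect a genuine obstacle here; the only points requiring care are the periodicity remark that legitimizes counting residue classes and the verification that the CRT bijection is compatible with the simultaneous shifts $m \mapsto m+i$ — both are straightforward.
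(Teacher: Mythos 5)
Your proof is correct and follows exactly the route the paper indicates (multiplicativity via CRT, the prime-power values $\varphi^k(p^a)=p^{a-1}(p-k)$ or $0$, then Theorem \ref{multiplicative_function}); the paper merely asserts the two ingredients without proof, and you supply the details correctly.
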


\section{Proof of Main Theorems}
Let $T$ be the triangle of vertices $(0,0)$, $(1,0)$ and $(0,1)$.
We recall that $\mathcal{S}_j$ is the collection of half open $r$-stair polygons $S$ contained in T which $r\leq 2j-1$ and $S$ is an exact $j$-fold tile, and $\mathcal{S}^j$ is the collection of half open $r$-stair polygons $S$ such that $Int(T)\subset S$, $r\leq 2j-1$ and $S$ is an exact $j$-fold tile. We denote by $A_j$ the maximum area of polygons in $\mathcal{S}_j$ and denote by $A^j$ the minimum area of polygons in $\mathcal{S}^j$.

Let $\mathcal{S}_j^*$ be the collection of half open $r$-stair polygons that contained in $T$ and $r\leq 2j-1$. Let $\mathcal{S}^j_*$ be the collection of half open $r$-stair polygons that contain $Int(T)$ and $r\leq 2j-1$.
Denote by $A_j^*$ the maximum area of polygons in $\mathcal{S}_j^*$. Denote by $A^j_*$ the minimum area of polygons in $\mathcal{S}^j_*$. By elementary calculations, one can obtain that
$A_j^*=\frac{j}{2j+1}$ ,
and
$A^j_*=\frac{2j+1}{4j}$.
Furthermore, we have the following lemmas.

\begin{lem}\label{A_j_down_no_need_tile}
Suppose that $S\in\mathcal{S}_j^*$. We have that $|S|=\frac{j}{2j+1}$ if and only if $S=\frac{1}{2j+1}S(j)$.
\end{lem}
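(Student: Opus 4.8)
The plan is to bound $|S|$ by a quantity depending only on the abscissae of $S$, evaluate that bound with the Cauchy--Schwarz inequality, and then trace back through the equalities. Write a half open $r$-stair polygon $S\subset T$ with $r\le 2j-1$ as $S=\bigcup_{i=0}^r[x_i,x_{i+1})\times[y_{r+1},y_i)$, where $0\le x_0<x_1<\cdots<x_{r+1}$ and $y_0>y_1>\cdots>y_r>y_{r+1}\ge 0$ (the bounds $x_0\ge 0$, $y_{r+1}\ge 0$ hold since $S\subset T$ lies in the first quadrant). Since the corner $(x_{i+1},y_i)$ is the supremum of $x+y$ on the $i$-th rectangle, $S\subset T$ is equivalent to $x_{i+1}+y_i\le 1$ for $i=0,\ldots,r$; in particular $y_i-y_{r+1}\le y_i\le 1-x_{i+1}$, so
$$|S|=\sum_{i=0}^r(x_{i+1}-x_i)(y_i-y_{r+1})\ \le\ \sum_{i=0}^r(x_{i+1}-x_i)(1-x_{i+1})\ =:\ G.$$

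Next I would put $G$ into closed form. Setting $d_i=x_{i+1}-x_i$ for $0\le i\le r$ and $d_{r+1}=1-x_{r+1}$, all nonnegative with $\sum_{i=0}^{r+1}d_i=1-x_0\le 1$, one has $1-x_{i+1}=\sum_{k=i+1}^{r+1}d_k$, so that
$$G=\sum_{i=0}^{r}d_i\sum_{k=i+1}^{r+1}d_k=\sum_{0\le i<k\le r+1}d_id_k=\frac12\left[\left(\sum_{i=0}^{r+1}d_i\right)^2-\sum_{i=0}^{r+1}d_i^2\right].$$
Using $\sum_{i=0}^{r+1}d_i^2\ge\frac{1}{r+2}\left(\sum_{i=0}^{r+1}d_i\right)^2$ (Cauchy--Schwarz), together with $\sum_{i=0}^{r+1}d_i\le 1$ and the fact that $\frac{r+1}{2(r+2)}$ is strictly increasing in $r$ while $r\le 2j-1$, I get
$$|S|\ \le\ G\ \le\ \frac12\cdot\frac{r+1}{r+2}\left(\sum_{i=0}^{r+1}d_i\right)^2\ \le\ \frac{r+1}{2(r+2)}\ \le\ \frac{2j}{2(2j+1)}=\frac{j}{2j+1}.$$

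Since $A_j^*=\frac{j}{2j+1}$ has already been computed, the hypothesis $|S|=\frac{j}{2j+1}$ forces every inequality in this chain to be an equality, and then I would read off the forced values. Equality in $\frac{r+1}{2(r+2)}\le\frac{j}{2j+1}$ gives $r=2j-1$; equality in Cauchy--Schwarz gives all $d_i$ equal, and equality in $\sum d_i\le 1$ gives $x_0=0$, so $d_i=\frac1{2j+1}$ and $x_i=\frac{i}{2j+1}$ for $i=0,\ldots,2j$; finally, equality in $|S|\le G$ gives $y_i-y_{r+1}=1-x_{i+1}$ for all $i$, which combined with $y_i\le 1-x_{i+1}$ yields $y_{r+1}\le 0$, hence $y_{r+1}=0$ and $y_i=1-x_{i+1}=\frac{2j-i}{2j+1}$. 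Therefore $S=\bigcup_{i=0}^{2j-1}\left[\frac{i}{2j+1},\frac{i+1}{2j+1}\right)\times\left[0,\frac{2j-i}{2j+1}\right)=\frac1{2j+1}S(j)$. The converse is the direct computation $\left|\frac1{2j+1}S(j)\right|=\frac1{(2j+1)^2}\sum_{i=0}^{2j-1}(2j-i)=\frac1{(2j+1)^2}\cdot\frac{2j(2j+1)}{2}=\frac{j}{2j+1}$.

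The conceptual step is the reformulation $G=\sum_{i<k}d_id_k=\frac12\bigl[(\sum d_i)^2-\sum d_i^2\bigr]$, after which the estimate is essentially forced by Cauchy--Schwarz and the bound $\sum d_i\le1$. I expect the only delicate point to be the equality bookkeeping at the end: one must check carefully that equality in $|S|\le G$, combined with the separate inequality $y_i\le 1-x_{i+1}$, pins down $y_{r+1}=0$ (and not merely the $y_i$), and that $x_0=0$ is genuinely forced by equality in $\sum d_i\le1$ rather than assumed for convenience.
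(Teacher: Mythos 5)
Your proof is correct. The paper offers no argument for this lemma at all --- it is stated after the remark that $A_j^*=\frac{j}{2j+1}$ follows ``by elementary calculations'' --- so there is no authorial proof to compare against; your write-up simply supplies the omitted elementary calculation in full. The chain $|S|\le G\le\frac12\cdot\frac{r+1}{r+2}\left(\sum d_i\right)^2\le\frac{r+1}{2(r+2)}\le\frac{j}{2j+1}$ is valid (the containment $S\subset T$ is indeed equivalent to $x_0\ge0$, $y_{r+1}\ge0$ and $x_{i+1}+y_i\le1$ because the rectangles are half open, so the supremum $x_{i+1}+y_i$ of $x+y$ is not attained), and your equality bookkeeping correctly extracts $r=2j-1$, $x_0=0$, $d_i=\frac{1}{2j+1}$, and then $y_{r+1}=0$ from the combination of $y_i-y_{r+1}=1-x_{i+1}$ with $y_i\le1-x_{i+1}$. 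The converse computation is routine and correct.
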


\begin{lem}\label{A_j_up_no_need_tile}
Suppose that $S\in\mathcal{S}^j_*$. We have that $|S|=\frac{2j+1}{4j}$ if and only if $S=\frac{1}{2j}S(j)$.
\end{lem}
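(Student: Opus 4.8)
The plan is to normalize an arbitrary member of $\mathcal{S}^j_*$ without increasing its area, to bound the area of normalized polygons from below by a Cauchy--Schwarz estimate, and to track the equality case. The direction ``$S=\frac{1}{2j}S(j)$ implies $|S|=\frac{2j+1}{4j}$'' is a direct check: $\frac{1}{2j}S(j)=\bigcup_{i=0}^{2j-1}[\frac{i}{2j},\frac{i+1}{2j})\times[0,\frac{2j-i}{2j})$ is a half open $(2j-1)$-stair polygon of area $\frac{1}{(2j)^2}\sum_{i=0}^{2j-1}(2j-i)=\frac{j(2j+1)}{4j^2}=\frac{2j+1}{4j}$, and it contains $Int(T)$ because $x,y>0$, $x+y<1$ and $\frac{i}{2j}\le x<\frac{i+1}{2j}$ give $y<1-x\le1-\frac{i}{2j}=\frac{2j-i}{2j}$.

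For the converse, let $S=\bigcup_{i=0}^r[x_i,x_{i+1})\times[y_{r+1},y_i)\in\mathcal{S}^j_*$ with $|S|=\frac{2j+1}{4j}$, so $r\le2j-1$. Since $Int(T)$ has points with $x$ and with $y$ arbitrarily small positive, $x_0\le0$ and $y_{r+1}\le0$, and covering the first coordinates in $(0,1)$ gives $x_{r+1}\ge1$. I would apply operations each of which keeps $S$ a half open stair polygon containing $Int(T)$ and \emph{weakly} decreases $|S|$ --- \emph{strictly} for a deletion: (i) delete every column contained in $\{x\le0\}$, in $\{x\ge1\}$, or in $\{y\le0\}$ (these form a prefix and a suffix of the column list, so the result is again a stair polygon, and each such column meets $Int(T)$ in $\emptyset$); (ii) then replace $x_0$ by $0$, $x_{r+1}$ by $1$, and $y_{r+1}$ by $0$ (legitimate after (i), since then the leftmost remaining column has positive right endpoint, the rightmost has left endpoint $<1$, and the top of the last column is $>0$). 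After this $0=x_0<x_1<\cdots<x_{r+1}=1$ and $y_0>y_1>\cdots>y_r>y_{r+1}=0$; and since every point of $Int(T)$ lies in the single column whose first-coordinate range contains it, the condition $Int(T)\subseteq S$ becomes exactly $y_0\ge1$ and $y_i\ge1-x_i$ for $1\le i\le r$ (let the first coordinate tend to $x_i$ from the right inside column $i$, and to $0$ from the right inside column $0$).

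With the normalization, $|S|=\sum_{i=0}^r(x_{i+1}-x_i)y_i\ge\sum_{i=0}^r(x_{i+1}-x_i)(1-x_i)$ by the covering constraints. Writing $d_i=x_{i+1}-x_i$, so $\sum_{i=0}^r d_i=1$ and $1-x_i=\sum_{k\ge i}d_k$, one gets $\sum_{i=0}^r(x_{i+1}-x_i)(1-x_i)=\sum_{0\le i\le k\le r}d_id_k=\frac{1}{2}\left(1+\sum_{i=0}^r d_i^2\right)$, while Cauchy--Schwarz gives $\sum_{i=0}^r d_i^2\ge\frac{1}{r+1}\ge\frac{1}{2j}$ (as $r+1\le2j$); hence $|S|\ge\frac{1}{2}+\frac{1}{4j}=\frac{2j+1}{4j}$. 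Because $|S|=\frac{2j+1}{4j}$ by hypothesis, every inequality above is an equality: no column was deleted, $x_0=0$, $x_{r+1}=1$, $y_{r+1}=0$, $y_i=1-x_i$ for all $i$, and the Cauchy--Schwarz equality forces $r=2j-1$ and $d_i=\frac{1}{2j}$ for all $i$, that is $x_i=\frac{i}{2j}$ and $y_i=\frac{2j-i}{2j}$. This says precisely $S=\frac{1}{2j}S(j)$.

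The algebraic core --- the identity $\sum_{i\le k}d_id_k=\frac12(1+\sum_i d_i^2)$ and the application of Cauchy--Schwarz --- is routine. The delicate part is the normalization: one must check that each deletion and each coordinate slide genuinely preserves both ``$Int(T)\subseteq S$'' and ``is a half open stair polygon'', and that deletions strictly decrease the area while slides only weakly decrease it, so that the assumed minimality of $|S|$ forces the \emph{original} $S$ to equal $\frac{1}{2j}S(j)$.
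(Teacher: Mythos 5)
Your proof is correct. The paper itself states this lemma without proof (it is folded into the remark that $A^j_*=\frac{2j+1}{4j}$ follows ``by elementary calculations''), so there is no argument to compare against; your normalization of an arbitrary $S\in\mathcal{S}^j_*$ to one with $x_0=0$, $x_{r+1}=1$, $y_{r+1}=0$, followed by the identity $\sum_{i\le k}d_id_k=\tfrac12\bigl(1+\sum_i d_i^2\bigr)$ and Cauchy--Schwarz with equality forcing $r=2j-1$ and $d_i=\tfrac{1}{2j}$, is a complete and correct way to supply the missing details, including the uniqueness of the minimizer.
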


From the definitions, we obviously have $\mathcal{S}_j\subset\mathcal{S}_j^*$ and $\mathcal{S}^j\subset\mathcal{S}^j_*$. Hence
 $A_j\leq A_j^*$ and $A^j\geq A^j_*$.
By Theorem \ref{optimal_lattice_of_S_j}, we know
that $\frac{1}{2j+1}S(j)$ and $\frac{1}{2j}S(j)$ are also exact $j$-fold tiles. Therefore, from  Lemma \ref{A_j_up_no_need_tile} and Lemma \ref{A_j_down_no_need_tile}, we obtain
\begin{equation}\label{area_A_j_down_star}
A_j=A_j^*=\frac{j}{2j+1},
\end{equation}
and
\begin{equation}\label{area_A_j_up_star}
A^j=A^j_*=\frac{2j+1}{4j}.
\end{equation}
Moreover, we have the following lemmas.

\begin{lem}\label{A_j_down_tile}
Suppose that $S\in\mathcal{S}_j$. We have that $|S|=\frac{j}{2j+1}$ if and only if $S=\frac{1}{2j+1} S(j)$.
\end{lem}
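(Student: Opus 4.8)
The plan is to obtain this as an immediate consequence of the ``unconstrained'' statement Lemma~\ref{A_j_down_no_need_tile} together with Theorem~\ref{optimal_lattice_of_S_j}, the point being that the only extra requirement built into $\mathcal{S}_j$ compared with $\mathcal{S}_j^*$ is that $S$ be an exact $j$-fold tile, and the extremal polygon identified in Lemma~\ref{A_j_down_no_need_tile} already has this property.

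For the ``only if'' direction I would argue as follows. Suppose $S\in\mathcal{S}_j$ with $|S|=\frac{j}{2j+1}$. Since $\mathcal{S}_j\subset\mathcal{S}_j^*$ by definition, $S$ is a half open $r$-stair polygon contained in $T$ with $r\le 2j-1$ and area $\frac{j}{2j+1}=A_j^*$. Lemma~\ref{A_j_down_no_need_tile} then forces $S=\frac{1}{2j+1}S(j)$, with nothing further to verify.

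For the ``if'' direction I would take $S=\frac{1}{2j+1}S(j)$ and check the two conclusions. The area is a one-line computation: $|S(j)|=\sum_{i=0}^{2j-1}(2j-i)=j(2j+1)$, hence $|S|=(2j+1)^{-2}|S(j)|=\frac{j}{2j+1}$. To see $S\in\mathcal{S}_j$, note first that $S(j)$ is by construction a half open $(2j-1)$-stair polygon and that scaling by a positive factor preserves both the stair shape and the number of steps, so $S$ is a half open $r$-stair polygon with $r=2j-1\le 2j-1$. Next, every point $(x,y)$ of the $i$-th block of $S(j)$ satisfies $x<i+1$ and $y<2j-i$, hence $x,y\ge 0$ and $x+y<2j+1$; therefore $S=\frac{1}{2j+1}S(j)\subset\{(x,y):x,y\ge 0,\ x+y<1\}\subset T$. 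Finally $S(j)$ is an exact $j$-fold tile by Theorem~\ref{optimal_lattice_of_S_j}: if $S(j)+\Lambda$ is an exact $j$-fold lattice tiling of $\mathbb{R}^2$ then $\frac{1}{2j+1}S(j)+\frac{1}{2j+1}\Lambda$ is one as well, so $S$ is an exact $j$-fold tile. Hence $S\in\mathcal{S}_j$.

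The argument is essentially bookkeeping, and I expect no genuine obstacle. The only steps worth a moment's care are the containment $\frac{1}{2j+1}S(j)\subset T$ — which is the inequality $x+y<2j+1$ on $S(j)$ checked above, and is slightly stronger than the trivial observation $S(j)\subset(2j+1)T$ — and the remark that an affine rescaling of an exact $j$-fold lattice tiling is again one.
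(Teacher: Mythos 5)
Your proof is correct and follows exactly the route the paper intends (the paper states this lemma without proof, but the surrounding derivation makes clear it is meant to follow from Lemma~\ref{A_j_down_no_need_tile} together with the fact, from Theorem~\ref{optimal_lattice_of_S_j}, that $\frac{1}{2j+1}S(j)$ is an exact $j$-fold tile). Your explicit checks of the area, the containment $\frac{1}{2j+1}S(j)\subset T$, and the invariance of exact $j$-fold lattice tilings under scaling are all sound.
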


\begin{lem}\label{A_j_up_tile}
Suppose that $S\in\mathcal{S}^j$. We have that $|S|=\frac{2j+1}{4j}$ if and only if $S=\frac{1}{2j} S(j)$.
\end{lem}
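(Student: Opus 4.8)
The plan is to obtain this lemma as an immediate combination of Lemma \ref{A_j_up_no_need_tile} and Theorem \ref{optimal_lattice_of_S_j}, exploiting the fact that $\mathcal{S}^j$ is precisely the subfamily of $\mathcal{S}^j_*$ consisting of those stair polygons that are moreover exact $j$-fold tiles; so a characterization of the area-minimizers over the larger family $\mathcal{S}^j_*$ transfers to $\mathcal{S}^j$ as soon as the minimizer is known to be a tile.

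For the forward implication, suppose $S\in\mathcal{S}^j$ with $|S|=\frac{2j+1}{4j}$. Since $\mathcal{S}^j\subset\mathcal{S}^j_*$ and $\frac{2j+1}{4j}=A^j_*$, Lemma \ref{A_j_up_no_need_tile} applies directly and yields $S=\frac{1}{2j}S(j)$. For the converse, suppose $S=\frac{1}{2j}S(j)$. First I would record the area: $|S(j)|=\sum_{i=0}^{2j-1}(2j-i)=\sum_{k=1}^{2j}k=j(2j+1)$, so scaling by $1/(2j)$ gives $|S|=\frac{j(2j+1)}{(2j)^2}=\frac{2j+1}{4j}$. Then I must confirm $S\in\mathcal{S}^j$. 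That $S$ is a half open $(2j-1)$-stair polygon is read off directly from the definition of $S(j)$ (the breakpoints $0<1<\cdots<2j$ and heights $2j>\cdots>1>0$ scale to an admissible stair polygon with $r=2j-1$). The inclusion $Int(T)\subset S$ follows by a short point-chase: for $(x,y)$ with $x,y>0$ and $x+y<1$, set $i=\lfloor 2jx\rfloor\in\{0,\dots,2j-1\}$; then $x\in[\frac{i}{2j},\frac{i+1}{2j})$ and $y<1-x\le 1-\frac{i}{2j}=\frac{2j-i}{2j}$, hence $(x,y)\in[\frac{i}{2j},\frac{i+1}{2j})\times[0,\frac{2j-i}{2j})\subset S$. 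Finally, $S$ is an exact $j$-fold tile: by Theorem \ref{optimal_lattice_of_S_j}, $S(j)+\Lambda(1,j)$ is an exact $j$-fold lattice tiling of $\mathbb{R}^2$, and applying the homothety $x\mapsto x/(2j)$ shows $\frac{1}{2j}S(j)+\frac{1}{2j}\Lambda(1,j)$ is one as well.

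I do not expect a genuine obstacle here: the substantive content is already carried by Lemma \ref{A_j_up_no_need_tile} (area-minimality of $\frac{1}{2j}S(j)$ among all $(2j-1)$-stair polygons containing $Int(T)$) and by Theorem \ref{optimal_lattice_of_S_j} (which guarantees this extremal polygon actually tiles, hence lies in $\mathcal{S}^j$ and not merely in $\mathcal{S}^j_*$). The only points needing elementary care are the containment $Int(T)\subset\frac{1}{2j}S(j)$ and the observation that admitting an exact $j$-fold lattice tiling is invariant under homothety. The same scheme — with $2j$ replaced by $2j+1$ and "$Int(T)\subset S$" replaced by "$S\subset T$", and using $|S(j)|/(2j+1)^2=\frac{j}{2j+1}$ — also proves the companion Lemma \ref{A_j_down_tile}.
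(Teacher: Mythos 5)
Your proof is correct and follows exactly the route the paper intends: the paper states Lemma \ref{A_j_up_tile} without an explicit proof, but the surrounding text (establishing $A^j=A^j_*$ from Lemma \ref{A_j_up_no_need_tile} together with the fact, via Theorem \ref{optimal_lattice_of_S_j}, that $\frac{1}{2j}S(j)$ is an exact $j$-fold tile) is precisely the argument you spell out. Your added details --- the area computation, the containment $Int(T)\subset\frac{1}{2j}S(j)$, and the homothety invariance of being an exact $j$-fold tile (using $m=1$, valid since $2j+1$ is odd) --- are all sound.
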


From (\ref{j_fold_theta_and_area}), (\ref{j_fold_delta_and_area}), (\ref{area_A_j_down_star}) and (\ref{area_A_j_up_star}), one can obtain Theorem \ref{main1}. We now suppose that $T+\Lambda$ is a $j$-fold lattice covering of $\mathbb{R}^2$. By the definition and properties of $S_j(\Lambda)$, it is clear that $S_j(\Lambda)\in \mathcal{S}_j$ and the density of $T+\Lambda$ is $\frac{|T|}{d(\Lambda)}=\frac{j|T|}{|S_j(\Lambda)|}$. By Lemma \ref{A_j_down_tile}, we have that the density of $T+\Lambda$ is equal to $\vartheta_L^j(T)=\frac{j|T|}{A_j}$ if and only if $S_j(\Lambda)=\frac{1}{2j+1}S(j)$. Note that $S_j(\Lambda)+\Lambda$ is an exact $j$-fold lattice tiling of $\mathbb{R}^2$. This implies that the density of $T+\Lambda$ is equal to $\vartheta_L^j(T)$ if and only if $\frac{1}{2j+1}S(j)+\Lambda$ is an exact $j$-fold lattice tiling of $\mathbb{R}^2$. From this and Theorem \ref{optimal_lattice_of_S_j}, one can obtain Theorem \ref{main_theta}.

 Suppose that $T+\Lambda$ is a $j$-fold lattice packing of $\mathbb{R}^2$. By the definition and properties of $S_j(\Lambda)$, we have that $S_j(\Lambda)\in \mathcal{S}^j$ and the density of $T+\Lambda$ is $\frac{|T|}{d(\Lambda)}=\frac{j|T|}{|S_j(\Lambda)|}$. By Lemma \ref{A_j_up_tile}, we know that the density of $T+\Lambda$ is equal to $\delta_L^j(T)=\frac{j|T|}{A^j}$ if and only if $S_j(\Lambda)=\frac{1}{2j}S(j)$. This can be deduced that the density of $T+\Lambda$ is equal to $\delta_L^j(T)$ if and only if $\frac{1}{2j}S(j)+\Lambda$ is an exact $j$-fold lattice tiling of $\mathbb{R}^2$. From this and Theorem \ref{optimal_lattice_of_S_j}, one can obtain Theorem \ref{main_delta}. 
 
 Finally, one can easily show that when $m,n\in\{1,2,\ldots,2j+1\}$ and $m\neq n$, we have $\Lambda(m,j)\neq \Lambda(n,j)$. Hence, Corollary \ref{cor_number_of_optimal_lattice} directly follows from Theorem \ref{main_delta}, Theorem \ref{main_theta} and Theorem \ref{euler_k_phi_function}.
\section*{Acknowledgment}
This work is supported by 973 programs 2013CB834201 and 2011CB302401.
Furthermore I would like to express my special thanks of gratitude to Prof. Chuanming Zong who gave me the golden opportunity and valuable suggestions on the topic.
Secondly i would also like to thank my friend Akanat  Wetayawanich who helped me approve this paper.

\newpage


\begin{thebibliography}{10}

\bibitem{dumir1}
Dumir, V.C., Hans-Gill, R.J.: Lattice Double Covering in the Plane. Indian J. Pure Appl. Math. \textbf{3}, 466--480 (1972)

\bibitem{dumir2}
Dumir, V.C., Hans-Gill, R.J.: Lattice Double Packings in the Plane. Indian J. Pure Appl. Math. \textbf{3}, 481--487 (1972)

\bibitem{fejes}
Fejes T\'{o}th, G.: Multiple Lattice Packings of Symmetric Convex Domains in the Plane. J. London Math. Soc. \textbf{29}, 556--561 (1984)

\bibitem{gabor}
Tardos, G., T\'{o}th, G.: Multiple Coverings of the Plane with Triangles. Discrete Comput. Geom. \textbf{38}, 443--450 (2007)

\bibitem{brass_moser_pach}
Brass, P., Moser, W., Pach, J.: Research Problems in Discrete Geometry. Springer, New York, pp. 75--80 (2005)

\bibitem{chuanming}
Chuanming Zong: Packing, Covering and Tiling in Two-Dimension Spaces, Expostiones Mathematicae \textbf{32}, 297--364 (2013)
\end{thebibliography}
\end{document}